\newtheorem{theorem}{Theorem}[section]
\newtheorem{proposition}[theorem]{Proposition}
\newtheorem{corollary}[theorem]{Corollary}
\newtheorem{lemma}[theorem]{Lemma}
\theoremstyle{remark}
\newtheorem{remark}[theorem]{Remark}
\theoremstyle{definition}
\newtheorem{definition}[theorem]{Definition}
\title[EXOTIC: Exact solutions for min-max optimization]{EXOTIC: An Exact, Optimistic, Tree-Based Algorithm for Min-Max Optimization}
\author[C. Maheshwari]{Chinmay Maheshwari}
\author[C. Pimpalkhare]{Chinmay Pimpalkhare}
\author[D. Chatterjee]{Debasish Chatterjee}
\thanks{C.\ M.\ is with the Department of Electrical and Computer Engineering, and Data Science and AI
Institute at Johns Hopkins University, Baltimore, MD, USA (Email: chinmay\_maheshwari@jhu.edu). C.\ P.\ is with the Institute for Computational and Mathematical Engineering at Stanford University, Stanford, CA, USA (Email: cpimpalk@stanford.edu). D.\ C.\ is with the Center for Systems and Control, IIT Bombay, Mumbai, MH, India (Email: dchatter@iitb.ac.in)}
\keywords{Minimax Optimization, Tree-based Methods, Security Strategy.}
\subjclass{90C26, 68Q32, 91A06}
\begin{document}

\begin{abstract} 
Min–max optimization arise in many domains such as game theory, adversarial machine learning, robust optimization, control, and signal processing. In convex–concave min-max optimization, gradient-based methods are well understood and enjoy strong guarantees. However, in absence of convexity or concavity, existing approaches study convergence to an approximate saddle point or first-order stationary points, which may be arbitrarily far from global optima.

In this work, we present an algorithmic framework for computing the {global minimax value} in convex--non-concave and non-convex--concave min--max optimization.
For convex--non-concave min--max problems, we use a reformulation that transforms the problem into a non-concave--convex max--min optimization problem with suitably defined feasible sets and objective function. This reformulation can be viewed as an extension of the Sion's minimax theorem to the convex--non-concave setting.
We then introduce EXOTIC --- an Exact, Optimistic, Tree-based algorithm for solving the reformulated max--min problem. EXOTIC combines an iterative convex optimization solver for the inner minimization with an optimistic hierarchical tree search for the outer maximization, inspired by StroquOOL~\cite{bartlett2019simple}. Unlike StroquOOL, which assumes stochastic zero-mean noisy evaluations, EXOTIC handles deterministic, biased, and budget-dependent evaluation errors arising from finite-time solutions of the inner convex subproblems.
We establish an upper bound on its optimality gap as a function of the number of calls to the inner solver, the solver's convergence rate, and additional problem-dependent parameters. The same algorithmic framework and theoretical analysis also apply to non-convex--concave min--max optimization.

In addition, we propose a class of benchmark convex–non-concave min–max problems along with their analytical global solutions, providing a testbed for evaluating algorithms for min-max optimization. Empirically, EXOTIC outperforms gradient-based methods on this benchmark as well as on existing numerical benchmark problems from the literature. Finally, we demonstrate the utility of EXOTIC by computing security strategies in multi-player games with three or more players--a computationally challenging task that, to our knowledge, no prior method solves exactly.
\end{abstract}

\maketitle

\section{Introduction}
Min-max  optimization problems of the form 
\begin{align*}
    \min_{\mbf{x}\in \xSet}\max_{\mbf{y}\in \ySet} \obj(\mbf{x},\mbf{y})
\end{align*}
arise in diverse domains, including game theory~\cite{narahari2014game, guo2025markov}, adversarial machine learning~\cite{goodfellow2014generative, madry2018towards}, robust optimization~\cite{ben2009robust}, and control~\cite{isaacs1999differential}.
A predominant approach for solving such problems has been the use of gradient-based algorithms. Several works have established convergence guarantees of these algorithms under various assumptions on the function \(f(\cdot).\) For instance, if the function \(f(\cdot)\) is convex-concave, then \cite{korpelevich1976extragradient, nemirovski2004prox, tseng1995linear} showed asymptotic convergence to the optimal solution. 
Several works, starting from~\cite{nemirovski2004prox}, have derived non-asymptotic convergence guarantees for gradient-based methods~\cite{abernethy2019last}. For strongly-convex--strongly-concave min-max problems, lower bounds on iteration complexity have also been established~\cite{zhang2019lower, mokhtari2020unified, mokhtari2020convergence, liang2019interaction}. Near-optimal algorithms have been developed for strongly convex--strongly concave min-max optimization problems~\cite{lin2020near, wang2020improved}.
Beyond convex-concave regime, significant attention has been given to non-convex--non-concave min-max problems, which naturally arise in deep learning and adversarial training. Several works have analyzed the convergence of gradient-based algorithms in these challenging settings, including non-convex--concave \cite{rafique2022weakly, lin2020gradient, thekumparampil2019efficient, ostrovskii2021efficient}, convex--non-concave \cite{xu2023unified}, and non-convex--non-concave problems \cite{lin2020near, jin2020local, yang2020global, diakonikolas2021efficient, ostrovskii2021nonconvex, liu2021first, cai2022accelerated, zheng2023universal, cohen2025alternating}. A common metric to evaluate the performance has been to establish convergence to one of the following sets \cite{razaviyayn2020nonconvex}: 
\begin{enumerate}[labelwidth=*,align=left, widest=iii, leftmargin=*]
    \item[\textsf{(M1)}] the set of (approximate) stationary point of function \(\Phi(x) = \max_{y\in \ySet}\obj(x, y)\) \cite{lin2025two, rafique2022weakly, thekumparampil2019efficient, jin2020local};  
    \item[\textsf{(M2)}] a first order saddle point of \(\obj\) \cite{xu2023unified, nouiehed2019solving,agarwal2025framework, fiez2021local}.
\end{enumerate}
While these preceding two metrics are amenable to theoretical analysis, the resulting solutions may be far from the optimal {(see numerical evaluations in Section \ref{sec: Numerics})}.

In addition to gradient-based algorithms, there has been growing interest in gradient-free methods, motivated by settings where access to gradients is costly or unavailable. These methods typically use zeroth-order oracles to estimate gradients, which are then fed into gradient-based algorithms~\cite{maheshwari2022zeroth, beznosikov2020gradient, liu2020min, xu2020gradient}. Consequently, such methods inherit the challenges of gradient-based algorithms that focus on the optimality metrics~\textsf{(M1)}--\textsf{(M2)} discussed above.  

In this article, we develop an algorithmic framework that guarantees global minimax values for both convex--non-concave and non-convex–concave min–max optimization problems. While our primary focus is on the convex--non-concave setting, the algorithm and its accompanying analysis extend naturally to the non-convex--concave min-max optimization (see Remark~\ref{rem: NCCMM}).
The key advancements in this article are five fold:
\begin{enumerate}[labelwidth=*,align=left, widest=iii, leftmargin=*]

\item[(A)] Our \emph{first contribution} is a technical result concerning an equivalent reformulation of the convex--non-concave min-max problem as a \emph{non-concave--convex max-min} problem with modified feasible sets and an altered objective. Specifically, for the convex--non-concave problems, we show that \(
\min_{\mbf{x}\in \xSet}\max_{\mbf{y}\in \ySet} \obj(\mbf{x}, \mbf{y}) = \max_{\mbf{w}\in \setSOO} \outerObj(\mbf{w}),\)
where the set \(\setSOO\) is a set constructed from \(\ySet\) (to be defined below), {the function \(\setSOO \ni \mbf{w}\mapsto\outerObj(\mbf{w})\) is defined as \(
\outerObj(\mbf{w}) = \min_{{\mbf{x}}\in X} F({\mbf{x}}, \mbf{w}),\)
with \(X\ni x \mapsto F(\cdot, \mbf{w})\) is a convex function, for every \(\mbf{w}\in W\), constructed using \(f(\cdot)\) (see Proposition~\ref{prop: ProblemReformulate} for formal statement).} To derive this result, we first cast the original min-max problem as an equivalent \emph{convex semi-infinite program}, which is then transformed into a \emph{non-concave--convex max-min} problem using recent advances in semi-infinite programming~\cite{das2022near, ref:ParCha-23}.
{ Additionally, we provide sufficient conditions which allow us to recover stationary points and optimal solutions of the original problem and the reformulated problem (see Proposition \ref{prop:solution_and_stationary_correspondence}). In the special case where $\obj(\mbf{x},\mbf{y})$ is convex--concave, our reformulation recovers the classical minimax equality of Sion~\cite{sion1958general} {(see Proposition \ref{prop:SionMinimax})}. Thus, our result can be viewed as extending Sion's minimax theorem to convex--non-concave settings through a reformulation that modifies both the feasible set and the objective.
}

\item[(B)]
Our \emph{second contribution} is an algorithmic reduction of the reformulated minimax problem to a hierarchical optimistic search problem with an optimization-based evaluation oracle. Specifically, after reformulation, solving the original convex--non-concave minimax problem reduces to solving \(
\max_{\mbf{w}\in\setSOO} \outerObj(\mbf{w}),\)
where each evaluation of \(\outerObj(\mbf{w})\) requires solving an inner convex optimization problem. 

{To solve this problem, EXOTIC uses the hierarchical partitioning and optimistic tree-search template of StroquOOL~\cite{bartlett2019simple}. The key difference is the oracle model: StroquOOL assumes stochastic, unbiased function evaluations, whereas our evaluations are produced by a finite-time convex optimization solver and therefore have deterministic, biased, and budget-dependent errors.}

\item[(C)]
{Our \emph{third contribution} is a convergence analysis of EXOTIC under optimization-induced oracle errors; see Theorems~\ref{thm: MainConvexResult} and~\ref{thm: MainStronglyConvexResult}. 
While the outer search procedure follows the StroquOOL template \cite{bartlett2019simple}, the existing StroquOOL guarantees do not directly apply because their analysis relies on stochastic, unbiased evaluations and concentration bounds. 
In contrast, our evaluations are generated by a finite-time convex optimization solver, so the evaluation error is deterministic, biased, and explicitly dependent on the computational budget allocated to the inner solver.

We therefore extend the StroquOOL analysis for EXOTIC that couples the outer optimistic tree-search procedure with the convergence behavior of the inner solver. 
In particular, we track how the number of solver iterations allocated to each node controls the accuracy of its estimated value, and how these budget-dependent errors affect the optimistic selection and expansion rules. 
This yields optimality-gap guarantees that depend jointly on the near-optimality dimension of the outer objective and the convergence rate of the inner solver.
Specifically, when the iterative convex optimization solver converges at a polynomial rate, the optimality gap decays as \(
\tilde{\mathcal{O}}\big(n^{-1/(d + 1/\decayCpgd)}\big),\)
where \(n\) is the computational budget, \(d\) is the near-optimality dimension of the outer optimization problem (Definition~\ref{def: NearOptDim}), and \(\decayCpgd\) is the convergence-rate parameter of the inner solver; see Theorem~\ref{thm: MainConvexResult} and Corollary~\ref{cor: MainConvexResult}. 
When the inner solver converges at an exponential rate, the optimality gap improves to \(
\tilde{\mathcal{O}}(n^{-1/d});\)
see Theorem~\ref{thm: MainStronglyConvexResult} and Corollary~\ref{cor: MainStronglyConvexResult}.

These rates depend on the near-optimality dimension rather than the ambient dimension of \(\setSOO\). 
This is important because a uniform \(\epsilon\)-net discretization over a \(D\)-dimensional ambient space would require on the order of \(\epsilon^{-D}\) evaluations, whereas the tree-based procedure adapts to the smaller near-optimality dimension when the high-value region has favorable structure. 
Thus, when \(d \ll D\), EXOTIC can achieve substantially better complexity than uniform discretization.}

Unlike prior methods that target approximate stationarity in the sense of metrics \textsf{(M1)--(M2)}, our algorithmic apparatus provides, to the best of our knowledge, the first deterministic method for recovering the {globally minimax value} of convex--non-concave minimax problems under the proposed reformulation. 
Both the algorithmic apparatus and the accompanying theoretical analysis also extend to non-convex--concave minimax optimization; see Remark~\ref{rem: NCCMM}.

\item[(D)] Our \emph{fourth contribution} is a class of convex--non-concave min-max problems that have closed-form solutions (Proposition \ref{prop: HandCraftExample}). This provides a new benchmark for evaluating performance of min-max algorithms. We demonstrate that state-of-the-art gradient-based methods fail to find the {global minimax value on this benchmark and other numerical benchmarks from \texttt{SIPAMPL} database \cite{vaz_sipampl}}, while EXOTIC consistently outperforms them. 

\item[(E)] Our \emph{fifth contribution} is an application: we demonstrate that our method can compute the \emph{security value} in multi-player games with three or more players. This is an important problem in strategic decision-making providing the best worst-case cost experienced by any player in a game. This is a computationally challenging problem as it is a convex--non-concave min-max problem, and to the best of our knowledge, EXOTIC is the first algorithm that finds the security value exactly.

\end{enumerate}

\paragraph{Notations} We denote the set of positive integers by \(\N,\) and the set of real numbers by \(\R.\)
For any natural number \(n\in \N,\) we define \([n] :=\{0, 1, 2, 3,.., n\}.\) For any \(n,m\in \N,\) such that \(n\leq m,\) we define \([n:m]:=\{n, n+1, ..., m\}.\) {For any \(a,b\in \mathbb{R}\), we use the notation \(]a, b[\) to denote a open set \(\{x\in \mathbb{R}: a < x < b\}\). }

\section{Reformulation of Convex Non-concave Minimax Optimization}\label{sec: Min-maxReformulate}
Consider the following optimization problem:
\begin{equation}
\min_{\mbf{x} \in \xSet}\max_{\mbf{y} \in \ySet}f(\mbf{x}, \mbf{y})
\label{eq:min_max}    
\end{equation}
such that 
\begin{enumerate}[labelwidth=*,align=left, widest=iii, leftmargin=*]
    \item[(i)] $\xSet \subset \mathbb{R}^{\xSetDim}$ is a closed and convex set; and  $\ySet\subset \mathbb{R}^{\ySetDim}$ is a compact set;
    \item[(ii)] the function \(f:\R^{\xSetDim}\times \R^{\ySetDim}\rightarrow\R\) is continuous, and for every \(\mbf{y}\in \ySet,\) the function \(\R^{\xSetDim}\ni \mbf{x}\mapsto\obj(\mbf{x}, \mbf{y}) \in \R\) is coercive;  
    \item[(iii)] for every \(\mbf{y}\in \ySet,\) the function \(\R^{\xSetDim} \ni \mbf{x} \mapsto f(\mbf{x}, \mbf{y})\in \mathbb{R}\) is convex.
\end{enumerate}

Note that the set $\ySet$ is not required to be convex, and the mapping $\ySet\ni \mbf{y} \mapsto f(\mbf{x}, \mbf{y})\in \R$ for any $\mbf{x} \in \xSet$ need not be concave. 
It is known that \eqref{eq:min_max} admits a solution \cite{jin2020local}.
Numerically computing the solution of \eqref{eq:min_max} is challenging as the inner maximization problem is non-concave. At a high level, our approach to addressing this challenge is to reformulate the original min-max problem into a max-min problem over an appropriately defined space (see Proposition~\ref{prop: ProblemReformulate}), in which the inner minimization becomes convex. In Section~\ref{sec: Algorithm}, we present an algorithmic method that exactly solves the reformulated problem.


{\begin{proposition}\label{prop: ProblemReformulate}
The optimal value of \eqref{eq:min_max} is identical to the value of the following optimization problem:
\begin{equation}
\label{eq:CSIP_to_max_min}
\max_{\mbf{w}\in \setSOO} \left(\outerObj(\mbf{w}):= \min_{\mbf{x} \in X} F({\mbf{x}}, \mbf{w})\right), \quad \text{where} 
\end{equation}
the set \(\setSOO := \ySet^{\xSetDim+1}\); for any \(\mbf{w} = [\mbf{y}_1^\top, \mbf{y}_2^\top, \dots, \mbf{y}_{\xSetDim+1}^\top]^\top \in W, \) the mapping 
\begin{align}\label{eq: g_equation}
   X\ni \mbf{x}\mapsto F(\mbf{x}, \mbf{w}) := \max_{i\in [d_x+1]}f(\mbf{x},\mbf{y}_i).
\end{align}
\end{proposition}
}
A proof of Proposition \ref{prop: ProblemReformulate} is provided in Appendix \ref{app: ProofMinmaxToMaxmin}.  To derive this result, we first cast the original min-max problem as an equivalent \emph{convex semi-infinite program}, which is then transformed into a \emph{non-concave--convex max-min} problem using recent advances in semi-infinite programming~\cite{das2022near, ref:ParCha-23}. 

\begin{remark}
The problem in \eqref{eq:CSIP_to_max_min} is a non-concave--convex max-min optimization. 
{Indeed, for any \(\mbf{w}=(\mbf{y}_i)_{i\in[d_x+1]}\in W,\) the function \(X\ni\mbf{x}\mapsto F(\mbf{x},\mbf{w})\) is a point-wise maximum of convex functions, and hence convex. } 
\end{remark}

{{}
\begin{remark}
Proposition~\ref{prop: ProblemReformulate} establishes equivalence of the optimal values of \eqref{eq:min_max} and \eqref{eq:CSIP_to_max_min}. This equivalence, however, should not be interpreted as a one-to-one correspondence between the optimizer sets, or between the stationary points, of the two problems. To see the issue, let \(\mbf{w}^\ast\in \arg\max_{\mbf{w}\in W}\min_{\mbf{x}\in X}F(\mbf{x},\mbf{w})\), and let \(\mbf{x}^\ast\in \arg\min_{\mbf{x}\in X}F(\mbf{x},\mbf{w}^\ast)\). Although \(\mbf{w}^\ast\) is globally optimal for the reformulated problem, the induced minimizer \(\mbf{x}^\ast\) need not, in general, belong to \(\arg\min_{\mbf{x}\in X}\max_{\mbf{y}\in Y}f(\mbf{x},\mbf{y})\).\footnote{{{}For example, let \(X=[0,1]\), \(Y=[0,1]\), and \(f(\mbf{x},\mbf{y})=\mbf{xy}\). Then \(\max_{\mbf{y}\in Y}f(\mbf{x}, \mbf{y})=\mbf{x}\), so the original minimax problem has the unique solution \(\bar{\mbf{x}}=0\). In this example, \(F(\mbf{x},\mbf{w})=\mbf{x}\max\{\mbf{y}_1,\mbf{y}_2\}\). Hence \(\min_{\mbf{x}\in X}F(\mbf{x, w})=0\) for every \(\mbf{w}\in Y^2\). Pick \(\mbf{w}^\ast=(0,0)\) and \(\mbf{x}^\ast=1/2\). But note that \(\mbf{x}^\ast=1/2\) is not optimal for the original minimax problem.}} Thus, optimality of the tuple \(\mbf{w}^\ast\) does not by itself guarantee recovery of a globally optimal solution of the original minimax problem.
The following result clarifies when such a recovery is valid. It also states the corresponding condition under which stationarity of the induced finite-max problem is equivalent to stationarity of the original minimax objective.
\end{remark}
\begin{proposition}
\label{prop:solution_and_stationary_correspondence}
Consider arbitrary \(\mbf{w}^\ast\in \arg\max_{\mbf{w}\in W} \min_{\mbf{x}\in X} F(\mbf{x},\mbf{w})\) and \(
    \mbf{x}^\ast\in
    \arg\min_{\mbf{x}\in X}F(\mbf{x},\mbf{w}^\ast).\) Define \(
    \Phi(\mbf{x}) := \max_{\mbf{y}\in Y} f(\mbf{x},\mbf{y})\). 
Then \(\mbf{x}^\ast\) is a global solution of the original minimax problem if either of the following conditions holds:
\begin{itemize}
    \item[(i)] \(\Phi(\cdot)\) and \(F(\cdot,\mbf{w}^\ast)\) have a common unique minimizer;\footnote{ in particular, this holds if both objectives are strictly convex on \(X\).}
    \item[(ii)] the finite tuple \(\mbf{w}^\ast\) is exact at \(\mbf{x}^\ast\), namely \(F(\mbf{x}^\ast,\mbf{w}^\ast)=\Phi(\mbf{x}^\ast).\)
\end{itemize}

Furthermore, suppose \(f(\cdot,\mbf{y})\) is continuously differentiable for every \(\mbf{y}\in Y\). If the finite tuple \(\mbf{w}^\ast\) is first-order exact at \(\mbf{x}^\ast\), namely \(    \partial_{\mbf{x}}F(\mbf{x}^\ast,\mbf{w}^\ast)
    =
    \partial \Phi(\mbf{x}^\ast),\)
then \(\mbf{x}^\ast\) is first-order stationary for \(
    \min_{\mbf{x}\in X}F(\mbf{x},\mbf{w}^\ast)\)
if and only if \(\mbf{x}^\ast\) is first-order stationary for \(    \min_{\mbf{x}\in X}\Phi(\mbf{x}).\)
\end{proposition}
A proof of Proposition \ref{prop:solution_and_stationary_correspondence} is provided in Appendix \ref{app: StationaryPoint}.
}

{{}
\begin{remark}\label{rem: SionGeneralization} Proposition~\ref{prop: ProblemReformulate} 
guarantees that for a convex--non-concave function \(
\min_{\mbf{x}\in X}\max_{\mbf{y}\in Y}f(\mbf{x}, \mbf{y}) = \max_{\mbf{w}\in Y^{d_x+1}}\min_{\mbf{x}\in X}F(\mbf{x},\mbf{w}).\) That is, we can convert a min-max problem to a max-min problem where the domain of maximization and the objective function are appropriately changed. This 
can be viewed as a generalization of Sion’s minimax theorem~\cite{sion1958general}. Recall that, Sion’s minimax theorem---a foundational result in game theory---guarantees that  
\(
\min_{\mbf{x} \in \xSet} \max_{\mbf{y} \in \ySet} f(\mbf{x}, \mbf{y}) = \max_{\mbf{y} \in \ySet} \min_{\mbf{x} \in \xSet} f(\mbf{x}, \mbf{y})
\)
for any continuous function \(f:\xSet\times\ySet\rightarrow \R\) that is convex--concave.  
Indeed, when \(f\) is convex-concave, we show that using Proposition \ref{prop: ProblemReformulate}, we can recover Sion's minimax theorem as shown below. 
\end{remark}
\begin{proposition}\label{prop:SionMinimax}
Let $\xSet \subset \mathbb{R}^{\xSetDim}$ and $\ySet \subset \mathbb{R}^{d_y}$ be nonempty, convex, and compact. Let $f : \xSet \times \ySet \to \mathbb{R}$ be continuous, convex in $\mbf{x}$ for every fixed $\mbf{y} \in \ySet$, and concave in $\mbf{y}$ for every fixed $\mbf{x} \in \xSet$. Then, using Proposition 2.1, we can show that  
\[
\min_{\mbf{x}\in \xSet}\max_{\mbf{y}\in \ySet} f(\mbf{x},\mbf{y})
=
\max_{\mbf{y}\in \ySet}\min_{\mbf{x}\in \xSet} f(\mbf{x},\mbf{y}).
\]
\end{proposition}
A proof of Proposition \ref{prop:SionMinimax} is provided in Appendix \ref{app: SionMinimax}.
}

\section{Algorithm}\label{sec: Algorithm}
{In this section, we introduce our approach to solve~\eqref{eq:min_max}. Recall from Proposition~\ref{prop: ProblemReformulate} that~\eqref{eq:min_max} can be reformulated as
\begin{align}\label{eq: SolveReformulation} 
\max_{\mbf{w} \in \setSOO} \outerObj(\mbf{w}), \quad \text{where} \quad 
\setSOO \ni \mbf{w} \mapsto \outerObj(\mbf{w}) := \min_{{\mbf{x}} \in X} F(\mbf{x},\mbf{w}).
\end{align}

Our algorithmic approach builds on the StroquOOL algorithm of \cite{bartlett2019simple}, a hierarchical optimistic optimization method designed for global maximization of non-convex functions under stochastic oracle feedback with zero-mean noise. 
Our approach preserves the core structural components of StroquOOL—namely hierarchical partitioning of the search space, optimistic node selection, and progressive budget allocation across depths. 

The key distinction lies in the oracle model. In our setting, the objective \(G(\mbf{w})\) is defined as \(
G(\mbf{w}) = \min_{\mbf{x}\in X} F(\mbf{x}, \mbf{w}),\) for which we do not have stochastic oracle. Instead, for any \(\mbf{w}\), we obtain an estimate of \(G(\mbf{w})\) by running an optimization solver \(\OPT\) for a finite number of iterations. Consequently, each evaluation is deterministic, biased, and depends explicitly on the computational budget and initialization. 

As a result, the distinction from StroquOOL is not in the tree-based search structure, but in the oracle model and its analysis. In StroquOOL, the objective evaluations are corrupted by stochastic, zero-mean noise, and estimation error is controlled via concentration arguments. In contrast, in our setting each evaluation is obtained via a finite-time optimization procedure, resulting in deterministic, biased, and iteration-dependent errors. Consequently, classical concentration-based arguments do not apply, and the estimation accuracy must instead be characterized through convergence guarantees of the optimization solver.
}

{{}\subsection{Optimization Solver \OPT}
As discussed above, we use an iterative convex optimization solver \OPT\ to evaluate \(\outerObj(\mbf{w})\), for any \(\mbf{w}\in W.\) Typically, these solvers only converge to optimality asymptotically. Therefore, after finitely many iterations of \OPT, we only have an approximate evaluation of \(\outerObj(\mbf{w})\). 
More concretely, 
the solver is denoted by \(\OPT: \setSOO \times \xSet \times \N \rightarrow \xSet\), which outputs an approximate solution \(\OPT(\mbf{w},{\mbf{z}}, \solIter)\) after \(\solIter\) iterations, initialized at \({\mbf{z}} \in \xSet\). We define the corresponding estimated value as \(
\underline{G}(\mbf{w}, {\mbf{z}}, \solIter) := F(\OPT(\mbf{w}, {\mbf{z}}, \solIter), \mbf{w}).\)

We characterize the accuracy of this oracle through the following assumptions, which will be used in the convergence analysis of the overall algorithm.}

\begin{assumption}\label{assm: OPT_Convergence_Convex}
The optimization procedure \(\textsf{OPT}\) is such that for every \(\mbf{w} \in \setSOO\), {{}\({\mbf{z}} \in X\), we have \(\OPT(\mbf{w}, {\mbf{z}}, \solIter)\in X\)} and
\[
    \underline{G}(\mbf{w}, {\mbf{z}}, \solIter) \leq G(\mbf{w}) + \Cpgd \, \solIter^{-\decayCpgd}, \quad \forall \solIter \in \mathbb{N},
\]
where \(\Cpgd > 0\) and \(\decayCpgd > 0\) characterize the sublinear convergence rate.
\end{assumption}

\begin{assumption}\label{assm: OPT_Convergence_StronglyConvex}
The optimization procedure \(\textsf{OPT}\) is such that for every \(\mbf{w} \in \setSOO\), {{}\({\mbf{z}} \in X\), we have \(\OPT(\mbf{w}, {\mbf{z}}, \solIter)\in X\)} and
\[
    \underline{G}(\mbf{w}, {\mbf{z}}, \solIter) \leq G(\mbf{w}) + \SCpgd \cdot \decaySCpgd^{\solIter}, \quad \forall \solIter \in \mathbb{N},
\]
where \(\SCpgd > 0\) and \(\decaySCpgd \in (0,1)\) characterize the linear convergence rate.
\end{assumption}

{{}In Appendix \ref{app:SufficientConditionsAssm12}, we provide sufficient conditions for Assumptions \ref{assm: OPT_Convergence_Convex}-\ref{assm: OPT_Convergence_StronglyConvex}.}

{{}\subsection{Algorithmic Apparatus}
The algorithm follows the hierarchical partitioning and optimistic node selection strategy of StroquOOL \cite{bartlett2019simple}, but adapts it to use the optimization-based oracle defined above, instead of stochastic oracle. We provide algorithmic description below for completeness.

We construct a hierarchical partition of \(\setSOO\) parameterized by \(\kTree\). At depth \(h\), the domain is partitioned into \(\kTree^h\) disjoint subsets \(\{\setSOO_{h,i}\}_{i\in[\kTree^h]}\), inducing a \(\kTree\)-ary tree \(\tree\), where each node \((h,i)\) corresponds to a subset \(\setSOO_{h,i}\). 
Each node \((h,i)\) is associated with: \textit{(i)} a representative point \(\varSOO_{h,i} \in \setSOO_{h,i}\); \textit{(ii)} an approximate value \(\estSOO_{h,i}\); \textit{(iii)} an initializer \(\optSOO_{h,i}\) for the solver \(\OPT\); \textit{(iv)} number of iterations of \OPT \ executed on that node, denoted by \(\num_{h,i}\).

The approximate value \(\estSOO_{h,i}\) is then computed as 
\[
\estSOO_{h,i} = F\big(\OPT(\varSOO_{h,i}, \optSOO_{h,i}, \num_{h,i}), \varSOO_{h,i}\big).
\]

\medskip
\noindent\textbf{Initialization.}
We initialize the tree with the root node and its \(\kTree\) children at depth \(h=1\). Each node \((1,i)\) is evaluated using \(\OPT\) for \(\hmax\) iterations.

\medskip
\noindent\textbf{Tree expansion.}
For each depth \(h \in [\hmax]\), and for each \(m \in [\lfloor \hmax/h \rfloor]\), we select a leaf node \((h,\bar{i})\) satisfying \textit{(a)} \(\num_{h,\bar{i}} \ge \lfloor \hmax/(hm)\rfloor\), and \textit{(b)} it maximizes \(\estSOO_{h,i}\) over all leaf nodes at that depth. We then expand this node by adding \(K\) child nodes and evaluate all child nodes using \OPT\ for \(\lfloor \hmax/(hm)\rfloor\) iterations.

\medskip
\noindent\textbf{Re-evaluation.}
Finally, a re-evaluation phase is performed because in the tree-expansion phase different nodes are evaluated with different levels of accuracy. For each 
\(p \in [\lfloor \log_2(h_{\max}) \rfloor]\), we identify the node with the largest value\footnote{Ties are broken arbitrarily.} of \(\estSOO_{h,i}\) among all nodes that have been evaluated by \(\OPT\) for at least \(2^p\) iterations. This gives at most \(\lfloor \log_2(h_{\max}) \rfloor\) candidate nodes. Each candidate node is then re-evaluated by running \(\OPT\) for a fixed budget of \(\lfloor h_{\max}/2 \rfloor\) iterations. The algorithm returns the node with the largest estimated value after the re-evaluation phase.
The complete procedure is given in Algorithm~\ref{alg: Algorithm_ComputingSol}.}

\begin{algorithm}[h!]
\caption{{\textbf{EXOTIC:} \textbf{EX}act, \textbf{O}ptimistic, \textbf{T}ree-based Algor\textbf{I}thm for \textbf{C}onvex--Non-concave (and Non-convex--Concave) {Minimax} Optimization }}
\begin{algorithmic}[1]
\STATE \textbf{Initialize:} 
\begin{itemize}
  \item The total number of iterations of the optimization solver to \(\iter\) and the maximum depth of the tree \(\hmax.\)
  \item For any \(i\in [\kTree^h], h\in [\hmax],\) set representative points \(\varSOO_{h,i}\in \setSOO_{h,i}\) and \(\optSOO_{h,i} \in \setmap(\varSOO_{h,i})\)
   \item Tree \(\tree\) with root node \(\{(0,1) = \varSOO_{0,1}, \estSOO_{0,1} \leftarrow 0, \optSOO_{0,1}\}\). 
  \item {{}Append the nodes \(\{(1,i) = (\varSOO_{1,i}, \estSOO_{1,i} \leftarrow F(\OPT(\varSOO_{1,i}, \optSOO_{h,i}, \hmax), \varSOO_{1,i}), \optSOO_{h,i})\}_{i \in [\kTree]}\) as children of \((0,1)\) in \(\tree\).}
  \item Set \(\{\num_{1,i} \leftarrow \hmax\}_{i \in [\kTree]}\).
\end{itemize}

{{}\%\textsf{Tree-expansion phase}}
\FOR{\(h = 1\) to \(\hmax\)}
    \FOR{\(m = 1\) to \(\lfloor \hmax / h \rfloor\)}
        \STATE Select a leaf node \((h, \bar{i})\) such that  \(
            \bar{i} \in \underset{(h,i)\in \tree_{\text{leaf}} ~ \text{s.t.} ~  \num_{h,i} \geq \lfloor \hmax/(hm)\rfloor} {\arg\max} ~ \estSOO_{h, i},
        \)
        where \(\tree_{\text{leaf}}\) denotes the set of leaf nodes of \(\tree\)
        \STATE Update \(\tree\) by adding \(\kTree\) child nodes \(\{ (h+1, j) = (\varSOO_{h+1, j}, \estSOO_{h+1, j}\leftarrow 0, \optSOO_{h+1, j})\}_{j\in [\kTree]}\) to the node \((h,\bar{i})\)
        \FOR{\(j = 1\) to \(j = \kTree\)}  
            \STATE {{}Set \(\estSOO_{h+1, j} \leftarrow F(\OPT(\varSOO_{h+1,j},  \optSOO_{h+1,j}, \lfloor \hmax/(hm) \rfloor),\varSOO_{h+1,j} )\)}
            \STATE Set \(\num_{h+1,j} \leftarrow \lfloor \hmax/(hm) \rfloor \)
        \ENDFOR
    \ENDFOR
\ENDFOR

{{}\%\textsf{Re-evaluation phase}}

\FOR{\(p = 0\) to \(\lfloor \log_2(\hmax) \rfloor\)}
    \STATE \((h_p, i_p) \leftarrow \underset{(h,i)\in \tree :~ \num_{h,i} \geq 2^p}{\arg\max} \estSOO_{h,i},\) 
    \STATE {{}\(\estSOO_{h_p, i_p} \leftarrow F(\OPT(\varSOO_{h_p,i_p},  \optSOO_{h_p,i_p}, \lfloor \hmax/2 \rfloor), \varSOO_{h_p,i_p})\)}
\ENDFOR
\STATE Set \(\pOut \leftarrow \underset{p\in [\lfloor\log_2(\hmax)\rfloor]}{\arg\max} ~ \estSOO_{h_p, i_p}\), \(\returnSol \leftarrow \varSOO_{h_{\pOut}, i_{\pOut}},\) and \(\returnVal \leftarrow \estSOO_{h_{\pOut}, i_{\pOut}}\)
\RETURN \(\returnVal, \returnSol\)
\end{algorithmic}\label{alg: Algorithm_ComputingSol}
\end{algorithm}

{{}\subsection{Analysis} 
We now evaluate the performance of Algorithm~\ref{alg: Algorithm_ComputingSol} under the convergence assumptions on \(\OPT\). 
Our algorithmic pipeline follows the hierarchical optimistic search structure of StroquOOL~\cite{bartlett2019simple}; however, the oracle model is fundamentally different. StroquOOL assumes stochastic evaluations with zero-mean noise, so that the error after \(s\) evaluations can be controlled using high-probability concentration bounds, leading to the canonical \(\mathcal{O}(s^{-1/2})\) decay. In contrast, in our setting each evaluation is produced by a finite-time optimization procedure. The resulting error is deterministic, biased, and explicitly dependent on the optimization budget, as characterized in Assumption~\ref{assm: OPT_Convergence_Convex}. Consequently, although our proof strategy builds on the StroquOOL framework, the convergence analysis must be re-derived to account for this budget-dependent deterministic error model.

Furthermore, when the finite-time optimization oracle admits exponential error decay, as in Assumption~\ref{assm: OPT_Convergence_StronglyConvex}, we obtain sharper convergence rates.}

We start by demonstrating that for any \(\iter \in \N\), we can select the maximum tree depth \(\hmax\) in Algorithm~\ref{alg: Algorithm_ComputingSol} to ensure that the total number of iterations of the optimization solver \(\OPT\) is bounded by \(\iter\).

\begin{proposition}\label{prop: Num_Iterations}
For any positive integer \(n,\) if we set
\(
\hmax = \left\lfloor \frac{2\iter}{5\kTree(1 + \log(\iter))^2} \right\rfloor,
\)
then the total number of iterations of the optimization solver \(\OPT\) is bounded above by \(\iter\).
\end{proposition}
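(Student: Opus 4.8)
The plan is to bound the total number of calls to $\OPT$ by accounting separately for the three phases of Algorithm~\ref{alg: Algorithm_ComputingSol}, to show that their sum is at most $\tfrac{5}{2}\kTree\,\hmax\,(1+\log\hmax)^2$, and then to observe that the prescribed choice of $\hmax$ makes this quantity at most $\iter$. Throughout I would write $\log$ for the natural logarithm, repeatedly invoke the elementary harmonic estimate $\sum_{m=1}^{M}\tfrac{1}{m}\le 1+\log M$ (valid for every integer $M\ge1$), and use the branching hypothesis $\kTree\ge 2$ together with the trivial bound $(1+\log\hmax)^2\ge 1$.

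First I would dispatch the \emph{initialization phase}, which appends $\kTree$ children of the root and evaluates each with $\hmax$ iterations, contributing exactly $\kTree\,\hmax$ iterations. The crux is the \emph{tree-expansion phase}: reading off the loop counters, its cost is the double sum $\sum_{h=1}^{\hmax}\sum_{m=1}^{\lfloor\hmax/h\rfloor}\kTree\,\lfloor\hmax/(hm)\rfloor$, since at depth $h$ there are $\lfloor\hmax/h\rfloor$ expansions, each creating $\kTree$ children evaluated for $\lfloor\hmax/(hm)\rfloor$ steps. Dropping the floors and factoring out $\hmax$ bounds this by $\kTree\,\hmax\sum_{h=1}^{\hmax}\tfrac1h\sum_{m=1}^{\lfloor\hmax/h\rfloor}\tfrac1m$; the nested harmonic estimate gives an inner sum of at most $1+\log(\hmax/h)\le 1+\log\hmax$ (using $h\ge1$) and an outer sum of at most $1+\log\hmax$, so the expansion phase costs at most $\kTree\,\hmax\,(1+\log\hmax)^2$. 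Absorbing the initialization term $\kTree\,\hmax\le\kTree\,\hmax\,(1+\log\hmax)^2$ into this bound, the first two phases together cost at most $2\kTree\,\hmax\,(1+\log\hmax)^2$.

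For the \emph{re-evaluation phase}, the loop runs over the $1+\lfloor\log_2\hmax\rfloor$ indices $p=0,\dots,\lfloor\log_2\hmax\rfloor$, each re-evaluating one node for $\lfloor\hmax/2\rfloor$ iterations, for a total of at most $(1+\log_2\hmax)\tfrac{\hmax}{2}$. Converting $\log_2$ to the natural logarithm via $1+\log_2\hmax\le\tfrac{1}{\log2}(1+\log\hmax)$ and then using $(1+\log\hmax)\le(1+\log\hmax)^2$ and $\kTree\ge 2$, I would bound this phase by $\tfrac12\kTree\,\hmax\,(1+\log\hmax)^2$. Adding the three contributions yields a total of at most $\tfrac{5}{2}\kTree\,\hmax\,(1+\log\hmax)^2$. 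To finish, note that the chosen $\hmax$ satisfies $\hmax\le \iter/5\le\iter$ because $5\kTree(1+\log\iter)^2\ge 10$, hence $\log\hmax\le\log\iter$; substituting $(1+\log\hmax)^2\le(1+\log\iter)^2$ and $\hmax\le\tfrac{2\iter}{5\kTree(1+\log\iter)^2}$ collapses the bound to exactly $\iter$.

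The only genuinely delicate step is the tree-expansion estimate: one must set up the double sum correctly from the loop structure, apply the harmonic bound twice while discarding the floors in the right direction, and confirm that the per-phase constants $1$, $1$, and $\tfrac12$ assemble into precisely $\tfrac52$ — the factor that the denominator $5\kTree$ in the definition of $\hmax$ is tuned to cancel. Everything else (the exact initialization count, the $\log_2$-to-$\log$ conversion, and the final monotonicity substitution) is routine, with $\kTree\ge 2$ being the one structural hypothesis I would flag as essential, since it is what lets the re-evaluation phase be absorbed at the cost of a clean $\tfrac12$.
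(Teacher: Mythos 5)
Your proof is correct and takes essentially the same route as the paper's: an exact count of $\kTree\hmax$ for initialization, the double harmonic-sum estimate giving $\kTree\hmax(1+\log\hmax)^2$ for the tree-expansion phase, the $\tfrac{\hmax}{2}(1+\log_2\hmax)$ bound for re-evaluation, and substitution of the prescribed $\hmax$. The only difference is bookkeeping: you consolidate all three phases into $\tfrac{5}{2}\kTree\hmax(1+\log\hmax)^2$ before substituting, which is why you invoke $\kTree\ge 2$ to absorb the $1/\log 2$ factor, whereas the paper bounds the phases separately by $\tfrac{2n}{5}+\tfrac{2n}{5}+\tfrac{n}{5}$ — a split that in fact goes through for any $\kTree\ge 1$, since $1+\log_2\hmax\le 1+2\log\hmax\le(1+\log\hmax)^2\le(1+\log n)^2$, so the hypothesis you flag as essential is not actually needed.
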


A proof of Proposition~\ref{prop: Num_Iterations} is provided in the Appendix \ref{app: NumIterations}. 

The metric to evaluate the performance of the algorithm is the \emph{optimality gap} of the solution returned by the algorithm as a function of total number of iterations \(\iter\) of \(\OPT\): \(
    \regret(\iter) := \max_{\mbf{w} \in \setSOO} \outerObj(\mbf{w}) - \returnVal,\)
where \(\returnVal\) denotes value returned by the algorithm after \(\iter\) solver iterations (see \textsf{line 16} in Algorithm \ref{alg: Algorithm_ComputingSol}). 
Naturally, in order to study the theoretical convergence properties of Algorithm \ref{alg: Algorithm_ComputingSol}, one needs certain regularity conditions on the function \(\outerObj(\cdot).\) 
\begin{assumption}\label{assm: Func_Part_Assm}
    For any \(\mbf{w}^\ast \in \arg\max_{\mbf{w}\in \setSOO}\outerObj(\mbf{w}),\) there exists \(\constDecay > 0\) and \(\rateDecay\in\, ]0,1[\) such that  \(
        \outerObj(\mbf{w}) \geq \outerObj(\mbf{w}^\ast) - \constDecay \rateDecay^h\) {for all}  \((h, \mbf{w}) \in \N \times \setSOO_{h,i^\ast_h}\), 
    where \(i^\ast_h\) is the index of the partition at depth \(h\) containing \(\mbf{w}^\ast\). 
\end{assumption}
Assumption~\ref{assm: Func_Part_Assm} ensures that the sub-optimality of the function \(\outerObj\) within partitions containing the optimal solution decays geometrically with the size of the partition. In Appendix~\ref{app: SufficientConditions}, we provide sufficient conditions on the minimax optimization problem~\eqref{eq:min_max} that ensure Assumptions~\ref{assm: Func_Part_Assm} is satisfied (see Proposition~\ref{prop: SufficientConditins}).

Next, we introduce 
near-optimality dimension, as defined below: 
\begin{definition}[Near-optimality Dimension \cite{bartlett2019simple}]\label{def: NearOptDim}
    For any \(\constDecay > 0\), \(C > 1\), and \(\rateDecay \in ~]0,1[\), we define \(d(\constDecay, \rateDecay, C)\) to be the \emph{near-optimality dimension} of the function \(\outerObj\) with respect to the partitioning \(\{\setSOO_{h,i}\}_{h\in[\hmax], i\in [\kTree^h]}\) if
\begin{align*}
    d(\constDecay, \rateDecay, C) := \inf\left\{ d' \in \mathbb{R}_+ \,\Big|\, \text{for every}~ h \in [\hmax], \ \mathcal{N}_h(2\constDecay \rateDecay^h) \leq C \rateDecay^{-d'h} \right\},
\end{align*}
where for any \(\epsilon\geq 0, h\in [\hmax]\) and \(\mathcal{N}_h(\epsilon) := \{ i \in [\kTree^h] \mid 
    \max_{\mbf{w} \in \setSOO_{h,i}} \outerObj(\mbf{w}) \geq \outerObj(\mbf{w}^\ast) - \epsilon\}.
\)
\end{definition}

We are now ready to state the rate of convergence of the optimality gap. Naturally, this rate depends on the convergence rate of the optimization solver~\(\OPT\). We first provide the convergence rate of the optimality gap when the solver~\(\OPT\) satisfies Assumption~\ref{assm: OPT_Convergence_Convex}.
\begin{theorem}\label{thm: MainConvexResult}
Suppose that Assumptions~\ref{assm: OPT_Convergence_Convex} and \ref{assm: Func_Part_Assm} hold with \(d\) as the near -optimality dimension of \(\outerObj.\) Let \(\lamW\) be the Lambert W-function (refer Appendix \ref{app: LambertW}).  
\begin{enumerate}[labelwidth=*,align=left, widest=iii, leftmargin=*]
    \item[(i)] If the condition 
    \begin{align}\label{eq: ConditionTheorem}
        \constDecay \exp\left({-\frac{1}{\left(d+\frac{1}{\decayCpgd}\right)}}\mathcal{W}\left( \frac{\left\lfloor \frac{2n}{5\kTree(1+\log(n))^2}\right\rfloor\left(d+\frac{1}{\decayCpgd}\right)\log(\frac{1}{\rateDecay})}{4C\Cpgd^{1/\decayCpgd}\constDecay^{-1/\decayCpgd} } \right)\right) \leq 2^{\decayCpgd}\Cpgd
    \end{align}
    holds,
    then the optimality gap of Algorithm~\ref{alg: Algorithm_ComputingSol}, as a function of the number of iterations of the optimization solver \(\OPT\), satisfies the bound 
{\small \begin{align}\label{eq: GapCondition_satisfied}
        \regret(\iter) \leq &\frac{\Cpgd}{\left\lfloor \frac{n}{5\kTree(1+\log(n))^2}\right\rfloor^{\decayCpgd}} + \constDecay \exp\left({-\frac{\mathcal{W}\left( \frac{\left\lfloor \frac{2n}{5\kTree(1+\log(n))^2}\right\rfloor\left(d+\frac{1}{\decayCpgd}\right)\log(\frac{1}{\rateDecay})}{4C\Cpgd^{1/\decayCpgd}\constDecay^{-1/\decayCpgd} } \right)}{\left(d+\frac{1}{\decayCpgd}\right)}}\right).
    \end{align}}

    \item[(ii)] If condition~\eqref{eq: ConditionTheorem} is not satisfied, then the optimality gap of Algorithm~\ref{alg: Algorithm_ComputingSol}, as a function of the number of iterations of the optimization solver \(\OPT\), satisfies the bound 
\begin{align}\label{eq: ConditionnotSatisfied}
        \regret(\iter) \leq \begin{cases}2\constDecay\exp\left(-\frac{1}{d}\mathcal{W}\left(\frac{d\log(1/\rateDecay)\left\lfloor \frac{2n}{5\kTree(1+\log(n))^2}\right\rfloor}{2C}\right)\right), & \text{if} \ d > 0, \\ 
       2\constDecay \rho^{\frac{1}{2C}\left\lfloor \frac{2n}{5\kTree(1+\log(n))^2}\right\rfloor}, &\text{if} \ d = 0.
        \end{cases}
    \end{align}
\end{enumerate}
\end{theorem}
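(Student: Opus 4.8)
The plan is to adapt the optimistic tree-search analysis of \cite{bartlett2019simple} to our setting, where the one genuinely new difficulty is that a node value is not observed up to additive unbiased noise but is instead an estimate $\estSOO_{h,i}$ produced by truncating the convex solver $\OPT$ after $\num_{h,i}$ iterations. First I would fix $\hmax = \lfloor 2\iter/(5\kTree(1+\log\iter)^2)\rfloor$ so that, by Proposition~\ref{prop: Num_Iterations}, the total solver budget never exceeds $\iter$. The foundational observation is a two-sided control of every node estimate: since $\OPT$ returns a feasible point of $\setmap(\varSOO_{h,i})$ we always have $\estSOO_{h,i} \geq \outerObj(\varSOO_{h,i})$, while Assumption~\ref{assm: OPT_Convergence_Convex} gives $\estSOO_{h,i} \leq \outerObj(\varSOO_{h,i}) + \Cpgd\,\num_{h,i}^{-\decayCpgd}$. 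Thus each estimate is \emph{optimistic} (it overestimates the truth) by an amount quantitatively controlled by how many iterations it received. Together with Assumption~\ref{assm: Func_Part_Assm}, which forces the center of the optimal-path cell at depth $h$ to satisfy $\outerObj(\varSOO_{h,i^\ast_h}) \geq \outerObj(\mbf{w}^\ast) - \constDecay\rateDecay^h$, this is what licenses the selection rule to keep tracking the maximizer $\mbf{w}^\ast$.

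The heart of the argument is an inductive lemma asserting that the algorithm expands the optimal-path cell $\setSOO_{h,i^\ast_h}$ at every depth $h$ up to a critical depth $H$, provided that at depth $h$ the $m$-th expansion is run for at least $s_h \asymp (\Cpgd/(\constDecay\rateDecay^h))^{1/\decayCpgd}$ iterations, so that the residual optimism $\Cpgd\,\num^{-\decayCpgd}$ no longer exceeds the gap scale $\constDecay\rateDecay^h$ of Assumption~\ref{assm: Func_Part_Assm}. To bound the number of leaves that can outrank the optimal-path node I would invoke Definition~\ref{def: NearOptDim}: only the $\mathcal{N}_h(2\constDecay\rateDecay^h)\leq C\rateDecay^{-dh}$ near-optimal cells can compete, so the optimal-path node is selected within the first $C\rateDecay^{-dh}$ expansions at depth $h$. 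Since the $m$-th expansion is evaluated with $\lfloor\hmax/(hm)\rfloor$ iterations, demanding that the optimal-path node be reached while still receiving $\geq s_h$ iterations yields the per-depth budget requirement $\hmax \gtrsim h\,s_h\,C\rateDecay^{-dh}\asymp h\,\rateDecay^{-h(d+1/\decayCpgd)}$; the explicit factor $h$ is precisely the $1/h$ appearing in the allocation $\lfloor\hmax/(hm)\rfloor$.

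Summing these per-depth costs and requiring the total to stay below $\hmax$ reduces, after bounding the geometric sum by its dominant term, to an inequality of the form $h\,e^{ch}\lesssim \hmax$ with $c=(d+1/\decayCpgd)\log(1/\rateDecay)$. It is exactly this mixed polynomial-times-exponential dependence on $h$ that no ordinary logarithm can invert: writing $u=ch$ turns it into $u\,e^{u}\lesssim c\,\hmax/(4C\Cpgd^{1/\decayCpgd}\constDecay^{-1/\decayCpgd})$, whose solution is $u=\mathcal{W}(\cdot)$, giving the reachable depth $H=\mathcal{W}(\cdot)/\big((d+1/\decayCpgd)\log(1/\rateDecay)\big)$. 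Substituting back, the tree-induced error at the deepest guaranteed node is $\constDecay\rateDecay^{H}=\constDecay\exp\!\big(-\mathcal{W}(\cdot)/(d+1/\decayCpgd)\big)$, which is the second summand in \eqref{eq: GapCondition_satisfied} and the very quantity compared against $2^{\decayCpgd}\Cpgd$ in \eqref{eq: ConditionTheorem}.

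Finally I would account for the re-evaluation phase: the returned node is the $\arg\max$ over candidates re-run for $\lfloor\hmax/2\rfloor$ iterations, and since the optimal-path node at depth $H$ is among these candidates, its re-evaluated optimistic estimate is at least $\outerObj(\mbf{w}^\ast)-\constDecay\rateDecay^{H}$; hence $\returnVal\geq\outerObj(\mbf{w}^\ast)-\constDecay\rateDecay^{H}$, so that $\regret(\iter)$ collects the tree error $\constDecay\rateDecay^{H}$ plus the residual solver optimism $\Cpgd\lfloor\hmax/2\rfloor^{-\decayCpgd}=\Cpgd/\lfloor \iter/(5\kTree(1+\log\iter)^2)\rfloor^{\decayCpgd}$ of the final re-evaluation, which are exactly the two terms of \eqref{eq: GapCondition_satisfied}. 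The dichotomy between the two cases is whether the accuracy requirement is binding at depth $H$, i.e. whether $s_H\gtrsim 1$: when \eqref{eq: ConditionTheorem} holds the solver is the limiting resource and the $1/\decayCpgd$ term survives in the exponent as above, whereas when it fails the tree reaches a depth at which $s_H$ is order one, the accuracy factor drops out of the budget, and repeating the inversion with $c=d\log(1/\rateDecay)$ produces the cleaner bound \eqref{eq: ConditionnotSatisfied}. The main obstacle I anticipate is the inductive structural lemma of the second paragraph: correctly propagating the inexactness of $\OPT$ through the optimistic rule -- ensuring that a node receiving few iterations, and hence carrying a large over-estimate, cannot permanently displace the optimal-path node -- demands a careful depth-by-depth bookkeeping of which leaves are eligible (those with $\num_{h,i}\geq\lfloor\hmax/(hm)\rfloor$), and this is where the adaptation of \cite{bartlett2019simple} genuinely departs from the noisy-but-unbiased setting they treat.
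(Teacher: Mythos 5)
Your proposal follows essentially the same route as the paper's proof: the same two-sided optimism control on the solver estimates (feasibility from below, Assumption~\ref{assm: OPT_Convergence_Convex} from above), the same key inductive lemma (Lemma~\ref{lem: OptDepthConvex}) guaranteeing expansion of the optimal-path cell whenever \(\constDecay\rateDecay^h \geq \Cpgd 2^{-p\decayCpgd}\) and \(\hmax/(2h2^p)\geq C\rateDecay^{-dh}\), the same regret decomposition into final re-evaluation error plus tree error \eqref{eq: GapBound}, and the same Lambert-\(W\) inversion with the identical case split on whether \(\constDecay\rateDecay^{\tilde h}\leq 2^{\decayCpgd}\Cpgd\) (i.e.\ condition~\eqref{eq: ConditionTheorem}). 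The only cosmetic difference is that you phrase the budget constraint as a sum of per-depth costs, whereas the paper works directly with the per-depth allocation \(\lfloor\hmax/(hm)\rfloor\); both reduce to the same inequality \(h\,e^{ch}\lesssim\hmax\) and hence the same guaranteed depth.
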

A proof of Theorem \ref{thm: MainConvexResult} is provided in Appendix \ref{ssec: FirstThm}. 
\begin{remark}\label{rem: ConvergenceConvex}
Since \(\lamW(\cdot)\) is monotonically increasing on the positive real axis (see Lemma \ref{lem: W_properties}), the bound on the optimality gap in \eqref{eq: GapCondition_satisfied}--\eqref{eq: ConditionnotSatisfied} vanishes as \(\iter\) increases. Combined with Proposition~\ref{prop: ProblemReformulate}, Theorem~\ref{thm: MainConvexResult} ensures that Algorithm~\ref{alg: Algorithm_ComputingSol} converges to the exact solution of the convex--non-concave min-max optimization problem. {{} Furthermore, we obtain exponentially fast convergence when \(d=0\).}
\end{remark}
In the high-\(\iter\) regime, leveraging the properties of the Lambert \(W\)-function (see Lemma~\ref{lem: W_properties}-(iii)), the bounds presented in Theorem~\ref{thm: MainConvexResult} can be refined, as stated in the following result. 
\begin{corollary}\label{cor: MainConvexResult}
Suppose that Assumptions~\ref{assm: OPT_Convergence_Convex} and \ref{assm: Func_Part_Assm}  hold with \(d > 0\) as the near-optimality dimension of \(\outerObj.\)  
\begin{enumerate}[labelwidth=*,align=left, widest=iii, leftmargin=*]
    \item[(i)] If the condition~\eqref{eq: ConditionTheorem} is satisfied and
    \begin{align}
        \label{eq: Large_n_convex_condition_satisfied}\frac{\left\lfloor \frac{2n}{5\kTree(1+\log(n))^2}\right\rfloor \left(d+\frac{1}{\decayCpgd}\right)\log(\frac{1}{\rateDecay})}{4C\Cpgd^{1/\decayCpgd}\constDecay^{-1/\decayCpgd}} > e,
    \end{align}
    then the optimality gap of Algorithm~\ref{alg: Algorithm_ComputingSol}, as a function of the number of iterations of the optimization solver \(\OPT\), satisfies 
    \begin{align}
        \regret(\iter) \leq \frac{\Cpgd}{\left\lfloor \frac{n}{5\kTree(1+\log(n))^2}\right\rfloor^{\decayCpgd}} + \constDecay\left(\frac{\frac{\left\lfloor \frac{2n}{5\kTree(1+\log(n))^2}\right\rfloor\left(d+\frac{1}{\decayCpgd}\right)\log(\frac{1}{\rateDecay})}{4C\Cpgd^{1/\decayCpgd}\constDecay^{-1/\decayCpgd}}}{\log\left(\frac{\left\lfloor \frac{2n}{5\kTree(1+\log(n))^2}\right\rfloor \left(d+\frac{1}{\decayCpgd}\right)\log(\frac{1}{\rateDecay})}{4C\Cpgd^{1/\decayCpgd}\constDecay^{-1/\decayCpgd}} \right)}\right)^{-\frac{1}{d+\frac{1}{\decayCpgd}}}.
    \end{align}

    \item[(ii)] If the condition~\eqref{eq: ConditionTheorem} is not satisfied but 
    \begin{align}\label{eq: Large_n_convex_condition_not_satisfied}
        \frac{d\log(1/\rateDecay)\left\lfloor \frac{2n}{5\kTree(1+\log(n))^2}\right\rfloor}{2C} > e,
    \end{align}
    then the optimality gap of Algorithm~\ref{alg: Algorithm_ComputingSol}, as a function of the number of iterations of the optimization solver \(\OPT\), satisfies 
    \begin{align}\label{eq: CorollaryRateConditionnotSatisfied}
        \regret(\iter) \leq 2\constDecay\left(\frac{\frac{d\log(1/\rateDecay)\left\lfloor \frac{2n}{5\kTree(1+\log(n))^2}\right\rfloor}{2C}}{\log\left(\frac{d\log(1/\rateDecay)\left\lfloor \frac{2n}{5\kTree(1+\log(n))^2}\right\rfloor}{2C}\right)}\right)^{-1/d}.
    \end{align}
    
\end{enumerate}
\end{corollary}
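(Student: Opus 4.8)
The plan is to obtain both statements directly from Theorem~\ref{thm: MainConvexResult} by substituting the Lambert $W$ terms with explicit $\log/\log\log$ expressions. The only analytic ingredient required is the lower bound on the Lambert $W$-function recorded in Lemma~\ref{lem: W_properties}-(iii), namely $\mathcal{W}(x)\ge \log\!\big(x/\log x\big)=\log x-\log\log x$ for $x>e$. Since the optimality-gap bounds in Theorem~\ref{thm: MainConvexResult} are each of the form $(\text{const})\cdot\exp\!\big(-\mathcal{W}(x)/D\big)$ for a suitable exponent $D>0$ and argument $x>0$, and since $t\mapsto e^{-t/D}$ is strictly decreasing, any lower bound on $\mathcal{W}(x)$ immediately becomes an upper bound on the gap. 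Thus the corollary is essentially a one-line monotonicity substitution applied to the Theorem.

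For part (i), abbreviate $D:=d+\frac{1}{\decayCpgd}$ and let $A$ denote the argument of $\mathcal{W}$ appearing in \eqref{eq: GapCondition_satisfied}. The hypothesis \eqref{eq: Large_n_convex_condition_satisfied} is exactly the statement $A>e$, which guarantees both that $\log A>0$ (so that $\log\log A$ is well defined) and that the Lambert lower bound is applicable. Invoking Lemma~\ref{lem: W_properties}-(iii) gives $\mathcal{W}(A)\ge \log(A/\log A)$, and hence, by monotonicity of the exponential,
\begin{align*}
\exp\left(-\frac{\mathcal{W}(A)}{D}\right)\le \exp\left(-\frac{1}{D}\log\frac{A}{\log A}\right)=\left(\frac{A}{\log A}\right)^{-1/D}.
\end{align*}
Substituting this into the second summand on the right-hand side of \eqref{eq: GapCondition_satisfied}, while carrying the first summand $\Cpgd/\lfloor\cdots\rfloor^{\decayCpgd}$ over verbatim, yields the claimed refined bound.

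For part (ii), let $B$ be the argument of $\mathcal{W}$ in \eqref{eq: ConditionnotSatisfied}. Here \eqref{eq: Large_n_convex_condition_not_satisfied} reads $B>e$, so the same two consequences hold and Lemma~\ref{lem: W_properties}-(iii) again yields $\mathcal{W}(B)\ge \log(B/\log B)$. Proceeding exactly as above,
\begin{align*}
2\constDecay\exp\left(-\frac{\mathcal{W}(B)}{d}\right)\le 2\constDecay\left(\frac{B}{\log B}\right)^{-1/d},
\end{align*}
which is precisely \eqref{eq: CorollaryRateConditionnotSatisfied}.

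Structurally the argument is short, so I do not anticipate a genuine difficulty once Theorem~\ref{thm: MainConvexResult} and the Lambert bound are in hand. The single point demanding care---and the reason the extra hypotheses \eqref{eq: Large_n_convex_condition_satisfied} and \eqref{eq: Large_n_convex_condition_not_satisfied} are imposed---is ensuring that the relevant argument exceeds $e$: below this threshold $\log\log$ changes sign and the clean inequality $\mathcal{W}(x)\ge\log(x/\log x)$ fails (indeed the bound holds with equality at $x=e$, since $\mathcal{W}(e)=1=\log(e/\log e)$). The only modest verification is confirming that the quantities $A$ and $B$ read off from \eqref{eq: GapCondition_satisfied} and \eqref{eq: ConditionnotSatisfied} coincide with the bracketed expressions in \eqref{eq: Large_n_convex_condition_satisfied}--\eqref{eq: Large_n_convex_condition_not_satisfied}, and checking the exact form in which Lemma~\ref{lem: W_properties}-(iii) states the lower bound.
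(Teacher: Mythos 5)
Your proposal is correct and follows exactly the route the paper intends: the paper offers no separate proof of Corollary~\ref{cor: MainConvexResult}, stating only that it follows from Theorem~\ref{thm: MainConvexResult} by ``leveraging the properties of the Lambert \(W\)-function (see Lemma~\ref{lem: W_properties}-(iii))'', which is precisely your substitution \(\mathcal{W}(x)\ge\log(x/\log x)\) for \(x>e\) combined with monotonicity of \(t\mapsto e^{-t/D}\). Your identification of the hypotheses \eqref{eq: Large_n_convex_condition_satisfied} and \eqref{eq: Large_n_convex_condition_not_satisfied} as exactly the conditions \(A>e\) and \(B>e\) needed to invoke the Lambert bound is also the correct reading.
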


\begin{remark}\label{rem: CorollaryConvex}
    Condition~\eqref{eq: ConditionTheorem}, intuitively, holds when the optimization solver \(\OPT\) has poor initialization or a small step size (e.g., when \(\Cpgd\) is large). Such poor initialization naturally leads to a larger bound on the optimality gap. 
Indeed, when \(\iter\) is sufficiently large, conditions~\eqref{eq: Large_n_convex_condition_satisfied} and~\eqref{eq: Large_n_convex_condition_not_satisfied} are satisfied and Corollary~\ref{cor: MainConvexResult} guarantees that the optimality gap decays at a rate of \(\tilde{\mathcal{O}}(n^{-1/(d + 1/\decayCpgd)})\) if condition~\eqref{eq: ConditionTheorem} holds, and at a faster rate of \(\tilde{\mathcal{O}}(n^{-1/d})\) otherwise.
\end{remark}

Next, we show that the rate of convergence of optimality gap can be improved if we allow \(\OPT\) to converge exponentially fast (i.e., Assumption \ref{assm: OPT_Convergence_StronglyConvex} holds). 

\begin{theorem}\label{thm: MainStronglyConvexResult}
Suppose that Assumptions~\ref{assm: OPT_Convergence_StronglyConvex} and ~\ref{assm: Func_Part_Assm} hold with \(d\) as the near-optimality dimension of \(\outerObj.\) Let \(\lamW\) be the Lambert W-function (refer Appendix \ref{app: LambertW}). 
\begin{enumerate}[labelwidth=*,align=left, widest=iii, leftmargin=*]
    \item[(i)] If the condition
    \begin{align}\label{eq: ConditionTheoremSC}
       \exp\left(-\frac{2}{d}\mathcal{W}\left( \frac{d}{2}\log\left(\frac{1}{\rateDecay}\right)\sqrt{\frac{\left\lfloor \frac{2n}{5\kTree(1+\log(n))^2}\right\rfloor}{4C}} \right)\right) \leq \gamma_{\textsf{sc}}
    \end{align}
    holds, then the optimality gap of Algorithm~\ref{alg: Algorithm_ComputingSol}, as a function of the number of iterations of the optimization solver \(\OPT\), satisfies the bound 
{\small\begin{align}\label{eq: GapCondition_satisfiedSC}
        \regret(\iter) \leq \SCpgd\decaySCpgd^{\left\lfloor \frac{n}{5\kTree(1+\log(n))^2}\right\rfloor}+ \SCpgd \exp\left(-\frac{2}{d}\mathcal{W}\left( \frac{d}{2}\log\left(\frac{1}{\rateDecay}\right)\sqrt{\frac{\left\lfloor \frac{2n}{5\kTree(1+\log(n))^2}\right\rfloor}{4C}} \right)\right).
    \end{align}}

    \item[(ii)] If condition~\eqref{eq: ConditionTheoremSC} is not satisfied, then the optimality gap of Algorithm~\ref{alg: Algorithm_ComputingSol}, as a function of the number of iterations  of the optimization solver \(\OPT\), satisfies the bound 
\begin{align}\label{eq: ConditionnotSatisfiedSC}
        \regret(\iter) \leq \begin{cases}
        2\SCpgd\exp\left(-\frac{1}{d}\lamW\left(\frac{d\log(1/\rateDecay)\left\lfloor \frac{2n}{5\kTree(1+\log(n))^2}\right\rfloor}{2C}\right)\right), &\text{if} \ d > 0, 
        \\ 
        2\SCpgd \rho^{\frac{1}{2C}\left\lfloor \frac{2n}{5\kTree(1+\log(n))^2}\right\rfloor}, & \text{if} \ d= 0.
        \end{cases}
    \end{align}
\end{enumerate}
\end{theorem}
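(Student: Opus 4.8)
The plan is to reuse the architecture of the proof of Theorem~\ref{thm: MainConvexResult} (Appendix~\ref{ssec: FirstThm}) up to the point where the solver's accuracy enters the budget, and to substitute the geometric decay of Assumption~\ref{assm: OPT_Convergence_StronglyConvex} for the polynomial decay of Assumption~\ref{assm: OPT_Convergence_Convex}. The target is the lower bound
\[
\returnVal \;\ge\; \outerObj(\mbf{w}^\ast) \;-\; \SCpgd\decaySCpgd^{\lfloor \hmax/2\rfloor} \;-\; \SCpgd\rateDecay^{H},
\]
where \(H\) is the deepest level to which the search drives the node \((H,i^\ast_H)\) on the path to \(\mbf{w}^\ast\). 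The first correction is the residual estimation error of the re-evaluation phase (\textsf{Lines 12--14}), which re-scores the returned node with \(\lfloor\hmax/2\rfloor\) iterations of \(\OPT\); since \(\lfloor\hmax/2\rfloor=\lfloor \iter/(5\kTree(1+\log\iter)^2)\rfloor\), Assumption~\ref{assm: OPT_Convergence_StronglyConvex} caps it by the first summand of~\eqref{eq: GapCondition_satisfiedSC}. The second correction absorbs the partition error \(\constDecay\rateDecay^{H}\) from Assumption~\ref{assm: Func_Part_Assm} together with the search-time estimation error, which the rate-matching relation \(\decaySCpgd^{s}\le\rateDecay^{h}\) (imposed below) keeps at the same order \(\SCpgd\rateDecay^{H}\); this becomes the second summand of~\eqref{eq: GapCondition_satisfiedSC} once \(H\) is expressed through the Lambert-\(W\) function. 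Everything therefore reduces to making \(H\) as large as the budget permits.

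Next I would re-run the optimistic-selection argument of~\cite{bartlett2019simple} with biased evaluations, exactly as in the convex proof. The statement to re-establish is: for each depth \(h\le H\), if (a) every expansion carried out at depth \(h\) uses an iteration count \(s\) with \(\decaySCpgd^{s}\le\rateDecay^{h}\), and (b) the algorithm performs at least \(\mathcal{N}_h(2\constDecay\rateDecay^{h})\) expansions at depth \(h\), then the optimistic rule in \textsf{Line 4} must select and expand the node \((h,i^\ast_h)\), so the search descends one further level. Ingredient (b) is controlled by Definition~\ref{def: NearOptDim} and Assumption~\ref{assm: FiniteNearOptimalityDim}, which bound the number of competing near-optimal nodes by \(\mathcal{N}_h(2\constDecay\rateDecay^{h})\le C\rateDecay^{-dh}\); ingredient (a) is the only place where the strongly convex regime departs from the convex one.

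The crux is translating (a) into the iteration schedule \(s_{h,m}=\lfloor\hmax/(hm)\rfloor\) of \textsf{Line 7}. Under Assumption~\ref{assm: OPT_Convergence_StronglyConvex} the condition \(\decaySCpgd^{s_{h,m}}\le\rateDecay^{h}\) becomes \(s_{h,m}\ge h\log(1/\rateDecay)/\log(1/\decaySCpgd)\), i.e.\ \(m\le \hmax\log(1/\decaySCpgd)/\bigl(h^{2}\log(1/\rateDecay)\bigr)\). The decisive feature is that the required iteration count grows only \emph{linearly} in \(h\), which keeps the solver rate \(\decaySCpgd\) out of the exponent but produces the quadratic ceiling \(m\lesssim\hmax/h^{2}\); in the polynomial-rate case the same condition instead forces \(s_{h,m}\gtrsim\rateDecay^{-h/\decayCpgd}\), driving \(1/\decayCpgd\) into the exponent (so that \(\rateDecay^{-dH}\) becomes \(\rateDecay^{-(d+1/\decayCpgd)H}\)) while leaving only a linear factor. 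Pairing the ceiling with (b) through \(\mathcal{N}_h(2\constDecay\rateDecay^{h})\le C\rateDecay^{-dh}\) and retaining the binding constraint at \(h=H\) gives, after absorbing constants into \(4C\), the reachability condition \(H^{2}\rateDecay^{-dH}\lesssim \hmax/(4C)\), equivalently \(H\,\rateDecay^{-dH/2}\lesssim\sqrt{\hmax/(4C)}\); it is precisely this quadratic factor that produces the square-root dependence on \(\hmax\).

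Finally I would invert this inequality for the largest feasible \(H\). Writing \(x=\rateDecay^{-dH/2}\) turns \(H\rateDecay^{-dH/2}=\sqrt{\hmax/(4C)}\) into \(x\log x=\tfrac{d}{2}\log(1/\rateDecay)\sqrt{\hmax/(4C)}\), so \(\log x=\mathcal{W}\bigl(\tfrac{d}{2}\log(1/\rateDecay)\sqrt{\hmax/(4C)}\bigr)\), whence \(\rateDecay^{H}=\exp\bigl(-\tfrac{2}{d}\mathcal{W}(\cdots)\bigr)\); substituting into the target bound yields~\eqref{eq: GapCondition_satisfiedSC}. The dichotomy is governed by whether this \(H\) exceeds the threshold \(\log(1/\decaySCpgd)/\bigl(2\log(1/\rateDecay)\bigr)\) of condition~\eqref{eq: ConditionTheoremSC}: above it the rate-matching balance is active and gives the square-root rate; below it a single iteration of \(\OPT\) already meets the accuracy target at every depth \(h\le H\), so the binding resource is no longer accuracy but the number \(\lfloor\hmax/h\rfloor\) of expansions the schedule affords, and \(\mathcal{N}_h(2\constDecay\rateDecay^{h})\le\lfloor\hmax/h\rfloor\) collapses the constraint to the node-count-limited \(H\rateDecay^{-dH}\lesssim\hmax/(2C)\), whose inversion reproduces~\eqref{eq: ConditionnotSatisfiedSC} with exponent \(1/d\) (the same form as the convex case~\eqref{eq: ConditionnotSatisfied}). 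I expect the principal obstacle to be the biased-evaluation version of the optimistic-selection lemma -- certifying that the estimation errors never let a genuinely sub-optimal node displace \((h,i^\ast_h)\) from either the expansion step or the re-evaluation step -- together with the bookkeeping that converts the schedule \(\lfloor\hmax/(hm)\rfloor\) into the clean quadratic reachability constraint that drives the Lambert-\(W\) rate.
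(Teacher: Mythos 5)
Your proposal follows essentially the same route as the paper's proof in Appendix~\ref{ssec: SecondThm}: the same regret decomposition (re-evaluation error \(\SCpgd\decaySCpgd^{\hmax/2}\) plus the depth-reached error \(\constDecay\rateDecay^{h}\)), the same biased-evaluation optimistic-selection lemma (the paper's Lemma~\ref{lem: OptDepth_StronglyConvex}, whose two hypotheses \(\constDecay\rateDecay^h \geq \SCpgd\decaySCpgd^{2^p}\) and \(\hmax/(2h2^p)\geq C\rateDecay^{-dh}\) are exactly your rate-matching condition (a) and expansion-count condition (b)), the same quadratic reachability constraint \(\tilh^2\rateDecay^{-d\tilh}\approx\hmax/(4C)\) inverted via the Lambert-\(W\) function, and the same dichotomy — condition~\eqref{eq: ConditionTheoremSC} holding iff \(\tp\geq 0\), with the complementary case handled by setting \(p=0\) so that a single evaluation meets the accuracy target and the node count becomes the binding constraint, yielding the \(1/d\) exponent. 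The only differences are presentational (you phrase the schedule bookkeeping as the ceiling \(m\lesssim\hmax/h^2\) rather than constructing the integer pair \((\hd,\pd)\) and verifying the lemma's hypotheses one by one), so this is a faithful reconstruction of the paper's argument.
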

A proof of Theorem \ref{thm: MainStronglyConvexResult} is provided in Appendix \ref{ssec: SecondThm}. 

In the high-\(\iter\) regime, leveraging the properties of the Lambert \(W\)-function (see Lemma \ref{lem: W_properties}-(iii)), the bounds presented in Theorem~\ref{thm: MainConvexResult} can be refined, as stated in the following result. 

\begin{corollary}\label{cor: MainStronglyConvexResult}
Suppose that Assumptions~\ref{assm: OPT_Convergence_StronglyConvex} and  ~\ref{assm: Func_Part_Assm} hold with \(d\) as the near-optimality dimension of \(\outerObj.\)
\begin{enumerate}[labelwidth=*,align=left, widest=iii, leftmargin=*]
    \item[(i)] If condition~\eqref{eq: ConditionTheoremSC} is satisfied but 
    \begin{align}\label{eq: Large_n_strongly_convex_condition_satisfied}
     \frac{d}{2}\log\left(\frac{1}{\rateDecay}\right)\sqrt{\frac{\left\lfloor \frac{2n}{5\kTree(1+\log(n))^2}\right\rfloor}{4C}} > e,
\end{align}
    then the optimality gap of Algorithm~\ref{alg: Algorithm_ComputingSol}, as a function of the number of iterations of the optimization solver \(\OPT\), is bounded as:
    \begin{align}
    \regret(\iter) \leq \SCpgd\decaySCpgd^{\left\lfloor \frac{n}{5\kTree(1+\log(n))^2}\right\rfloor} + \left( 
   \frac{ \frac{d}{2}\log\left(\frac{1}{\rateDecay}\right)\sqrt{\frac{\left\lfloor \frac{2n}{5\kTree(1+\log(n))^2}\right\rfloor}{4C}}}{\log\left( \frac{d}{2}\log\left(\frac{1}{\rateDecay}\right)\sqrt{\frac{\left\lfloor \frac{2n}{5\kTree(1+\log(n))^2}\right\rfloor}{4C}}\right)} \right)^{-2/d}.
\end{align}
    \item[(ii)] If condition~\eqref{eq: ConditionTheoremSC} is not satisfied but \eqref{eq: Large_n_convex_condition_not_satisfied} holds,
    then the optimality gap of Algorithm~\ref{alg: Algorithm_ComputingSol}, as a function of the number of iterations of the optimization solver \(\OPT\), is bounded as:
    \begin{align}\label{eq: CorollaryRateConditionnotSatisfiedSC}
        \regret(\iter) \leq 2\constDecay\left(\frac{\frac{d\log(1/\rateDecay)\left\lfloor \frac{2n}{5\kTree(1+\log(n))^2}\right\rfloor}{2C}}{\log\left(\frac{d\log(1/\rateDecay)\left\lfloor \frac{2n}{5\kTree(1+\log(n))^2}\right\rfloor}{2C}\right)}\right)^{-1/d}.
    \end{align}
    \end{enumerate}
\end{corollary}

    The bounds presented in Theorem~\ref{thm: MainStronglyConvexResult} (respectively, Corollary~\ref{cor: MainStronglyConvexResult}) are tighter than those in Theorem~\ref{thm: MainConvexResult} (respectively, Corollary~\ref{cor: MainConvexResult}). This improvement is possible because Assumption~\ref{assm: OPT_Convergence_StronglyConvex} imposes a stronger convergence requirement on the optimization solver \(\OPT\) than Assumption~\ref{assm: OPT_Convergence_Convex}. 

{{}\begin{remark}
The above rate should be contrasted with a uniform $\epsilon$-net discretization of $\setSOO$, which would require $\mathcal{O}(\epsilon^{-D})$ evaluations, where $D$ is the ambient dimension. In contrast, EXOTIC achieves a rate governed by the near-optimality dimension $d$, which captures the effective size of near-optimal regions. When $d \ll D$, this leads to significant gains over uniform discretization approaches.
\end{remark}}

\begin{remark}\label{rem: CorollaryStronglyConvex}
Condition~\eqref{eq: ConditionTheoremSC}, intuitively, holds when the optimization solver \(\OPT\) uses a small step size (i.e., when \(\decaySCpgd\) is large). In such a situation, it leads to a slightly weaker bound on the optimality gap. Nevertheless, due to Assumption~\ref{assm: OPT_Convergence_StronglyConvex}, which guarantees exponential convergence of \(\OPT\), the overall convergence rates remain comparable even when this condition is not satisfied. 
Indeed, when \(\iter\) is sufficiently large, both conditions\eqref{eq: Large_n_convex_condition_not_satisfied} and ~\eqref{eq: Large_n_strongly_convex_condition_satisfied} are satisfied. As a result, Corollary~\ref{cor: MainStronglyConvexResult} ensures that the optimality gap decays at a rate of \(\tilde{\mathcal{O}}(n^{-1/d})\), regardless of whether condition~\eqref{eq: ConditionTheoremSC} holds. Interestingly, in terms of \(n,\) the rate matches the convergence rate of the SequOOL algorithm in \cite{bartlett2019simple}, which requires the function \(\outerObj\) to be evaluated exactly (i.e., \(\OPT\) is not required).
\end{remark}

{{}
\subsection{Sufficient conditions for Assumption \ref{assm: Func_Part_Assm}}\label{ssec:SuffCond}
In this section, we provide sufficient conditions under which Assumptions \ref{assm: Func_Part_Assm} holds.
First, we make the following assumption:
\begin{assumption}\label{assm: BigX}
    For every \(\mbf{x}\in X,\) the mapping \(Y\ni\mbf{y}\mapsto f(\mbf{x},\mbf{y})\) is \(L_f-\)Lipschitz. 
\end{assumption}

\begin{proposition}\label{prop: SufficientConditins}
   Suppose that Assumption~\ref{assm: BigX} holds. Furthermore, assume that the hierarchical partitioning \(\{\setSOO_{i,h}\}_{i \in \kTree^h,\, h \in [\hmax]}\) satisfies the following geometric condition: there exist constants \(\alpha > 0\) and \(\beta \in\, ]0,1[\) such that for all \(h \in [\hmax]\) and all \(i \in [\kTree^h]\),
\begin{align}\label{eq: ConditionGeometric}
    \|\mbf{w} - \mbf{w}'\| \leq \alpha \beta^h \quad \text{for all}~ \mbf{w}, \mbf{w}' \in \setSOO_{i,h}.
\end{align}
Then Assumptions~\ref{assm: Func_Part_Assm} is satisfied with \(\nu =\sqrt{d_x+1}L_f\) and \(\rho=\beta\).
\end{proposition}

A proof of Proposition \ref{prop: SufficientConditins} is provided in Appendix \ref{app: SufficientConditions}.

\begin{remark}\label{rem: NCCMM}
    Algorithm~\ref{alg: Algorithm_ComputingSol}, together with its theoretical analysis, can also be applied to non-convex--concave min-max problems. Consider \(
    \min_{\mbf{x}\in \xSet}\max_{\mbf{y}\in \ySet} f(\mbf{x},\mbf{y}),\)
where \(\xSet\) is compact, \(\ySet\) is closed and convex, and \(f\) is non-convex in \(\mbf{x}\) and concave in \(\mbf{y}\). Defining \(
    \tilde f(\mbf{x},\mbf{y}) := - f(\mbf{x},\mbf{y}),\)
the problem can be equivalently written as \(
    \min_{\mbf{x}\in \xSet}\max_{\mbf{y}\in \ySet} f(\mbf{x},\mbf{y})
    =
    - \max_{\mbf{x}\in \xSet}\min_{\mbf{y}\in \ySet} \tilde f(\mbf{x},\mbf{y}).\)
Since \(f\) is concave in \(\mbf{y}\), the function \(\tilde f\) is convex in \(\mbf{y}\). Hence, for every fixed \(\mbf{x}\), the inner problem
is convex.
We then define the induced outer objective \(
    \tilde G(\mbf{x}) := \min_{\mbf{y}\in \ySet} \tilde f(\mbf{x},\mbf{y}),\)
and apply Algorithm~\ref{alg: Algorithm_ComputingSol} to solve \(
    \max_{\mbf{x}\in \xSet} \tilde G(\mbf{x}).\)
The resulting value of the original non-convex--concave min-max problem is obtained by negating the computed max-min value.

The convergence guarantees of Algorithm~\ref{alg: Algorithm_ComputingSol} continue to hold under the corresponding modification of Assumption~\ref{assm: Func_Part_Assm}, where \(G\) and \(W\) are replaced by \(\tilde G\) and \(\xSet\), respectively. In particular, by an argument analogous to Proposition~\ref{prop: SufficientConditins}, the modified assumption holds if the hierarchical partition of \(\xSet\) has geometrically decaying cell diameters and \(\tilde f(\cdot,\mbf{y})\) is uniformly Lipschitz in \(\mbf{x}\) over all \(\mbf{y}\in \ySet\). Under this condition, the value function \(\tilde G\) is Lipschitz on \(\xSet\), and the geometric decay of the partition cells implies the required regularity condition. The proof follows the same steps as the proof of Proposition~\ref{prop: SufficientConditins}.
\end{remark}

}

\section{Numerical Study}\label{sec: Numerics}
In this section, we numerically evaluate the effectiveness of Algorithm~\ref{alg: Algorithm_ComputingSol} for solving convex–non-concave min-max optimization problems. We compare its performance with state-of-the-art methods, including Gradient Descent Ascent (GDA) \cite{jin2020local} and Alternating Gradient Projection (AGP) \cite{xu2023unified}. \footnote{Note that the AGP and GDA algorithms require careful fine-tuning of the hyperparameters in practice. The parameters used in this study are presented in the Appendix \ref{app:NumericalExtras}.}
We begin with a hand-crafted convex–non-concave min-max problem that admits an analytical solution. This example serves as a benchmark to evaluate the performance of GDA, AGP, and our proposed Algorithm~\ref{alg: Algorithm_ComputingSol}. We further validate Algorithm~\ref{alg: Algorithm_ComputingSol} on benchmark problems from the \textsf{SIPAMPL} database \cite{vaz_sipampl}.
Finally, we demonstrate the application of Algorithm~\ref{alg: Algorithm_ComputingSol} in computing security strategies in multi-player games involving three or more players—providing, to the best of our knowledge, the first principled algorithmic approach for these problems. The numerical experiments were implemented in Julia (version 1.11.3) and Python (version 3.10.7).

The partitioning parameter $K$ in Algorithm~\ref{alg: Algorithm_ComputingSol} controls the number of child nodes which are created by splitting a parent node. Choosing a large value of $K$ promotes higher exploration by making the algorithm perform a higher number of function evaluations at a given level in the tree. However, since the total number of nodes in the tree increases exponentially with $K$, we recommended starting with the minimal value $K = 2$ and increasing $K$ only if poor performance is observed at this value. For the purpose of our numerical study, we select $K = 2$. An interesting future research direction could be to design how $K$ can be adaptively selected at every depth $h$ of the tree; however, this is beyond the scope of the current work.




\subsection{Hand-crafted Example}
\label{subsec:hand_crafted_example}
Given positive integers \(\xSetDim\) and \(\ySetDim\), consider the following min-max optimization problem:
\begin{align}\label{eq: toy_example}
    \min_{\mathbf{x} \in [-c, c]^{\xSetDim}} \max_{\mathbf{y} \in [-1, 1]^{\ySetDim}} f(\mathbf{x}, \mathbf{y}) = -(\mathbf{1}^\top \mathbf{y})^3 + (\mathbf{1}^\top \mathbf{x})(\mathbf{1}^\top \mathbf{y}),
\end{align}
where \(c > {3\ySetDim^2}/{\xSetDim}\). Note that the objective function \(f\) is convex (linear) in \(\mathbf{x}\) but non-concave in \(\mathbf{y}\). 

\begin{proposition}\label{prop: HandCraftExample}
   The optimal value of \eqref{eq: toy_example} is equal to \(0.25 \ySetDim^3\). Additionally, the set of optimal solution is given by 
   \[
\left\{(\mbf{x}, \mbf{y}) \in [-c, c]^{\xSetDim}\times [-1, 1]^{\ySetDim} \mid  ~\mathbf{1}^\top \mathbf{x} = 0.75 \ySetDim^2,~  \mathbf{1}^\top \mathbf{y} \in \left\{-\ySetDim, 0.5\ySetDim\right\}\right\}. 
   \]
\end{proposition}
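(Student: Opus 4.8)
The plan is to reduce the high-dimensional problem to a one-dimensional min--max in the aggregate variables \(s := \mathbf{1}^\top\mathbf{x}\) and \(u := \mathbf{1}^\top\mathbf{y}\). Since \(f(\mathbf{x},\mathbf{y}) = -u^3 + su\) depends on \((\mathbf{x},\mathbf{y})\) only through \((s,u)\), and since \(s\) ranges over all of \([-c\xSetDim, c\xSetDim]\) as \(\mathbf{x}\) ranges over \([-c,c]^{\xSetDim}\) (and likewise \(u\) ranges over \([-\ySetDim, \ySetDim]\)), by continuity the problem is equivalent to
\[
\min_{s\in[-c\xSetDim,c\xSetDim]}\ \phi(s), \qquad \phi(s) := \max_{u\in[-\ySetDim,\ySetDim]}\bigl(-u^3+su\bigr).
\]

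Next, I analyze the inner maximization via the derivative \(\partial_u(-u^3+su) = s - 3u^2\). For \(s\le 0\) the map \(u\mapsto -u^3+su\) is nonincreasing on \([-\ySetDim,\ySetDim]\), so \(\phi(s) = \ySetDim^3 - s\ySetDim \ge \ySetDim^3\). For \(s > 3\ySetDim^2\) it is increasing, so \(\phi(s) = -\ySetDim^3 + s\ySetDim > 2\ySetDim^3\). The interesting regime is \(0 < s \le 3\ySetDim^2\), where the unique interior critical point \(u^\star = \sqrt{s/3}\in(0,\ySetDim]\) is a local maximizer with value \(\tfrac{2}{3\sqrt3}s^{3/2}\); since \(u\mapsto -u^3+su\) decreases on \([u^\star,\ySetDim]\), the right endpoint is dominated, and comparing with the left endpoint gives \(\phi(s) = \max\{\ySetDim^3 - s\ySetDim,\ \tfrac{2}{3\sqrt3}s^{3/2}\}\).

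I then minimize \(\phi\). On \((0, 3\ySetDim^2]\), \(\phi\) is the pointwise maximum of the strictly decreasing affine term \(A(s) = \ySetDim^3 - s\ySetDim\) and the strictly increasing term \(B(s) = \tfrac{2}{3\sqrt3}s^{3/2}\); hence it is minimized at their unique crossing \(A(s) = B(s)\), which I solve to obtain \(s^\star = \tfrac34\ySetDim^2\) and \(\phi(s^\star) = \tfrac14\ySetDim^3\). Comparing with the outer regimes (where \(\phi \ge \ySetDim^3\) and \(\phi > 2\ySetDim^3\), respectively) confirms this is the global minimizer, and the hypothesis \(c > 3\ySetDim^2/\xSetDim\) guarantees \(s^\star = \tfrac34\ySetDim^2 < 3\ySetDim^2 < c\xSetDim\), so \(s^\star\) is feasible (indeed interior) and the constrained minimum equals the unconstrained one. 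Equivalently, since \(\phi\) is convex as a pointwise maximum of the affine-in-\(s\) maps \(s\mapsto -u^3+su\), optimality of \(s^\star\) follows from \(0\) lying in its subdifferential, namely the convex hull \([-\ySetDim,\tfrac12\ySetDim]\) of the two inner maximizers.

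Finally, I read off the solution set. The outer minimizer is unique in the aggregate variable, forcing \(\mathbf{1}^\top\mathbf{x} = \tfrac34\ySetDim^2\); at \(s^\star\) the inner maximum \(\tfrac14\ySetDim^3\) is attained both at the interior point \(u^\star = \tfrac12\ySetDim\) and at the left endpoint \(u = -\ySetDim\), giving \(\mathbf{1}^\top\mathbf{y}\in\{-\ySetDim,\tfrac12\ySetDim\}\) and the claimed set. The main obstacle is the inner-maximization case analysis: one must verify that the minimizing \(s\) falls precisely in the regime where an interior critical point and a boundary point tie for the maximum, since it is exactly this coexistence that both pins down the value \(\tfrac14\ySetDim^3\) and produces the two-element set of optimal \(\mathbf{y}\)-sums.
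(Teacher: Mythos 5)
Your proof is correct and follows essentially the same route as the paper's: reduce to the aggregate variables \(X=\mathbf{1}^\top\mathbf{x}\), \(Y=\mathbf{1}^\top\mathbf{y}\), identify the inner maximizer among endpoints and stationary points of the cubic, and minimize the resulting envelope \(\Phi\), which is the maximum of the decreasing affine branch \(\ySetDim^3-\ySetDim X\) and the increasing branch \(\tfrac{2}{3\sqrt{3}}X^{3/2}\), at their unique crossing \(X=0.75\ySetDim^2\). Your organization is slightly tighter (three monotonicity regimes plus a max-of-monotone-functions argument, rather than the paper's six-case comparison of the four candidate values), but the decomposition, the key computations, and the resulting solution set are identical.
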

A proof of Proposition \ref{prop: HandCraftExample} is provided in the Appendix \ref{app: proof_handcraft}.

\begin{figure}[h!]
	\centering
	\begin{subfigure}{.49\textwidth}
		\includegraphics[width=\textwidth]{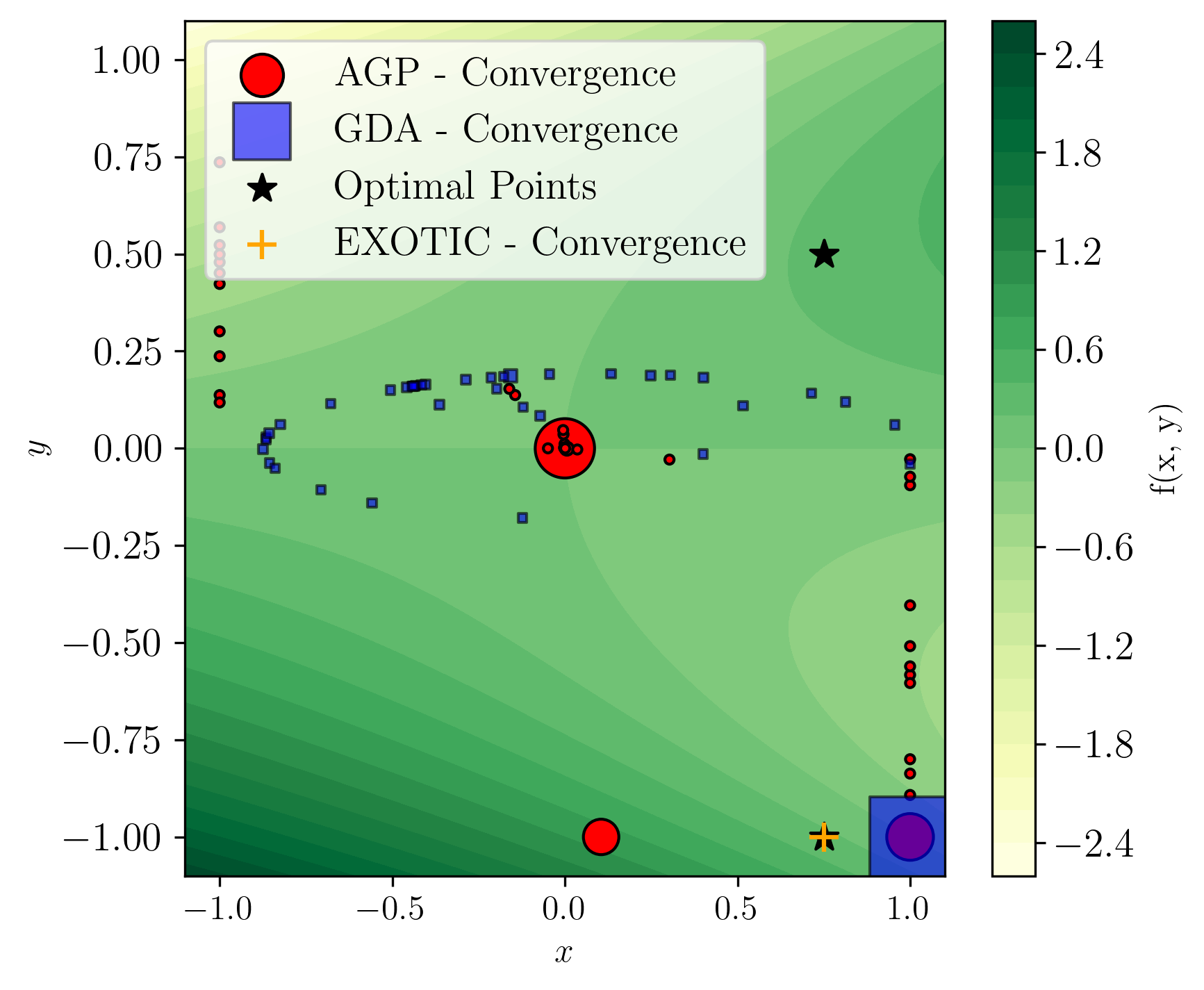}
	\end{subfigure}
	\begin{subfigure}{.49\textwidth}
		\includegraphics[width=\textwidth]{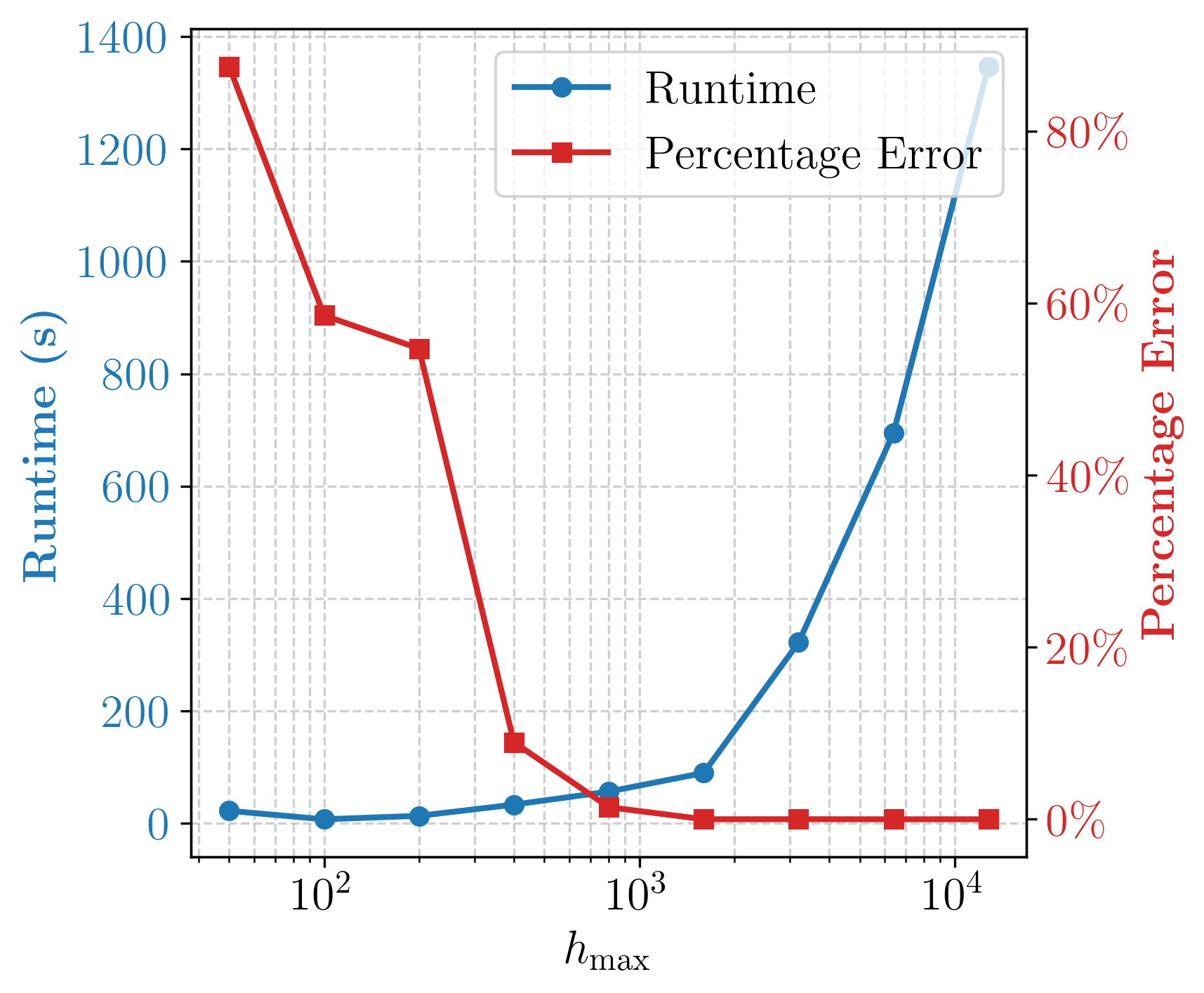}
	\end{subfigure}
	\caption{\small
\textbf{Left:} Solutions obtained by GDA~\cite{jin2020local} (blue squares) and AGP~\cite{xu2023unified} (red circles) for~\eqref{eq: toy_example} with $\xSetDim = \ySetDim = c = 1$, over $125$ runs with random initial conditions. Marker sizes are proportional to the number of runs converging to the corresponding point.  The optimal solution is denoted by black stars. The solution returned by EXOTIC is denoted by yellow plus sign. 
\textbf{Right:} Runtime (seconds) and percentage error, defined as $100\times \frac{|0.25\ySetDim^3 - \hat{G}|}{|0.25\ySetDim^3|}$ where $\hat{G}$ is the value returned by Algorithm~\ref{alg: Algorithm_ComputingSol}, as a function of $h_{\text{max}}$ under EXOTIC for ~\eqref{subsec:hand_crafted_example} with $d_x = d_y = 5$.
}

    \label{fig:AGP_GDA_EXOTIC_benchmarking}
\end{figure}

\par \noindent 

Figure~\ref{fig:AGP_GDA_EXOTIC_benchmarking} (\textbf{Left}) demonstrates that the gradient-based algorithms, GDA and AGP, fail to solve \eqref{subsec:hand_crafted_example} even when $d_x = d_y = c = 1$. In particular, AGP frequently converges to $(0,0)$, which is a stationary point of the dynamics since $\nabla_x f(0,0) = \nabla_y f(0,0) = 0$. On the other hand, GDA typically converges to $(1,1)$, which is a stationary point of GDA dynamics. The smaller blue squares near the center in Figure~\ref{fig:AGP_GDA_EXOTIC_benchmarking} (\textbf{Left}) correspond to cycles in the GDA trajectories. Meanwhile, EXOTIC is able to converge to an optimal solution. An animated video presenting the nodes explored by EXOTIC is available at:   \url{https://tinyurl.com/yfuzurs9}. 
Figure~\ref{fig:AGP_GDA_EXOTIC_benchmarking} (\textbf{Right}) illustrates that the runtime of EXOTIC increases proportionally with $h_{\text{max}}$, as a larger tree is required. Moreover, as $h_{\text{max}}$ increases, the relative percentage error decreases rapidly and eventually approaches zero.


Table~\ref{tab:dimension_results} presents the performance of EXOTIC under various parameter settings for~\eqref{subsec:hand_crafted_example}. Since the runtime of the algorithm scales linearly with $h_{\text{max}}$, a larger value of $h_{\text{max}}$ should be selected only when the effective problem dimension $d_x d_y$ increases. Empirically, we find that setting $h_{\text{max}} = 10^2 d_x d_y$ suffices to achieve a relative percentage error below $0.001\%$. The table further demonstrates that EXOTIC attains high-accuracy solutions even for problems with $d_y = 100$.  
As $d_x$ grows, however, each invocation of the convex optimization solver \(\OPT\) becomes computationally expensive. In such cases, it is advisable to employ specialized convex optimization solvers optimized for high-dimensional settings to maintain low run-times.

\begin{table}[h!]

\centering

\begin{subtable}{\linewidth}
\centering
\caption{Low-dimensional}
\begin{tabular}{ccccccc}
\toprule
$d_x$ & $d_y$ & $\mathrm{Opt}_{\text{true}}$ & $\hat{G}$ & $\% ~ \text{Error}$ & $h_{\max}$ & Runtime (s) \\
\midrule
1 & 1 & 0.25 & 0.25 & 0 & $10^2$ & $5 \times 10^0$ \\
1 & 2 & 2 & 2 & 0 & $2 \times 10^2$ & $6 \times 10^0$  \\
2 & 1 & 0.25 & 0.25 & 0 & $2 \times 10^2$ & $7 \times 10^0$ \\
3 & 2 & 2 & 1.998 & $0.1\%$ & $4 \times 10^2$ & $3 \times 10^1$ \\
2 & 3 & 6.75 & 6.75 & 0 & $5 \times 10^2$ & $2 \times 10^1$ \\
3 & 3 & 6.75 & 6.738 & $0.18\%$ & $6 \times 10^2$ & $4.8 \times 10^1$ \\
\bottomrule
\end{tabular}
\end{subtable}

\vspace{0.5em} 

\begin{subtable}{\linewidth}
\centering
\caption{Moderate-dimensional}
\begin{tabular}{ccccccc}
\toprule
$d_x$ & $d_y$ & $\mathrm{Opt}_{\text{true}}$ & $\hat{G}$ & $\% ~ \text{Error}$ & $h_{\max}$ & Runtime (s) \\
\midrule
5 & 5 & 31.25 & 31.24 & $0.03\%$ & $1.6 \times 10^3$ & $9 \times 10^1$ \\
3 & 10 & 250 & 249.94 & $0.02\%$ & $2 \times 10^3$ & $1.9 \times 10^2$ \\
10 & 3 & 6.75 & 6.74 & $0.15\%$ & $2 \times 10^3$ & $1 \times 10^2$ \\
3 & 20 & 2000 & 1999.75 & $0.01\%$ & $4 \times 10^3$ & $3 \times 10^2$ \\
20 & 3 & 6.75 & 6.736 & $0.2\%$ & $4 \times 10^3$ & $4 \times 10^2$ \\
\bottomrule
\end{tabular}
\end{subtable}

\vspace{0.5em}

\begin{subtable}{\linewidth}
\centering
\caption{High-dimensional}
\begin{tabular}{ccccccc}
\toprule
$d_x$ & $d_y$ & $\mathrm{Opt}_{\text{true}}$ & $\hat{G}$ & $\% ~ \text{Error}$ & $h_{\max}$ & Runtime (s) \\
\midrule
2 & 100 & $2.5 \times 10^5$ & $2.48 \times 10^5$ & $0.8\%$ & $1.3 \times 10^4$ & $1.5 \times 10^3$  \\
20 & 20 & 2000 & 1949.5 & $2.5\%$ & $1.3 \times 10^4$ & $2.4 \times 10^3$ \\
\bottomrule
\end{tabular}
\end{subtable}

\caption{\small 
Performance of EXOTIC on \eqref{eq: toy_example} for various problem parameters. 
$\mathrm{Opt}_{\text{true}}$ denotes the ground-truth optimal value from Proposition~\ref{prop: HandCraftExample}, and $\hat{G}$ is the value computed by EXOTIC. 
The relative percentage error is defined as $\%~\mathrm{Error} = 100 \times \frac{|\mathrm{Opt}_{\text{true}} - \hat{G}|}{\mathrm{Opt}_{\text{true}}}$. 
Here, $h_{\max}$ is the maximum depth of the tree expansion, and runtime is reported in seconds.
}

\label{tab:dimension_results}

\end{table}

{
\subsection{Benchmark Problems from the \texttt{SIPAMPL} Database}\label{ssec:BenchmarkSIPAMPL} Next, we evaluate the performance of our proposed algorithm on benchmark problems from the \texttt{SIPAMPL} database \cite{vaz_sipampl}. Specifically, we consider the following three problems: \texttt{leon10} \cite{LEON200078}, \texttt{hettich4} \cite{hettich_1979}, and \texttt{hettich5} \cite{hettich_1979}:
\begin{align}
    \min_{(x_1, x_2) \in \mathbb{R}^2} \max_{y \in [0, 2]} 
    \left|x_1y + x_2\exp(y) - y^2\right|, 
    \tag{\texttt{leon10} \cite{LEON200078}}
\end{align}

\begin{align}
    \min_{x \in \mathbb{R}} \max_{y \in [0, 0.75]} 
    \left|1 - 8xy + 8x^2\right|, 
    \tag{\texttt{hettich4} \cite{hettich_1979}}
\end{align}

\begin{align}
    \min_{(x_1, x_2) \in \mathbb{R}^2} 
    \max_{(y_1, y_2) \in (0, \sqrt{2}]^2} 
    \left|
    y_1^2 + y_2^2 
    - \left(
    x_1\sqrt{y_1^2 + y_2^2}
    + x_2\exp\left(\sqrt{y_1^2 + y_2^2}\right)
    \right)
    \right|. 
    \tag{\texttt{hettich5} \cite{hettich_1979}}
\end{align}

Table~\ref{table:CSIP_benchmark} compares the performance of EXOTIC with GDA and AGP on these benchmark problems.
EXOTIC consistently attains errors on the order of $10^{-4}$ in the worst case, demonstrating reliable accuracy across problem instances. In contrast, AGP and GDA fail to converge in most cases, and even when convergence occurs, the resulting solutions exhibit poor quality. 
}

\begin{table}[h!]
\centering
\begin{tabular}{lllll}
\toprule
\textbf{Problem} & \multicolumn{4}{c}{\textbf{Values}}\\
    \cmidrule(r{4pt}){2-5}
    & \textbf{Best reported} & \textbf{EXOTIC} & \textbf{GDA} & \textbf{AGP} \\ \midrule
      leon10  (\cite{LEON200078}) &  $0.53825$    &   $0.5382$  & \DNconv  & \DNconv  \\
      hettich4 (\cite{hettich_1979})  & $1.0$ & $0.9999$   & \DNconv & \DNconv \\
      hettich5 (\cite{hettich_1979})  & $0.538$ & $0.53825$   & 2.00 & \DNconv \\
      \bottomrule
\end{tabular}
\caption{\small Performance of EXOTIC compared with GDA and AGP on the benchmark {convex--non-concave min-max} problems from \textsf{SIPAMPL} database. A \DNconv ~ sign means that either the algorithm did not converge or that the convergent point was very far from the optimal point.
}
\label{table:CSIP_benchmark}
\end{table}

\subsection{Application to Multi-player Games}\label{ssec:MultiplayerGames}
Consider a multi-player game comprising of \(N\) players. The (finite)set of actions of player \(i\in [N]\) is denoted \(A_i\). The joint action of all players is denoted by \(A = \times_{i\in [N]} A_i\). 
The cost experienced by player \(i\in [N]\) depends on the joint action of all players, and is denoted by \(c_i: A \rightarrow \mathbb{R}\).   
Every player \(i\in[N]\), selects a strategy \(x_i \in \Delta(A_i)\), which is a probability distribution over the set \(A_i.\)
The \emph{security value} of player \(i\) is computed as follows
\begin{align*}
\min_{x_i \in \Delta(A_i)}\;
\max_{x_{-i} \in \prod_{j\ne i}\Delta(A_j)} c_i(x_i, x_{-i}) = 
\sum_{a=(a_i)_{i\in [N]}\in A}\;\,\Bigg(\prod_{i\in [N]} x_k(a_k)\Bigg)\, c_i(a) .
\end{align*}
The security value, intuitively, denotes the best possible value that player \(i\) gets when other players are adversarial. 
Observe that the problem of computing security value is convex in \(x_i\), and non-concave (multi-linear) in \(x_{-i}\). 

To evaluate the performance of Algorithm \ref{alg: Algorithm_ComputingSol}, we create an instance of 3-player game. For every \(i\in [3],\) we take $A_i = \{\alpha,\beta\}$. Here, we focus on computing the security value of player \(1\), where its cost function, $c_1: A \rightarrow \mathbb{R}$, is provided in Table \ref{table:cost_structure}. 
\begin{table}[h]
\centering
\begin{subtable}[t]{0.45\textwidth}
\centering
\begin{tabular}{|l|l|l|c|}
\hline
$a_1$ & $a_2$ & $a_3$ & $c_1(a_1,a_2,a_3)$ \\ \hline
$\actZero$     & $\actZero$   & $\actZero$     & 2.1               \\ \hline
$\actZero$     & $\actZero$     & $\actOne$     & 1.2               \\ \hline
$\actZero$     & $\actOne$     & $\actZero$     & 1.5               \\ \hline
$\actZero$     & $\actOne$     & $\actOne$     & 1.6               \\ \hline
\end{tabular}
\end{subtable}%
\hspace{0.01\textwidth} 
\begin{subtable}[t]{0.45\textwidth}
\centering
\begin{tabular}{|l|l|l|c|}
\hline
$a_1$ & $a_2$ & $a_3$ & $c_1(a_1,a_2,a_3)$ \\ \hline
$\actOne$     & $\actZero$      & $\actZero$      & 1.5               \\ \hline
$\actOne$     & $\actZero$     & $\actOne$     & 0.4               \\ \hline
$\actOne$     & $\actOne$     & $\actZero$     & 1.5               \\ \hline
$\actOne$     & $\actOne$     & $\actOne$     & 1.7               \\ \hline
\end{tabular}
\end{subtable}
\caption{\small Cost Structure for the Example Problem}
\label{table:cost_structure}
\end{table}
For this problem, the solution returned by EXOTIC is  $x_1^{\algm} = [0.28, 0.72]$, $x_2^{\algm} = [0, 1]$, $x_3^{\algm} = [0 , 1]$, and $c_1(x_1^{\algm}, x_2^{\algm}, x_3^{\algm})= 1.672$ where $x_2^{\algm}$ and $x_3^{\algm}$ are the worst-case adversarial strategies of other players. Figure \ref{fig:Security_EXOTIC_vs_AGP_vs_GDA} compares the performance of EXOTIC with that of GDA and AGP. We observe that the EXOTIC is more secure than GDA and AGP in presence of strong adversaries. Moreover, the adversarial actions generated from AGP and GDA are benign and yield less cost if player uses the strategy from EXOTIC. 

\begin{figure}[h!]
	\centering
	\begin{subfigure}{.7\textwidth}
		\includegraphics[width=\textwidth]{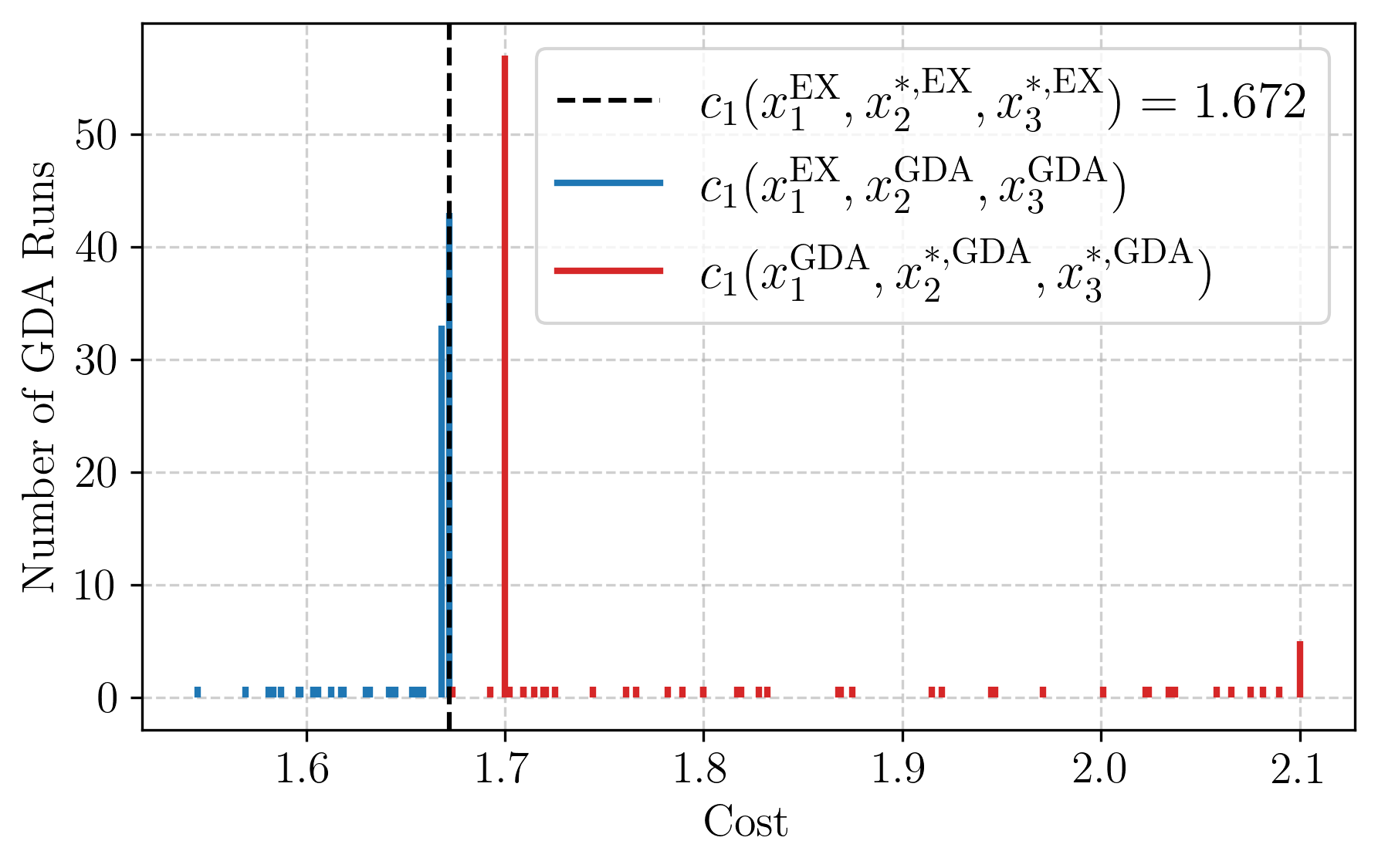}
	\end{subfigure}
	\begin{subfigure}{.7\textwidth}
		\includegraphics[width=\textwidth]{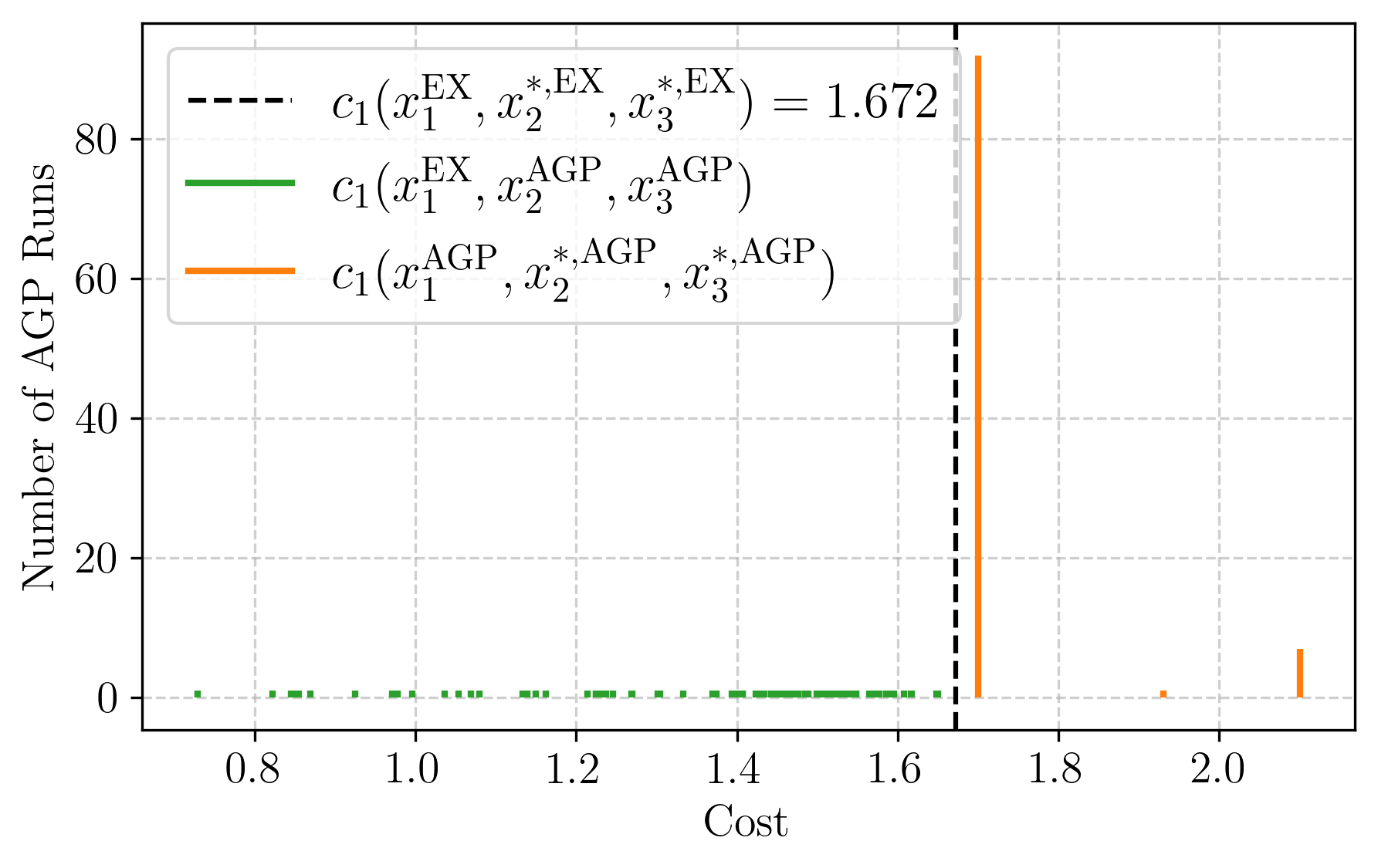}
	\end{subfigure}
	\caption{\small Comparison of converged strategies of Algorithm \ref{alg: Algorithm_ComputingSol} with $100$ different runs of GDA and AGP for computing the security value in 3-player game. The black dotted line in each plot corresponds to the security value computed by EXOTIC, which is $c_1(x_1^{\algm}, x_2^{\algm}, x_3^{\algm})= 1.672$. For every run, we compute the terms $c_1(x_1^{\algm}, x_2^{\text{Alg}}, x_3^{\text{Alg}})$ and $c_1(x_1^{\text{Alg}}, x_2^{\ast, \text{Alg}}, x_3^{\ast, \text{Alg}})$, where \((x_2^{\ast, \text{Alg}}, x_3^{\ast, \text{Alg}})\in \arg\max_{x_2, x_3} c_1(x_1^{\text{Alg}}, x_2, x_3)\) for each $\text{Alg} \in \{\text{GDA}, \text{AGP}\}$. The former term corresponds to the cost incurred by player $1$ when they play according to EXOTIC while their opponents play adversarial strategies according to $\text{Alg}$. The latter corresponds to the cost when player $1$ plays according to $\text{Alg}$, while their opponents play the worst-case adversarial strategies. The worst-case strategies for a given action of player $1$ were computed through brute force enumeration. For every run, we observe $c_1(x_1^{\algm}, x_2^{\text{Alg}}, x_3^{\text{Alg}}) \leq c_1(x_1^{\algm}, x_2^{\ast, \algm}, x_3^{\ast, \algm})$, which shows that the adversarial strategies computed by $\text{Alg}$ are benign compared to $\algm$.  Further, $c_1(x_1^{\algm}, x_2^{\ast, \algm}, x_3^{\ast, \algm}) \leq c_1(x_1^{\text{Alg}}, x_2^{\ast, \text{Alg}}, x_3^{\ast, \text{Alg}})$, which shows that EXOTIC is more secure than both GDA and AGP in presence of strong adversaries. 
} 
    \label{fig:Security_EXOTIC_vs_AGP_vs_GDA}
\end{figure}





\appendix

\section{Proof of Proposition \ref{prop: ProblemReformulate}}\label{app: ProofMinmaxToMaxmin}
{Before stating the proof, we define a mapping \(\mathbb{R}\times X \ni (t,\mbf{x}) = \tilde{\mbf{x}} \mapsto g(\tilde{\mbf{x}}) = t\in \mathbb{R}\) and a set-valued map \(\ySet^{\xSetDim+1}=\setSOO \ni \mbf{w} = [\mbf{y}_1^\top, \mbf{y}_2^\top, \dots, \mbf{y}_{\xSetDim+1}^\top]^\top \mapsto \setmap(\mbf{w})\) as 
\begin{equation}
    \label{eq:graph}
\setmap(\mbf{w}) = \{\tilde{\mbf{x}} = (t, \mbf{x}) \in \tRange \times \xSet ~ | ~ f(\mbf{x},\mbf{y}_i) \le t \quad \text{for} \quad i \in 1, 2, \dots, \xSetDim+1 \}.
\end{equation} }
We reformulate \eqref{eq:min_max} into the following semi-infinite problem (SIP):
        \begin{align}
\label{eq:min_max_to_CSIP}
&\min_{(t, \mbf{x}) \in \tRange \times \xSet}  \quad \left\{ t  \mid ~ f(\mbf{x},\mbf{y}) \le t \quad \text{for all} \quad \mbf{y} \in \ySet \right\}. 
\end{align}
Notice that \eqref{eq:min_max_to_CSIP} is convex SIP 
because the objective function is linear and therefore convex, and for every \(\mbf{y} \in \ySet\), the sub-level set \(\{(t, \mbf{x}) |  f(\mbf{x}, \mbf{y}) \leq t\} \subseteq \mathbb{R} \times \xSet\) is convex.

 
Also, note that \eqref{eq:min_max_to_CSIP} satisfies all the conditions required \cite[Theorem 1]{das2022near}. This ensures that the optimal value of \eqref{eq:min_max_to_CSIP} is same as that of the following problem:

{\small\begin{align}
\label{eq:CSIP_to_supinf}
\sup_{\mbf{w}\in \setSOO}\inf_{\tilde{\mbf{x}}\in \setmap(\mbf{w})} \innerObj(\tilde{\mbf{x}}) = \sup_{\mbf{w} = (\mbf{y}_i)_{i\in [\xSetDim+1]}\in \setSOO}& \left\{ \inf_{ (t, \mbf{x}) \in \tRange \times \xSet}  \Bigl\{ t \,\Big|\, f(\mbf{x},\mbf{y}_i) \le t ~ \forall i\in [\xSetDim+1]\Bigr\} \right\}.
\end{align}}

Finally, we claim that the ``sup-inf'' formulation in~\eqref{eq:CSIP_to_supinf} can be converted into a ``max-min'' formulation. To justify this conversion, we establish: 
\begin{enumerate}[labelwidth=*,align=left, widest=iii, leftmargin=*]
    \item[(F1)] For any $\mbf{w} \in \setSOO$, \(
    \inf_{\tilde{\mbf{x}} \in \setmap(\mbf{w})} \innerObj(\tilde{\mbf{x}}) = \min_{\tilde{\mbf{x}} \in \setmap(\mbf{w})} \innerObj(\tilde{\mbf{x}}),
    \) and 
    \item[(F2)] $\sup_{\mbf{w} \in \setSOO} \outerObj(\mbf{w}) = \max_{\mbf{w} \in \setSOO} \outerObj(\mbf{w})$.
\end{enumerate}

\noindent \textbf{(F1)} follows directly from \cite[Theorem 1.9]{rockafellar1998variational}, which states that if a function is lower semi-continuous, level-bounded, and proper, then the infimum is attained. Since the function \(\R \times \xSet \ni \tilde{\mbf{x}} \mapsto \innerObj(\tilde{\mbf{x}})\) is continuous, it is also lower semi-continuous and proper.
For any \(\alpha \in \mathbb{R}\), \(
    \left\{ \tilde{\mathbf{x}} \in \mathbb{R} \times \xSet \;\middle|\; \innerObj(\tilde{\mathbf{x}}) \leq \alpha \right\}
    \subseteq \bigcup_{\mathbf{y} \in \ySet} \left\{ \mathbf{x} \in \xSet \;\middle|\; \obj(\mathbf{x}, \mathbf{y}) \leq \alpha \right\},
\)
which is bounded since we assume, in \eqref{eq:min_max}, that \(\ySet\) is compact and, for every \(\mathbf{y} \in \ySet\), the function \(\R^{\xSetDim}\ni \mathbf{x} \mapsto \obj(\mathbf{x}, \mathbf{y})\) is coercive.

\noindent \textbf{(F2)} is also established using \cite[Theorem 1.9]{rockafellar1998variational} by verifying that \(\outerObj(\cdot)\) is lower semi-continuous, proper, and level-bounded. Recall that for any \(\mbf{w} \in \setSOO\), we have \(
\outerObj(\mbf{w}) = \inf_{\tilde{\mbf{x}} \in \setmap(\mbf{w})} \innerObj(\tilde{\mbf{x}}).
\)
We begin by noting that \(\outerObj(\cdot)\) is level-bounded. This follows immediately from the compactness of \(\setSOO\).
Next, we establish that \(\outerObj(\cdot)\) is lower semi-continuous and proper. Fix an arbitrary \(\mbf{w} = [\mbf{y}_1^\top, \mbf{y}_2^\top, \ldots, \mbf{y}_{\xSetDim+1}^\top]^\top \in \ySet^{\xSetDim+1} = \setSOO.\) Observe that \(
\outerObj(\mbf{w}) = \min_{\mbf{x} \in \xSet} F(\mbf{x}, \mbf{w})\),
where 
\(F(\mbf{x}, \mbf{w}) := \max_{i \in [\xSetDim+1]} f(\mbf{x}, \mbf{y}_i).
\)
Since \(\outerObj(\cdot)\) is the optimal value of a parametric nonlinear optimization problem, it suffices by \cite[Theorem 1.17]{rockafellar1998variational} to verify the following two conditions:
\begin{enumerate}[labelwidth=*,align=left, widest=iii, leftmargin=*]
    \item[(i)] The function \(\mbf{x} \mapsto F(\mbf{x}, \mbf{w})\) is proper and lower semi-continuous for every \(\mbf{w} \in \setSOO\); and 
    \item[(ii)] \(F(\mbf{x}, \mbf{w})\) is level-bounded in \(\mbf{x}\), locally uniformly in \(\mbf{w}\).
\end{enumerate}

\noindent (i) holds because \(\mbf{x} \mapsto f(\mbf{x}, \mbf{y})\) is continuous for every \(\mbf{y} \in \ySet\), and the pointwise maximum of finitely many continuous functions is continuous.

\noindent To establish (ii), it suffices to show that the set \(\bigcup_{\mathbf{w} \in \setSOO} \left\{ \mathbf{x} \in \mathcal{X} \mid F(\mathbf{x}, \mathbf{w}) \leq \alpha \right\}\) is bounded for every \(\alpha \in \mathbb{R}\). Indeed, for any \(\alpha \in \mathbb{R}\), the set \(
\cup_{\mathbf{w} \in \setSOO} \left\{ \mathbf{x} \in \xSet \mid F(\mathbf{x}, \mathbf{w}) \leq \alpha \right\}\) is contained in the set \( 
\cup_{\mathbf{y} \in \ySet} \left\{ \mathbf{x} \in \xSet \mid \obj(\mathbf{x}, \mathbf{y}) \leq \alpha \right\},
\)
which is bounded since, as assumed in \eqref{eq:min_max}, the set \(\ySet\) is compact and, for every \(\mathbf{y} \in \ySet\), the function \(\mathbf{x} \mapsto \obj(\mathbf{x}, \mathbf{y})\) is coercive.

{\section{Proof of Proposition \ref{prop:solution_and_stationary_correspondence}}\label{app: StationaryPoint}
Since \(F(\mbf{x},\mbf{w})\le \Phi(\mbf{x})\) for every \(\mbf{x}\in X\) and every \(\mbf{w}\in Y^{d_x+1}\), we have \(
    \min_{\mbf{x}\in X}F(\mbf{x},\mbf{w}^\ast)
    \le
    \min_{\mbf{x}\in X}\Phi(\mbf{x}).\)
By Proposition 2.1 and the optimality of \(\mbf{w}^\ast\),
\[
    \min_{\mbf{x}\in X}F(\mbf{x},\mbf{w}^\ast)
    =
    \max_{\mbf{w}\in Y^{d_x+1}}
    \min_{\mbf{x}\in X}F(\mbf{x},\mbf{w})
    =
    \min_{\mbf{x}\in X}\Phi(\mbf{x}).
\]

We first prove the result under condition (i). Let \(\bar{\mbf{x}}\) be the unique minimizer of \(\Phi\). Then
\[
    F(\bar{\mbf{x}},\mbf{w}^\ast)
    \le
    \Phi(\bar{\mbf{x}})
    =
    \min_{\mbf{x}\in X}\Phi(\mbf{x})
    =
    \min_{\mbf{x}\in X}F(\mbf{x},\mbf{w}^\ast).
\]
Since \(F(\bar{\mbf{x}},\mbf{w}^\ast)\) cannot be smaller than
\(\min_{\mbf{x}\in X}F(\mbf{x},\mbf{w}^\ast)\), equality holds. Hence \(\bar{\mbf{x}}\) is also a minimizer of \(F(\cdot,\mbf{w}^\ast)\). By uniqueness, \(\bar{\mbf{x}}=\mbf{x}^\ast\), and therefore \(\mbf{x}^\ast\) minimizes \(\Phi\).

Next suppose condition (ii) holds. Since
\(\mbf{x}^\ast\in\arg\min_{\mbf{x}\in X}F(\mbf{x},\mbf{w}^\ast)\), it holds that \(
    F(\mbf{x}^\ast,\mbf{w}^\ast)
    =
    \min_{\mbf{x}\in X}F(\mbf{x},\mbf{w}^\ast).\)
Using exactness at \(\mbf{x}^\ast\) and value equivalence,
\[
    \min_{\mbf{x}\in X}\Phi(\mbf{x})
    \le
    \Phi(\mbf{x}^\ast)
    =
    F(\mbf{x}^\ast,\mbf{w}^\ast)
    =
    \min_{\mbf{x}\in X}F(\mbf{x},\mbf{w}^\ast)
    =
    \min_{\mbf{x}\in X}\Phi(\mbf{x}).
\]
Thus all inequalities are equalities, and \(\Phi(\mbf{x}^\ast)=\min_{\mbf{x}\in X}\Phi(\mbf{x})\). Hence \(\mbf{x}^\ast\) is a global minimizer of the original minimax problem.

We now prove the stationary-point correspondence. First-order stationarity of \(\mbf{x}^\ast\) for \(\min_{\mbf{x}\in X}F(\mbf{x},\mbf{w}^\ast)\) means \(
    0\in
    \partial_{\mbf{x}}F(\mbf{x}^\ast,\mbf{w}^\ast)
    +N_X(\mbf{x}^\ast),\)
where \(N_X(\mbf{x}^\ast)\) denotes the normal cone of \(X\) at \(\mbf{x}^\ast\). By first-order exactness, \(    \partial_{\mbf{x}}F(\mbf{x}^\ast,\mbf{w}^\ast)
    =
    \partial \Phi(\mbf{x}^\ast).\)
Therefore, \(
    0\in
    \partial \Phi(\mbf{x}^\ast)+N_X(\mbf{x}^\ast),\)
which is the first-order stationarity condition for \(\min_{\mbf{x}\in X}\Phi(\mbf{x})\).

The converse follows identically. This completes the proof.

\section{Proof of Proposition  \ref{prop:SionMinimax}}\label{app: SionMinimax}
{
\begin{proof}
Define \(
\alpha := \min_{\mbf{x}\in \xSet}\max_{\mbf{y}\in \ySet} f(\mbf{x},\mbf{y}).\) By the minimax inequality, we now that \(\max_{\mbf{y}\in \ySet}\min_{\mbf{x}\in \xSet} f(\mbf{x},\mbf{y}) \le \alpha.\) Thus, the only non-trivial fact to establish is that \(\alpha\leq\max_{\mbf{y}\in \ySet}\min_{\mbf{x}\in \xSet} f(\mbf{x},\mbf{y}).\)

\noindent From Proposition 2.1, we know that \(
\alpha
=
\max_{\bar{\mbf{y}} \in \ySet^{\xSetDim+1}} 
\min_{\mbf{x}\in \xSet} \max_{i\in[\xSetDim+1]} f(\mbf{x},\mbf{y}_i). 
\)
Therefore, for any $\varepsilon > 0$, there exists \(
(\mbf{y}_1,\dots,\mbf{y}_{\xSetDim+1}) \in \ySet^{\xSetDim+1}\)
such that 
\[
\min_{\mbf{x}\in \xSet} \max_{i\in[\xSetDim+1]} f(\mbf{x},\mbf{y}_i)
\ge \alpha - \varepsilon.\]

Consider the following epigraph reformulation of \(
\min_{\mbf{x}\in \xSet} \max_{i\in[\xSetDim+1]} f(\mbf{x},\mbf{y}_i):\)
\begin{equation}\label{eq:reform}
\begin{aligned}
\min_{\mbf{x}\in \xSet,\; t\in \mathbb{R}} \quad  t \quad \quad 
\text{s.t.}\quad  f(\mbf{x},\mbf{y}_i)\le t,\qquad \forall \ i\in[\xSetDim+1].
\end{aligned}
\end{equation}
This is a convex optimization problem with a nonempty feasible set. Moreover, Slater’s condition holds: for any $\tilde{\mbf{x}}\in \xSet$, choosing $\tilde t > \max_i f(\tilde{\mbf{x}},\mbf{y}_i)$ yields a strictly feasible point.
The Lagrangian of \eqref{eq:reform} is
\[
L(\mbf{x},t,\lambda)
=
t+\sum_{i=1}^{\xSetDim+1}\lambda_i\bigl(f(\mbf{x},\mbf{y}_i)-t\bigr)
=
\sum_{i=1}^{\xSetDim+1}\lambda_i f(\mbf{x},\mbf{y}_i)
+
\Bigl(1-\sum_{i=1}^{\xSetDim+1}\lambda_i\Bigr)t,
\]
where $\lambda_i\ge 0$. Hence, the dual function is finite only when \(
\lambda \in \Delta_{\xSetDim+1}:=
\Bigl\{\lambda\in\mathbb{R}^{\xSetDim+1}_+:\sum_{i=1}^{\xSetDim+1}\lambda_i=1\Bigr\}.\)
Therefore, the dual problem is \(
\max_{\lambda\in\Delta_{\xSetDim+1}}
\min_{\mbf{x}\in \xSet}
\sum_{i=1}^{\xSetDim+1}\lambda_i f(\mbf{x},\mbf{y}_i).\)

By strong duality, there exists $\lambda^\star\in \Delta_{\xSetDim+1}$ such that \(
\min_{\mbf{x}\in \xSet} \max_{i\in[\xSetDim+1]} f(\mbf{x},\mbf{y}_i)\) \(
=
\min_{\mbf{x}\in \xSet}
\sum_{i=1}^{\xSetDim+1}\lambda_i^\star f(\mbf{x},\mbf{y}_i).\)
Hence, \(
\min_{\mbf{x}\in \xSet}
\sum_{i=1}^{\xSetDim+1}\lambda_i^\star f(\mbf{x},\mbf{y}_i)
\ge \alpha - \varepsilon.\)
Define \(
\hat{\mbf{y}} := \sum_{i=1}^{\xSetDim+1}\lambda_i^\star \mbf{y}_i.\) Note that \(\hat{\mbf{y}}\in Y\) as \(Y\) is a convex set.
By concavity of $f(\mbf{x},\cdot)$,
\[
f(\mbf{x},\hat{\mbf{y}})
=
f\Bigl(\mbf{x},\sum_{i=1}^{\xSetDim+1}\lambda_i^\star \mbf{y}_i\Bigr)
\ge
\sum_{i=1}^{\xSetDim+1}\lambda_i^\star f(\mbf{x},\mbf{y}_i),
\quad \forall \mbf{x} \in \xSet.
\]
Taking the minimum over $\mbf{x}\in \xSet$ gives
\[
\min_{\mbf{x}\in \xSet} f(\mbf{x},\hat{\mbf{y}})
\ge
\min_{\mbf{x}\in \xSet}
\sum_{i=1}^{\xSetDim+1}\lambda_i^\star f(\mbf{x},\mbf{y}_i)
\ge \alpha - \varepsilon.
\]
Thus, \(
\max_{\mbf{y}\in \ySet}\min_{\mbf{x}\in \xSet} f(\mbf{x},\mbf{y})
\ge \alpha - \varepsilon.\)
Since $\varepsilon>0$ is arbitrary, we conclude that \(
\max_{\mbf{y}\in \ySet}\min_{\mbf{x}\in \xSet} f(\mbf{x},\mbf{y})
\ge \alpha.\) This concludes the proof.
\end{proof}}
}

\section{Proof of Proposition \ref{prop: Num_Iterations}}\label{app: NumIterations}
    The number of iterations of the optimization solver \(\OPT\) in the initialization phase is equal to \(\kTree\hmax\).
    In the tree-expansion phase (\textsf{lines 2-11} in Algorithm 1), the number of calls to \(\OPT\) are bounded as follows 
    \begin{align}
        \label{eq: TreeExpansionCompute}&\sum_{h=1}^{\hmax}\sum_{m=1}^{\lfloor \hmax/h \rfloor}\kTree\left\lfloor \frac{\hmax}{hm} \right\rfloor  \leq \sum_{h=1}^{\hmax}\sum_{m=1}^{\lfloor \hmax/h \rfloor}\kTree \frac{\hmax}{hm} = \sum_{h=1}^{\hmax}\frac{\kTree\hmax}{h}\sum_{m=1}^{\lfloor \hmax/h \rfloor}\frac{1}{m}\notag \\
        &\leq \sum_{h=1}^{\hmax}\frac{\kTree\hmax}{h}\left(1+ \log\left(\lfloor \hmax/h \rfloor\right)\right) \leq \kTree(1+\log(\hmax))\sum_{h=1}^{\hmax}\frac{\hmax}{h}\\
        & = \kTree\hmax (1+\log(\hmax))^2.\nonumber
    \end{align}
Finally, the number of calls to \(\OPT\) in the re-evaluation phase are bounded as follows:
\begin{align}
    \sum_{p=0}^{\lfloor\log_2(\hmax)\rfloor} \lfloor \hmax/2 \rfloor \leq \frac{\hmax}{2}(1+\log_2(\hmax)). 
\end{align}
To conclude, the total number of calls to the optimization solver \(\OPT\) is bounded by \(
    \kTree\hmax+ \kTree\hmax (1+\log(\hmax))^2+ \frac{\hmax}{2}(1+\log_2(\hmax)).\)
Since we set \(    \hmax= \left\lfloor \frac{2n}{5\kTree(1+\log(n))^2}\right\rfloor\), we conclude that 
\begin{align*}
    \kTree\hmax+ \kTree\hmax (1+\log(\hmax))^2+ \frac{\hmax}{2}(1+\log_2(\hmax)) \leq \frac{2n}{5} + \frac{2n}{5} + \frac{n}{5} = n. 
\end{align*}

\section{Sufficient conditions for Assumptions \ref{assm: OPT_Convergence_Convex}-\ref{assm: OPT_Convergence_StronglyConvex}}\label{app:SufficientConditionsAssm12}
In this section, we provide examples of the iterative convex optimization solver \(\textsf{OPT}\) so that Assumptions \ref{assm: OPT_Convergence_Convex}-\ref{assm: OPT_Convergence_StronglyConvex} hold. 
For this section, fix arbitrary \(\mathbf{w} = [\mathbf{y}_1, \mathbf{y}_2, ..., \mathbf{y}_{d_x+1}]\in W.\) 

\subsection{Sufficient condition for Assumption \ref{assm: OPT_Convergence_Convex}} For any $\mathbf{z} \in X$ and $s \in \mathbb{N}$, we define  \(
\textsf{OPT}(\mathbf{w}, \mathbf{z}, s) = \frac{1}{s} \sum_{j=1}^{s} \mathbf{x}_j,\)
where the iterates $\{\mathbf{x}_j\}_{j=1}^s$ are generated as
\begin{align*}
    \mathbf{x}_j &= \textsf{Proj}_{X}\bigl(\mathbf{x}_{j-1} - \eta \hat{h}_j\bigr), \quad 
    \hat{h}_j \in \partial_{\mathbf{x}} F(\mathbf{x}_{j-1}, \mathbf{w}), \quad 
    \mathbf{x}_1 = \mathbf{z}.
\end{align*}
Here, $\partial_{\mathbf{x}} F(\cdot, \mathbf{w})$ denotes the subdifferential of $F(\cdot, \mathbf{w})$ with respect to $\mathbf{x}$, and $\textsf{Proj}_{X}$ is the Euclidean projection onto the set $X$.

The following convergence guarantee holds.

\begin{proposition}[\cite{bubeck2015convex}]
Suppose that $X$ is contained in a Euclidean ball of radius $R$. Furthermore, assume that for every $\mathbf{x} \in X$, $\mathbf{w} \in W$, and $\hat{h} \in \partial_{\mbf{x}} F(\mathbf{x}, \mathbf{w})$, we have $\|\hat{h}\| \leq L$. If we set $\eta = \frac{RL}{\sqrt{s}}$, then Assumption~\ref{assm: OPT_Convergence_Convex} holds with $\Cpgd = RL$ and $\decayCpgd = 0.5$.
\end{proposition}

\subsection{Sufficient condition for Assumption \ref{assm: OPT_Convergence_StronglyConvex}}
For any $\mathbf{z} \in X$ and $s \in \mathbb{N}$, we define \(
\textsf{OPT}(\mathbf{w}, \mathbf{z}, s) = \mathbf{x}_s,\)
where the iterates $\{\mathbf{x}_j\}_{j=1}^s$ are generated as proximal-point iteration
\begin{align*}
    \mathbf{x}_j
    = \mathrm{prox}_{\eta F(\cdot,\mathbf{w})}(\mathbf{x}_{j-1}) :=
    \arg\min_{\mathbf{x}\in X}
    \left\{
        F(\mathbf{x},\mathbf{w})
        +
        \frac{1}{2\eta}\|\mathbf{x}-\mathbf{x}_{j-1}\|^2
    \right\},
    \qquad
    \mathbf{x}_1 = \mathbf{z}.
\end{align*}
There are several instances when we can evaluate the proximal map in closed form \cite{parikh2014proximal}.
The following convergence guarantee holds.

\begin{proposition}[\cite{ryu2014stochastic}]
Suppose that $X$ is contained in a Euclidean ball of radius $R$. Moreover, for every $\mbf{y} \in Y$, the function $f(\cdot,\mbf{y})$ is $\mu$-strongly convex on $X$ for some $\mu > 0$ and $L$-Lipschitz, uniformly over $Y$. Then, for any $\eta > 0$, the proximal-point iterates satisfy Assumption~\ref{assm: OPT_Convergence_StronglyConvex} with \(\SCpgd = 2RL\) and \( \decaySCpgd = \frac{1}{1+\eta\mu}.\)
\end{proposition}

\section{Proof of Main Results}\label{app: ProofTheoremConvex}
Let us introduce a few notations. First, we define \(\optDepth_{h,p}\) to be the largest depth of a node that satisfies the following three conditions: (i) it contains an optimal point, (ii) it has been evaluated more than \(2^p\) times by the optimization solver \(\OPT\), and (iii) the tree has been expanded up to depth \(h\). Second, for any \(h\in [\hmax],\) we define \(\optNode_h\in [\kTree^h]\) to be such that \(\optSol \in \setSOO_{h,\optNode_h}\), where \(\optSol\in\arg\max_{\mbf{w}\in \setSOO}\outerObj(\mbf{w})\). 
    
\subsection{Material Required to Prove Theorem \ref{thm: MainConvexResult}}\label{ssec: FirstThm}

\begin{lemma}\label{lem: OptDepthConvex}
    Suppose that Assumptions~\ref{assm: OPT_Convergence_Convex} and \ref{assm: Func_Part_Assm} hold. Then, for any pair \((h, p)\) with \(h \in [\hmax]\) and \(p \in \left[\left\lfloor \log_2(\hmax/h) \right\rfloor\right]\), the following implication holds: if  
\begin{align*}
    \constDecay \rateDecay^h \geq \Cpgd (2^p)^{-\decayCpgd}, \quad \text{and} \quad 
    \frac{\hmax}{2h \cdot 2^p} \geq C \rateDecay^{-d h}, \quad \text{then}~\optDepth_{h,p} = h.
\end{align*}
\end{lemma}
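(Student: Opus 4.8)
The plan is to prove the lemma by induction on the depth $h$, after first observing that the two hypotheses propagate to all shallower levels with the \emph{same} $p$. Indeed, for $j\le h$ one has $\constDecay\rateDecay^{j}\ge \constDecay\rateDecay^{h}\ge \Cpgd(2^p)^{-\decayCpgd}$ and, since $C>1$ and $\rateDecay\in\,]0,1[$, also $\frac{\hmax}{2j\cdot 2^p}\ge \frac{\hmax}{2h\cdot 2^p}\ge C\rateDecay^{-dh}\ge C\rateDecay^{-dj}$; hence whenever both conditions hold at $(h,p)$ they hold at every $(j,p)$ with $j\le h$. Consequently I may run a single induction on $h$ in which the statement at each smaller depth is available for the same $p$, and it suffices to establish the inductive step at a generic level.

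The first ingredient is a two–sided control of the optimistic values. Since $\OPT$ returns a point of $\setmap(\varSOO_{h,i})$, Assumption~\ref{assm: OPT_Convergence_Convex} gives $\outerObj(\varSOO_{h,i})\le \estSOO_{h,i}\le \outerObj(\varSOO_{h,i})+\Cpgd\,\num_{h,i}^{-\decayCpgd}$. For the optimal cell, $\varSOO_{h,\optNode_h}\in\setSOO_{h,\optNode_h}\ni\optSol$, so Assumption~\ref{assm: Func_Part_Assm} yields $\estSOO_{h,\optNode_h}\ge \outerObj(\varSOO_{h,\optNode_h})\ge \outerObj(\optSol)-\constDecay\rateDecay^{h}$. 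On the other hand, any \emph{far} cell $i\notin\mathcal{N}_h(2\constDecay\rateDecay^{h})$ satisfies $\outerObj(\varSOO_{h,i})<\outerObj(\optSol)-2\constDecay\rateDecay^{h}$, so as soon as it carries at least $2^p$ evaluations the first hypothesis forces $\estSOO_{h,i}<\outerObj(\optSol)-2\constDecay\rateDecay^{h}+\Cpgd(2^p)^{-\decayCpgd}\le \outerObj(\optSol)-\constDecay\rateDecay^{h}\le \estSOO_{h,\optNode_h}$. Thus a sufficiently-evaluated far cell is strictly dominated by the optimal cell in the optimistic ordering; this is exactly how the accuracy hypothesis protects the comparison used in the selection rule (\textsf{line 4}).

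The core of the argument is a counting claim for the depth-$h$ loop: the optimal cell $(h,\optNode_h)$ is opened within $\mathcal{O}(C\rateDecay^{-dh})$ of the $\lfloor\hmax/h\rfloor$ openings. The intended mechanism is that at each opening $m$ in this range the selection threshold $\lfloor\hmax/(hm)\rfloor$ is at least $2^p$ (a consequence of the second hypothesis together with $\lceil C\rateDecay^{-dh}\rceil\le 2C\rateDecay^{-dh}$), so every qualifying cell has at least $2^p$ evaluations and, by the domination above, every qualifying far cell is beaten by $(h,\optNode_h)$; hence only the at most $C\rateDecay^{-dh}$ cells of $\mathcal{N}_h(2\constDecay\rateDecay^{h})$ (Definition~\ref{def: NearOptDim}) can be selected ahead of it. That $(h,\optNode_h)$ is itself eligible early enough is where the induction enters: the inductive hypothesis places its parent $(h-1,\optNode_{h-1})$ among the first $\mathcal{O}(C\rateDecay^{-d(h-1)})$ openings at depth $h-1$, so the inherited count $\num_{h,\optNode_h}=\lfloor\hmax/((h-1)m)\rfloor$ clears the threshold. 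Reading the same inheritance off at the present level, and using $\rateDecay^{-dh}=\rateDecay^{-d}\rateDecay^{-d(h-1)}$ with $\rateDecay^{-d}>1$ together with the slack $1/2$ in the second hypothesis, yields $\num_{h,\optNode_h}>2^p$; since the tree reaches only depth $h$, this gives $\optDepth_{h,p}=h$. The base case $h=1$ is immediate, since initialization evaluates every depth-$1$ cell $\hmax\ge 2C\rateDecay^{-d}\,2^p>2^p$ times.

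The step I expect to be the genuine obstacle is the bookkeeping forced by the \emph{harmonic} sampling schedule $\lfloor\hmax/(hm)\rfloor$. Before $(h,\optNode_h)$ becomes eligible, the large threshold at small $m$ can admit far cells—children of cells opened early at shallower depths—that are opened on ``wasted'' steps, and the subtlety is to show that these, together with the genuine near-optimal competitors, still total only $\mathcal{O}(C\rateDecay^{-dh})$ and do not compound into a factor growing like $(1-\rateDecay^{d})^{-1}$ as the depth increases. Controlling this accumulation, while carrying the floors and ceilings and the gap between $\rateDecay^{-d(h-1)}$ and $\rateDecay^{-dh}$ so that the constants $2$ and $1/2$ in the two hypotheses supply exactly the slack needed to close the induction at every level, is the crux; the optimistic estimate and the near-optimality count themselves are routine once the schedule is under control.
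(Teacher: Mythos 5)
Your proposal reproduces the paper's skeleton---induction on depth with the same \(p\), the two-sided sandwich on \(\estSOO_{h,i}\) coming from Assumption~\ref{assm: OPT_Convergence_Convex} together with feasibility of the iterates, and a count of near-optimal cells against Definition~\ref{def: NearOptDim}---but it never proves the one claim that carries the lemma: that \((h,\optNode_h)\) is in fact expanded, at a step whose threshold is still at least \(2^p\), before the admissible openings run out. You state this as ``the optimal cell is opened within \(\mathcal{O}(C\rateDecay^{-dh})\) openings,'' sketch a strengthened induction that tracks the rank \(m\) of the parent's opening so that the inherited count \(\num_{h,\optNode_h}=\lfloor\hmax/((h-1)m)\rfloor\) clears later thresholds, and then explicitly defer the bookkeeping---the ``wasted'' openings taken by highly-evaluated far cells before \((h,\optNode_h)\) becomes eligible, and the possible \((1-\rateDecay^{d})^{-1}\) accumulation of these across depths---as an unresolved ``crux.'' That deferred step is not a routine verification: the recursion implicit in your sketch, roughly \(m_{h}\lesssim\frac{(h-1)m_{h-1}}{h}+1+C\rateDecay^{-dh}\), telescopes into sums of the form \(\sum_{j\le h}jC\rateDecay^{-dj}\), and whether the constants \(2\) and \(1/2\) appearing in the two hypotheses absorb the resulting factor is exactly what must be checked and is not checked. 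As submitted, the proposal is a plan whose central counting argument is flagged rather than closed, so it does not prove the lemma.

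For contrast, the paper closes this step by a shorter contradiction argument that dispenses with your rank-tracking induction. Keeping only the weak inductive hypothesis that \((h'-1,\optNode_{h'-1})\) was expanded giving its children at least \(2^p\) evaluations, it defines \(\bar m\) as the largest integer with \(2^p\le\lfloor\hmax/(h'\bar m)\rfloor\) and supposes \((h',\optNode_{h'})\) is not among the first \(\bar m\) nodes expanded at depth \(h'\). Then every node expanded at those steps was preferred to \((h',\optNode_{h'})\) in \textsf{line 4}, hence by the same sandwich inequalities you derived it lies in \(\mathcal{N}_{h'}(2\constDecay\rateDecay^{h'})\); since these \(\bar m\) nodes are distinct and \(\bar m+1\ge\frac{\hmax}{2h2^p}+1\ge C\rateDecay^{-dh}+1\), this contradicts Definition~\ref{def: NearOptDim}. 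Note that the comparison invoked there is precisely the eligibility issue you isolate: the paper applies the selection rule using only \(\num_{h',\optNode_{h'}}\ge 2^p\), without establishing candidacy of the optimal node at every step \(m\le\bar m\) the way your strengthened induction attempts to. Your instinct about where the difficulty sits is therefore sound, but your write-up neither completes its own (harder) route nor supplies the paper's (shorter) one; to repair it, either carry out the accumulation estimate you describe and show the hypotheses tolerate it, or adopt the paper's contradiction argument and justify the preference comparison it rests on.
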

\begin{proof}
{The proof is inspired by the proof of \cite[Lemma 7]{bartlett2019simple}, which analyzes the case of noisy function evaluations. We adapt their proof to our setting, where each function evaluation is performed using \OPT.}

Fix a tuple \((h, p)\) that satisfies the conditions of this lemma. We prove by induction that \(\optDepth_{h',p} = h'\) for every \(h' \in [h]\).

\textbf{Base Case:} At depth \(1\), the initialization step in Algorithm~\ref{alg: Algorithm_ComputingSol} ensures that all child nodes are evaluated, so \(\optDepth_{1,p} = 1\).

\textbf{Inductive Step:} Fix an arbitrary \(h' \in [2:h]\) and assume the inductive hypothesis \(\optDepth_{h'-1,p} = h'-1\). We aim to show that \(\optDepth_{h',p} = h'\). By definition, the condition \(\optDepth_{h'-1, p} = h'-1\) ensures that the algorithm has expanded the node \((h'-1, i^\ast_{h'-1})\) with at least \(2^p\) evaluations. Therefore, it suffices to show that the algorithm also expands the node \((h', i^\ast_{h'})\) with at least \(2^p\) evaluations.

Let \(\bar{m}\) be the largest positive integer such that \(
2^p \leq \left\lfloor \frac{\hmax}{h' \bar{m}} \right\rfloor.\)
We claim that \(1 \leq \bar{m} \leq \left\lfloor \hmax/h' \right\rfloor\). First, since \(p \leq \lfloor \log_2(\hmax/h) \rfloor\) by design, it follows that \(2^p \leq \lfloor \hmax/h \rfloor\), hence such \(\bar{m} \geq 1\) exists. Moreover, since \(2^p \geq 1\), we must have \(\bar{m} \leq \lfloor \hmax/h' \rfloor\). Thus, the claimed bounds on \(\bar{m}\) hold.

This ensures that Algorithm~\ref{alg: Algorithm_ComputingSol} expands at least \(\bar{m}\) nodes at depth \(h'\) in \textsf{lines 3--10} of Algorithm \ref{alg: Algorithm_ComputingSol}. Let \(I_{\bar{m}}\) denote the set of the first \(\bar{m}\) nodes at depth \(h'\) that are evaluated in \textsf{lines 3--10} of Algorithm \ref{alg: Algorithm_ComputingSol}. By the definition of \(\bar{m}\), each node in \(I_{\bar{m}}\) is evaluated at least \(2^p\) times since \(
\left\lfloor \frac{\hmax}{h' \bar{m}} \right\rfloor \geq 2^p.\)
The proof is complete if we show that \((h', i^\ast_{h'}) \in I_{\bar{m}}\). 

Suppose for contradiction that \((h', i^\ast_{h'}) \notin I_{\bar{m}}\).
For any \((h',i)\in I_{\bar{m}},\) we claim that
\begin{align}\label{eq: Claim1}
\outerObj(\mathbf{w}_{h',i}) + \constDecay \rateDecay^{h'} \geq \estSOO_{h',i}.
\end{align}
Indeed, this holds because
\begin{align*}
\outerObj(\mathbf{w}_{h',i}) + \constDecay \rateDecay^{h'}
&\overset{(a)}{\geq} \outerObj(\mathbf{w}_{h',i}) + \constDecay \rateDecay^{h} \overset{(b)}{\geq} \outerObj(\mathbf{w}_{h',i}) + \Cpgd (2^p)^{-\decayCpgd}\\
& \overset{(c)}{\geq} \outerObj(\mathbf{w}_{h',i}) + \Cpgd (\num_{h',i})^{-\decayCpgd} \overset{(d)}{\geq} \estSOO_{h',i},
\end{align*}
where \((a)\) uses \(h' \leq h\) and \(\rateDecay \in (0,1)\), \((b)\) follows from the condition in the statement of Lemma \ref{lem: OptDepthConvex}, \((c)\) uses the fact that \(\num_{h',i} \geq 2^p\), and \((d)\) follows from Assumption~\ref{assm: OPT_Convergence_Convex}.
 
Next, we claim that
\begin{align}\label{eq: Claim2}
\estSOO_{h',\optNode_{h'}} \geq \outerObj(\optSol) - \constDecay \rateDecay^{h'}.
\end{align}
Indeed, \(
\estSOO_{h',\optNode_{h'}} = F(\OPT(\varSOO_{h',\optNode_{h'}},  \optSOO_{h',\optNode_{h'}}, \num_{h',\optNode_{h'}}),\varSOO_{h',\optNode_{h'}}) \overset{(a)}{\geq} \min_{{\mbf{x}\in X}} F({\mbf{x}}, \varSOO_{h',\optNode_{h'}}) = \outerObj(\mathbf{w}_{h',i^\ast_{h'}}) \overset{(b)}{\geq} \outerObj(\mbf{w}^\ast) - \constDecay \rateDecay^{h'},\)
where \((a)\) follows since \(\OPT\) provides a feasible solution (Assumption~\ref{assm: OPT_Convergence_Convex}), and \((b)\) follows from Assumption~\ref{assm: Func_Part_Assm}.
Combining \eqref{eq: Claim1} and \eqref{eq: Claim2}, we get:
\begin{align}\label{eq: Compare1}
\outerObj(\mathbf{w}_{h',i}) + \constDecay \rateDecay^{h'}
\overset{(a)}{\geq} \estSOO_{h',i}
\overset{(b)}{\geq} \estSOO_{h',\optNode_{h'}}
\overset{(c)}{\geq} \outerObj(\mbf{w}^\ast) - \constDecay \rateDecay^{h'},
\end{align}
where \((a)\) is from \eqref{eq: Claim1}, \((b)\) uses the assumption \((h',\optNode_{h'}) \notin I_{\bar{m}}\), and \((c)\) is from \eqref{eq: Claim2}.

From \eqref{eq: Compare1}, each \((h',i)\in I_{\bar{m}}\) is \(2\constDecay \rateDecay^{h'}\)-suboptimal. Since \((h',\optNode_{h'})\) is also \(2\constDecay \rateDecay^{h'}\)-suboptimal (in fact, only \(\constDecay \rateDecay^{h'}\)-suboptimal), it holds that  \(
\mathcal{N}_{h'}(2\constDecay \rateDecay^{h'}) \geq \bar{m} + 1.
\)
We now lower-bound \(\bar{m}\). By definition 
\[
2^p \geq \frac{\hmax}{h'(\bar{m} + 1)} \geq \frac{\hmax}{2h' \bar{m}} \quad \Rightarrow \quad \bar{m} \geq \frac{\hmax}{2h' 2^p}.
\]

To conclude,
\begin{align*}
\mathcal{N}_{h'}(2\constDecay \rateDecay^{h'}) 
\geq \bar{m} + 1 \geq \frac{\hmax}{2h2^p} + 1 
\overset{(a)}{\geq} C \rateDecay^{-d h} + 1 
\overset{(b)}{\geq} C \rateDecay^{-d h'} + 1,
\end{align*}
where  \((a)\) uses the lemma's assumption,
and \((b)\) uses \(h' \leq h\) and \(\rateDecay \in (0,1)\).

However, by the definition of near-optimality dimension (Definition~\ref{def: NearOptDim}), it must hold that \(
\mathcal{N}_{h'}(2\constDecay \rateDecay^{h'}) \leq C \rateDecay^{-d h'},
\)
which yields a contradiction. Therefore, we conclude that \((h', \optNode_{h'}) \in I_{\bar{m}}\) for every \(h' \in [h]\), and hence \(\optDepth_{h',p} = h'\).
\end{proof}

\subsubsection{Proof of Theorem \ref{thm: MainConvexResult}}
In order to bound the optimality gap, we claim that,
\begin{align}
    \regret(\iter) \leq \constDecay \rateDecay^{\optDepth_{\hmax,p} + 1} + \Cpgd \hmax^{-\decayCpgd} \quad \text{for all} \ p \in [\log_2(\hmax)]. 
\end{align}
Indeed, for every \(p \in [P]\),
{\small\begin{align}\label{eq: GapBound}
     &\regret(\iter) = \outerObj(\optSol) - \outerObj(\returnSol) \overset{(a)}{\leq} \outerObj(\optSol) + \Cpgd \left(\hmax/2\right)^{-\decayCpgd} - \returnVal \notag \\ 
     &\overset{(b)}{\leq} \outerObj(\optSol) + \Cpgd \left(\hmax/2\right)^{-\decayCpgd} - \estSOO_{h_p, i_p} \overset{(c)}{\leq} \outerObj(\optSol) + \Cpgd \left(\hmax/2\right)^{-\decayCpgd} - \estSOO_{\bar{h}(p), i^\ast_{\bar{h}(p)}} \notag \\ 
     &\overset{(c)}{=} \outerObj(\optSol) + \Cpgd \left(\hmax/2\right)^{-\decayCpgd} - F\left(\OPT\left(\varSOO_{\bar{h}(p), i^\ast_{\bar{h}(p)}}, \optSOO_{\bar{h}(p), i^\ast_{\bar{h}(p)}}, \num_{\bar{h}(p), i^\ast_{\bar{h}(p)}}\right),\varSOO_{\bar{h}(p), i^\ast_{\bar{h}(p)}}\right) \notag \\ 
     &\overset{(d)}{\leq} \outerObj(\optSol) + \Cpgd \left(\hmax/2\right)^{-\decayCpgd} - \outerObj(\varSOO_{\bar{h}(p), i^\ast_{\bar{h}(p)}}) \overset{(e)}{\leq} \Cpgd \left(\hmax/2\right)^{-\decayCpgd} + \constDecay \rateDecay^{\bar{h}(p)},
\end{align}}
where \(\bar{h}(p) := \optDepth_{\hmax, p}\). Here, \((a)\) follows from Assumption~\ref{assm: OPT_Convergence_Convex}, \((b)\) holds because \(\returnVal \geq \estSOO_{h_p, i_p}\) (see \textsf{line 16} in Algorithm~\ref{alg: Algorithm_ComputingSol}), 
\((c)\) follows from the definition of \(\estSOO_{\bar{h}(p), i^\ast_{\bar{h}(p)}}\) in \textsf{line 7} in Algorithm \ref{alg: Algorithm_ComputingSol}, \((d)\) uses the definition of \(\outerObj(\cdot)\), and \((e)\) uses Assumption~\ref{assm: Func_Part_Assm}.

From \eqref{eq: GapBound}, we observe that bounding the optimality gap reduces to lower bounding \(\max_{p \in [P]} \bar{h}(p)\). To do so, we aim to construct a tuple \((\hd, \pd) \in \mathbb{Z}^2\) that satisfies the conditions of Lemma~\ref{lem: OptDepthConvex}. This yields:
\begin{align}
    \max_{p \in [P]} \bar{h}(p) 
    \geq \bar{h}(\pd) = \optDepth_{\hmax, \pd} \overset{(a)}{\geq} \optDepth_{\hd, \pd} 
    \overset{(b)}{=} \hd,
\end{align}
where \((a)\) follows from the fact that \(\optDepth_{\cdot, p}\) is non-decreasing in its first argument for every fixed \(p\), and \((b)\) follows from Lemma~\ref{lem: OptDepthConvex}.
Combining this with \eqref{eq: GapBound}, we conclude:
\begin{align}\label{eq: RegretFinalBound}
    \regret(\iter) \leq \Cpgd \left(\hmax/2\right)^{-\decayCpgd} + \constDecay \rateDecay^{\hd}.
\end{align}

Thus, to achieve the lowest possible optimality gap, it suffices to construct a pair \((\hd, \pd) \in \mathbb{Z}^2\) with the largest possible value of \(\hd\) satisfying the conditions of Lemma~\ref{lem: OptDepthConvex}.

Towards that goal, we first construct a tuple \((\tilde{h}, \tilde{p}) \in \mathbb{R}^2\) such that 
\begin{align}
    \constDecay \rateDecay^{\tilde{h}} = 2^{\decayCpgd}\Cpgd \cdot 2^{-\tilde{p}\decayCpgd}, \quad \text{and} \quad 
    \hmax = 2\tilde{h}C \cdot 2^{\tilde{p}} \cdot \rateDecay^{-d\tilde{h}}. \label{eq: tp}
\end{align}

Rearranging these equations, we obtain:
\begin{align}\label{eq: thtp_exp}
    \tilde{h} = \frac{1}{\left(d+\frac{1}{\decayCpgd}\right)\log\left(\frac{1}{\rateDecay}\right)} \cdot \mathcal{W}\left( \frac{\hmax \left(d+\frac{1}{\decayCpgd}\right)\log\left(\frac{1}{\rateDecay}\right)}{4C\Cpgd^{1/\decayCpgd}\constDecay^{-1/\decayCpgd}} \right), \quad 
    \tilde{p} = \frac{1}{\decayCpgd}\log_2\left(\frac{2^{\decayCpgd}\Cpgd}{\constDecay\rateDecay^{\tilde{h}}}\right),
\end{align}
where \(\mathcal{W}(\cdot)\) is the Lambert \(W\)-function.

We now distinguish two cases based on whether condition \eqref{eq: ConditionnotSatisfied} is satisfied or not:
\paragraph{Case I (Condition~\eqref{eq: ConditionTheorem} holds)}
This corresponds to the regime where \(\constDecay \rateDecay^{\tilde{h}} \leq 2^{\decayCpgd} \Cpgd\), i.e., \(\tilde{p} \geq 0\). Set \(\hd = \lfloor \tilde{h} \rfloor\) and \(\pd = \lfloor \tilde{p} \rfloor\). We now verify that this choice of \((\hd, \pd)\) satisfies the requirements of Lemma~\ref{lem: OptDepthConvex}.

First, since the Lambert \(W\)- function is non-negative on \(\mathbb{R}_+\), we have \(\tilde{h} \geq 0\), and hence \(\hd \geq 0\). Moreover, using~\eqref{eq: tp}, we observe that \(\hmax = 2\tilde{h}C \cdot 2^{\tilde{p}} \rateDecay^{-d\tilde{h}} \geq 2\tilde{h} \geq 2\lfloor \tilde{h} \rfloor = 2\hd\), so that \(\hd \leq \hmax\).

Next, we claim that \(\pd \leq \lfloor \log_2(\hmax / \hd) \rfloor\). Indeed, since \(\hmax = 2\tilde{h}C \cdot 2^{\tilde{p}} \rateDecay^{-d\tilde{h}} \geq \tilde{h} \cdot 2^{\tilde{p}} \geq \hd \cdot 2^{\pd}\), the inequality follows.

We also verify that \(\constDecay \rateDecay^{\hd} \geq \Cpgd \cdot 2^{-\pd \decayCpgd}\). Since \(\hd \leq \tilde{h}\), we have \(\constDecay \rateDecay^{\hd} \geq \constDecay \rateDecay^{\tilde{h}} = \Cpgd \cdot 2^{-(\tilde{p} - 1)\decayCpgd} \geq \Cpgd \cdot 2^{-\pd \decayCpgd}\).

Finally, we verify that \(\frac{\hmax}{2\hd \cdot 2^{\pd}} \geq C \rateDecay^{-d\hd}\). From~\eqref{eq: tp}, we note that \(\frac{\hmax}{2\hd \cdot 2^{\pd}} \geq \frac{\hmax}{2\tilde{h} \cdot 2^{\tilde{p}}} = C \rateDecay^{-d\tilde{h}} \geq C \rateDecay^{-d\hd}\), where the last inequality follows from the monotonicity of \(\rateDecay^{-dh}\) in \(h\).

Thus, all conditions of Lemma~\ref{lem: OptDepthConvex} are verified. The desired bound in~\eqref{eq: GapCondition_satisfied} then follows by substituting \(\hd\) into~\eqref{eq: RegretFinalBound}.

\paragraph{Case II (Condition~\eqref{eq: ConditionTheorem} does not hold)}
This corresponds to the regime where \(\constDecay \rateDecay^{\tilde{h}} > 2^{\decayCpgd} \Cpgd\), i.e., \(\tilde{p} \leq 0\). Define \(\hat{h}\) by the relation \(\hmax = 2\hat{h}C \rateDecay^{-d\hat{h}}\), so that \(\hat{h} = \frac{1}{d \log(1/\rateDecay)} \cdot \mathcal{W}\left( \frac{d \log(1/\rateDecay) \cdot \hmax}{2C} \right)\). Set \(\pd = 0\) and \(\hd = \lfloor \hat{h} \rfloor\). We now verify that this choice of \((\hd, \pd)\) satisfies the conditions of Lemma~\ref{lem: OptDepthConvex}.

First, since the Lambert-\(W\) function is non-negative on \(\mathbb{R}_+\), we have \(\hat{h} \geq 0\), and thus \(\hd \geq 0\). Moreover, since \(\hmax = 2\hat{h} C \rateDecay^{-d\hat{h}} \geq 2\hat{h} \geq 2\lfloor \hat{h} \rfloor = 2\hd\), it follows that \(\hd \leq \hmax\).

Next, we verify that \(\constDecay \rateDecay^{\hd} \geq \Cpgd \cdot 2^{-\pd \decayCpgd}\). Since \(\pd = 0\), it suffices to show that \(\constDecay \rateDecay^{\hd} \geq \Cpgd\). Observe that \(\Cpgd \leq \constDecay \rateDecay^{\tilde{h}} \cdot 2^{-\decayCpgd} \leq \constDecay \rateDecay^{\tilde{h}} \leq \constDecay \rateDecay^{\hat{h}} \leq \constDecay \rateDecay^{\hd}\), where we have used the fact that \(\hat{h} \leq \tilde{h}\). To verify this, note that
\[
2\hat{h} C \rateDecay^{-d\hat{h}} = \hmax = 2\tilde{h} C \rateDecay^{-d\tilde{h}} \left( \frac{2^{\decayCpgd} \Cpgd}{\constDecay \rateDecay^{\tilde{h}}} \right)^{1/\decayCpgd} \leq 2\tilde{h} C \rateDecay^{-d\tilde{h}},
\]
and since the function \(x \mapsto 2Cx \rateDecay^{-dx}\) is increasing on \(\mathbb{R}_+\), it follows that \(\hat{h} \leq \tilde{h}\).

Next, we verify that \(\frac{\hmax}{2\hd \cdot 2^{\pd}} \geq C \rateDecay^{-d\hd}\). Since \(\pd = 0\), this reduces to \(\frac{\hmax}{2\hd} \geq C \rateDecay^{-d\hd}\), which follows from \(\frac{\hmax}{2\hd} \geq \frac{\hmax}{2\hat{h}} = C \rateDecay^{-d\hat{h}} \geq C \rateDecay^{-d\hd}\).
Thus, all conditions of Lemma~\ref{lem: OptDepthConvex} are satisfied. 

The desired bound in~\eqref{eq: ConditionnotSatisfied} then follows by substituting \(\hd\) into~\eqref{eq: RegretFinalBound} and noting that \(2^{\decayCpgd} \Cpgd \hmax^{-\decayCpgd} \leq \constDecay \rateDecay^{\hd}\). Indeed, we have \(\Cpgd (\hmax/2)^{-\decayCpgd} \leq \constDecay \rateDecay^{\tilde{h}} \cdot \hmax^{-\decayCpgd} \leq \constDecay \rateDecay^{\tilde{h}} \leq \constDecay \rateDecay^{\hd}\), where the first inequality uses the assumption that \(2^{\decayCpgd} \Cpgd < \constDecay \rateDecay^{\tilde{h}}\). This concludes the proof.

\subsection{Material Required to Prove Theorem \ref{thm: MainStronglyConvexResult}}\label{ssec: SecondThm}
\begin{lemma}\label{lem: OptDepth_StronglyConvex}
    Suppose that Assumptions~\ref{assm: OPT_Convergence_StronglyConvex} and ~\ref{assm: Func_Part_Assm} hold. Then, for any pair \((h, p)\) with \(h \in [\hmax]\) and \(p \in \left[\left\lfloor \log_2(\hmax/h) \right\rfloor\right]\), the following implication holds: if
    \begin{align*}
        \constDecay \rateDecay^h \geq \SCpgd\decaySCpgd^{2^p},\quad  \text{and} \quad \hmax/(2h2^p)  \geq C\rateDecay^{-dh},
    \end{align*}
    then \(\optDepth_{h,p} = h.\)
\end{lemma}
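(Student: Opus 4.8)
The plan is to adapt, essentially verbatim, the inductive argument used to establish Lemma~\ref{lem: OptDepthConvex}, replacing the polynomial convergence guarantee of Assumption~\ref{assm: OPT_Convergence_Convex} with the exponential one of Assumption~\ref{assm: OPT_Convergence_StronglyConvex}. Concretely, I would fix a pair \((h,p)\) satisfying the two displayed hypotheses and prove by induction on \(h' \in [h]\) that \(\optDepth_{h',p} = h'\). The base case \(\optDepth_{1,p}=1\) is immediate from the initialization phase, in which every depth-one child is evaluated by \(\OPT\). For the inductive step I would assume \(\optDepth_{h'-1,p}=h'-1\), so that the optimal node \((h'-1,\optNode_{h'-1})\) has already been expanded with at least \(2^p\) solver calls, and reduce the goal to showing that the optimal node \((h',\optNode_{h'})\) is among the first \(\bar m\) nodes expanded at depth \(h'\), where \(\bar m\) is the largest integer with \(2^p \le \lfloor \hmax/(h'\bar m)\rfloor\). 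As in the convex case, one checks \(1 \le \bar m \le \lfloor \hmax/h'\rfloor\) and denotes by \(I_{\bar m}\) this set of first \(\bar m\) nodes, each of which is evaluated at least \(2^p\) times.

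The only place where the strong-convexity hypothesis enters is the analog of Claim~\eqref{eq: Claim1}: for every \((h',i)\in I_{\bar m}\) I must show \(\outerObj(\mbf{w}_{h',i}) + \constDecay\rateDecay^{h'} \ge \estSOO_{h',i}\). Here I would reproduce the same chain of inequalities, but now with the exponential tail: use \(h'\le h\) and \(\rateDecay\in\,]0,1[\) to pass to \(\constDecay\rateDecay^{h}\), then invoke the lemma's first hypothesis \(\constDecay\rateDecay^{h}\ge \SCpgd\decaySCpgd^{2^p}\), then the monotonicity \(\decaySCpgd^{2^p}\ge \decaySCpgd^{\num_{h',i}}\) (valid because \(\num_{h',i}\ge 2^p\) and \(\decaySCpgd\in\,]0,1[\) make \(s\mapsto\decaySCpgd^{s}\) decreasing), and finally Assumption~\ref{assm: OPT_Convergence_StronglyConvex} to bound \(\estSOO_{h',i}\le \outerObj(\mbf{w}_{h',i}) + \SCpgd\decaySCpgd^{\num_{h',i}}\). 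This is a drop-in substitution of \(\SCpgd\decaySCpgd^{2^p}\) for the quantity \(\Cpgd(2^p)^{-\decayCpgd}\) appearing in Lemma~\ref{lem: OptDepthConvex}, and the direction of this exponential monotonicity is the only point that needs care.

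The remainder transfers unchanged because it never references the solver's convergence rate. The analog of Claim~\eqref{eq: Claim2}, namely \(\estSOO_{h',\optNode_{h'}} \ge \outerObj(\optSol) - \constDecay\rateDecay^{h'}\), follows only from feasibility of the \(\OPT\) output (guaranteed in Assumption~\ref{assm: OPT_Convergence_StronglyConvex}) together with Assumption~\ref{assm: Func_Part_Assm}. Assuming for contradiction that \((h',\optNode_{h'})\notin I_{\bar m}\) then forces every node in \(I_{\bar m}\) to be \(2\constDecay\rateDecay^{h'}\)-suboptimal, whence \(\mathcal{N}_{h'}(2\constDecay\rateDecay^{h'}) \ge \bar m + 1 \ge \hmax/(2h'2^p)+1\); the lemma's second hypothesis \(\hmax/(2h2^p)\ge C\rateDecay^{-dh}\) together with \(h'\le h\) upgrades the right side to \(C\rateDecay^{-dh'}+1\), contradicting the near-optimality dimension bound of Definition~\ref{def: NearOptDim}. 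I expect no genuine obstacle here: the entire difficulty of the original lemma is already resolved in Lemma~\ref{lem: OptDepthConvex}, so the real work is merely to confirm that the counting/contradiction step is insensitive to the convergence rate—indeed its second hypothesis is identical across the two regimes—and that the exponential rate slots correctly into the single inequality that distinguishes the two cases.
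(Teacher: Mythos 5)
Your proposal is correct and is essentially identical to the paper's own proof: the paper likewise runs the induction from Lemma~\ref{lem: OptDepthConvex} verbatim, modifying only the chain of inequalities in the analog of~\eqref{eq: Claim1} (its steps $(b)$--$(d)$ use the hypothesis $\constDecay\rateDecay^{h}\ge\SCpgd\decaySCpgd^{2^p}$, the monotonicity $\decaySCpgd^{2^p}\ge\decaySCpgd^{\num_{h',i}}$ from $\num_{h',i}\ge 2^p$, and Assumption~\ref{assm: OPT_Convergence_StronglyConvex}), while the feasibility-based Claim~\eqref{eq: Claim2SC} and the counting/contradiction step via Definition~\ref{def: NearOptDim} carry over unchanged. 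You correctly pinpointed that the exponential-rate substitution and its monotonicity direction are the only points requiring care.
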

\begin{proof}
    The proof is similar to that of Lemma \ref{lem: OptDepthConvex}. 
Fix a tuple \((h, p)\) that satisfies the conditions of this lemma. We prove by induction that \(\optDepth_{h',p} = h'\) for every \(h' \in [h]\).

\textbf{Base Case:} This is similar as in proof of Lemma \ref{lem: OptDepthConvex}.

\textbf{Inductive Step:} Fix an arbitrary \(h' \in [2:h]\) and assume the inductive hypothesis \(\optDepth_{h'-1,p} = h'-1\). Similar to Lemma \ref{lem: OptDepthConvex}, it suffices to show that the algorithm also expands the node \((h', i^\ast_{h'})\) with at least \(2^p\) evaluations.


Let \(\bar{m}\) be the largest positive integer such that \(
2^p \leq \left\lfloor \frac{\hmax}{h' \bar{m}} \right\rfloor.\)
We note that \(1 \leq \bar{m} \leq \left\lfloor \hmax/h' \right\rfloor\). 
This ensures that Algorithm~\ref{alg: Algorithm_ComputingSol} expands at least \(\bar{m}\) nodes at depth \(h'\) in \textsf{lines 3--10} of Algorithm \ref{alg: Algorithm_ComputingSol}. Let \(I_{\bar{m}}\) denote the set of the first \(\bar{m}\) nodes at depth \(h'\) that are evaluated in \textsf{lines 3--10} of Algorithm \ref{alg: Algorithm_ComputingSol}. By the definition of \(\bar{m}\), each node in \(I_{\bar{m}}\) is evaluated at least \(2^p\) times.
The proof will be complete if we show that \((h', i^\ast_{h'}) \in I_{\bar{m}}\). Suppose for contradiction that \((h', i^\ast_{h'}) \notin I_{\bar{m}}\).
For any \((h',i)\in I_{\bar{m}},\) we claim that
\begin{align}\label{eq: Claim1SC}
\outerObj(\mathbf{w}_{h',i}) + \constDecay \rateDecay^{h'} \geq \estSOO_{h',i}.
\end{align}
Indeed, this holds because
\begin{align*}
\outerObj(\mathbf{w}_{h',i}) + \constDecay \rateDecay^{h'}
&\overset{(a)}{\geq} \outerObj(\mathbf{w}_{h',i}) + \constDecay \rateDecay^{h} \overset{(b)}{\geq} \outerObj(\mathbf{w}_{h',i}) + \SCpgd\decaySCpgd^{2^p} \\
& \overset{(c)}{\geq} \outerObj(\mathbf{w}_{h',i}) + \SCpgd\decaySCpgd^{\num_{h',i}}\overset{(d)}{\geq} \estSOO_{h',i},
\end{align*}
where \((a)\) uses \(h' \leq h\) and \(\rateDecay \in (0,1)\), \((b)\) follows from the hypothesis of Lemma \ref{lem: OptDepth_StronglyConvex}, \((c)\) uses the fact that \(\num_{h',i} \geq 2^p\), and \((d)\) follows from Assumption~\ref{assm: OPT_Convergence_StronglyConvex}.
Next, we claim that
\begin{align}\label{eq: Claim2SC}
\estSOO_{h',\optNode_{h'}} \geq \outerObj(\optSol) - \constDecay \rateDecay^{h'}.
\end{align}
This follows similar to \eqref{eq: Claim2}. 
Combining \eqref{eq: Claim1SC} and \eqref{eq: Claim2SC}, we get:
\begin{align}\label{eq: Compare1SC}
\outerObj(\mathbf{w}_{h',i}) + \constDecay \rateDecay^{h'}
\overset{(a)}{\geq} \estSOO_{h',i}
\overset{(b)}{\geq} \estSOO_{h',\optNode_{h'}}
\overset{(c)}{\geq} \outerObj(\mbf{w}^\ast) - \constDecay \rateDecay^{h'}.
\end{align}

From \eqref{eq: Compare1SC}, each \((h',i)\in I_{\bar{m}}\) is \(2\constDecay \rateDecay^{h'}\)-suboptimal. Since \((h',\optNode_{h'})\) is also \(2\constDecay \rateDecay^{h'}\)-suboptimal (in fact, only \(\constDecay \rateDecay^{h'}\)-suboptimal), we have:
\begin{align*}
\mathcal{N}_{h'}(2\constDecay \rateDecay^{h'}) 
\geq \bar{m} + 1 \geq \frac{\hmax}{2h2^p} + 1 
\overset{(a)}{\geq} C \rateDecay^{-d h} + 1 
\overset{(b)}{\geq} C \rateDecay^{-d h'} + 1,
\end{align*}
which follows similar to the proof of Lemma \ref{lem: OptDepthConvex}. 
However, by the definition of near-optimality dimension (Definition~\ref{def: NearOptDim}), we have \(
\mathcal{N}_{h'}(2\constDecay \rateDecay^{h'}) \leq C \rateDecay^{-d h'},\)
which yields a contradiction. Therefore, we conclude that \((h', \optNode_{h'}) \in I_{\bar{m}}\) for every \(h' \in [h]\), and hence \(\optDepth_{h',p} = h'\).
    
\end{proof}

\subsubsection{Proof of Theorem \ref{thm: MainStronglyConvexResult}}
In order to bound the optimality gap, we claim that
\begin{align}
    \regret(\iter) \leq \constDecay\rateDecay^{\optDepth_{\hmax,p}+1} + \SCpgd\decaySCpgd^{\hmax/2} \quad \text{for all} \ p \in \left[P\right]. 
\end{align}
This follows analogous to \eqref{eq: GapBound}, Assumption \ref{assm: OPT_Convergence_StronglyConvex} in place of Assumption \ref{assm: OPT_Convergence_Convex}. 

We note that to obtain the desired optimality gap bound, it is sufficient to lower bound \(\max_{p\in [P]}\bar{h}(p)\). In order to lower bound this term, we hope to find a tuple \((\hd, \pd)\in \N^2\) such that the conditions posed in Lemma \ref{lem: OptDepth_StronglyConvex} are satisfied. Consequently, we can lower bound \(
    \max_{p\in [P]}\optDepth_{\hmax,p} \geq \optDepth_{\hmax, \pd} \underset{(a)}{\geq} \optDepth_{\hd,\pd} \underset{(b)}{=}{\hd}, \)
where \((a)\) is due  to the fact that \(\optDepth_{\cdot, p}\) is a non-decreasing function, for every \(p\), and \((b)\) is due to Lemma \ref{lem: OptDepth_StronglyConvex}. Consequently, have
\begin{align}\label{eq: RegretFinalBoundSC}
    \regret(\iter) \leq \SCpgd\decaySCpgd^{\hmax/2} + \constDecay\rateDecay^{h^\dagger}. 
\end{align}
 Thus, to obtain a lower optimality gap we must construct \((\hd, \pd)\in \N^2\) with largest values of \(\hd\) that satisfy the conditions in Lemma \ref{lem: OptDepth_StronglyConvex}. 
Towards this goal, we first construct a tuple \((\tilh, \tp)\in \mathbb{R}^2\) that satisfy the following conditions:
\begin{align}\label{eq: thSC}
    \constDecay \rateDecay^{\tilde{h}} = \SCpgd\decaySCpgd^{2^{\tp-1}}, \quad  \text{and} \quad 
    \hmax = 2\tilde{h}C2^{\tilde{p}}\rateDecay^{-d\tilde{h}}.
\end{align}
Moving forward, without loss of generality, we assume that \(\SCpgd = \constDecay.\) This can always be done by selecting the maximum of these two values. Using the definition of Lambert W-function in \eqref{eq: thSC}, we obtain 
\begin{align}\label{eq: thtp_expSC}
    \tilh = \frac{1}{\frac{d}{2}\log(1/\rateDecay)} \mathcal{W}\left( \frac{d}{2}\log(1/\rateDecay)\sqrt{\frac{\hmax}{4C}} \right), \quad 
    \tp = \log_2\left(2\tilh\frac{\log(1/\rateDecay)}{\log(1/\decaySCpgd)}\right),
\end{align}
where \(\mathcal{W}(\cdot)\) is the Lambert W-function.
We now distinguish two cases based on whether condition \eqref{eq: ConditionTheoremSC} is satisfied or not:

\paragraph{Case I (Condition~\eqref{eq: ConditionTheoremSC} holds)} This corresponds to the regime when  \(\tp \geq 0\), or equivalently, \(\tilh \geq \frac{\log(1/\decaySCpgd)}{2\log(1/\rateDecay)}\). Set \(\hd = \lfloor \tilh \rfloor\) and \(\pd = \lfloor \tp \rfloor\). We now verify that this choice of \((\hd, \pd)\) satisfies the requirements of Lemma~\ref{lem: OptDepth_StronglyConvex}.

{
Similar to the Case I in the proof of Theorem \ref{thm: MainConvexResult}, we can show that \(0 \leq \hd \leq \hmax,\) and \(\pd \leq \lfloor \log_2(\hmax / \hd) \rfloor\).
We also verify that \(\constDecay \rateDecay^{\hd} \geq \Cpgd 2^{-\pd \decayCpgd}\). Since we assumed that \(\Cpgd = \constDecay,\) it suffices to show that \(\rateDecay^{\hd} \geq 2^{-\pd \decayCpgd}\). Using \eqref{eq: thSC} and the fact that \(\hd \leq \tilde{h}\), we obtain \(\rateDecay^{\hd} \geq \rateDecay^{\tilh} = \decaySCpgd^{2^{\tp-1}} \geq \decaySCpgd^{2^{\pd}}\).
Finally, we verify that \(\frac{\hmax}{2\hd \cdot 2^{\pd}} \geq C \rateDecay^{-d\hd}\). From~\eqref{eq: thSC}, we note that \(\frac{\hmax}{2\hd 2^{\pd}} \geq \frac{\hmax}{2\tilh 2^{\tp}} = C\rateDecay^{-d\tilh} \geq C\rateDecay^{-d\hd}\), where the last inequality follows from the monotonicity of \(\rateDecay^{-dh}\) in \(h\).
Thus, all conditions of Lemma~\ref{lem: OptDepth_StronglyConvex} are verified. The desired bound in~\eqref{eq: GapCondition_satisfiedSC} then follows by substituting \(\hd\) into~\eqref{eq: RegretFinalBoundSC}.
}



\paragraph{Case II (Condition~\eqref{eq: ConditionTheoremSC} does not hold)} 
{This corresponds to the regime where \(\tilh < \frac{\log(1/\decaySCpgd)}{2\log(1/\rateDecay)}\), i.e., \(\tp < 0\). Define \(\hat{h}\) by the relation \(\frac{\hmax}{2\hat{h}} = C\rateDecay^{-d\hat{h}}\), so that \(\hat{h} = \frac{1}{d\log(1/\rateDecay)} \mathcal{W}\left( \frac{d\log(1/\rateDecay)\hmax}{2C} \right)\). Set \(\pd = 0\) and \(\hd = \lfloor \hat{h} \rfloor\). We now verify that this choice of \((\hd, \pd)\) satisfies the conditions of Lemma~\ref{lem: OptDepth_StronglyConvex}.

Similar to the Case II in the proof of Theorem \ref{thm: MainConvexResult}, we can show that \(0 \leq \hd \leq \hmax,\) and \(\pd \leq \lfloor \log_2(\hmax / \hd) \rfloor\).
Next, we verify that \(\constDecay \rateDecay^{\hd} \geq \SCpgd\decaySCpgd^{2^{\pd}}\). Since we have assumed that \(\SCpgd = \constDecay\) and have set \(\pd = 0,\) it suffices to show that \(\rateDecay^{\hd} \geq \decaySCpgd\). Observe that \(\decaySCpgd \leq \rateDecay^{2\tilh} \leq \constDecay\rateDecay^{\tilh} \leq \constDecay\rateDecay^{\hat{h}} \leq \constDecay\rateDecay^{\hd}\), where we have used the fact that \(\hat{h} \leq \tilde{h}\). To verify this, note that
\[
2\hat{h} C \rateDecay^{-d\hat{h}} = \hmax \underset{\eqref{eq: thSC}}{=} 2\tilh C \rateDecay^{-d\tilh} \cdot \left(2\tilh \frac{\log(1/\rateDecay)}{\log(1/\decaySCpgd)}\right) \leq 2\tilde{h} C \rateDecay^{-d\tilde{h}},
\]
where the last inequality holds because \(2\tilh \frac{\log(1/\rateDecay)}{\log(1/\decaySCpgd)} \leq 1\) in the regime of Case II.
Since the function \(x \mapsto 2Cx \rateDecay^{-dx}\) is increasing on \(\mathbb{R}_+\), it follows that \(\hat{h} \leq \tilde{h}\).
Next, we verify that \(\SCpgd\decaySCpgd^{\hmax/2} \leq \constDecay\rateDecay^{\hd}\). Since \(\pd = 0\), this reduces to \(\frac{\hmax}{2\hd} \geq C \rateDecay^{-d\hd}\), which follows from \(\frac{\hmax}{2\hd} \geq \frac{\hmax}{2\hat{h}} = C \rateDecay^{-d\hat{h}} \geq C \rateDecay^{-d\hd}\).
Thus, all conditions of Lemma~\ref{lem: OptDepth_StronglyConvex} are satisfied.
The desired bound in~\eqref{eq: ConditionnotSatisfiedSC} then follows by substituting \(\hd\) into~\eqref{eq: RegretFinalBoundSC} and noting that \(\SCpgd\decaySCpgd^{\hmax/2}  \leq \constDecay\rateDecay^{\hd}.\) Indeed, \(\SCpgd\decaySCpgd^{\hmax/2} \leq \constDecay\rateDecay^{\tilh \hmax} \leq \constDecay\rateDecay^{\tilh} \leq \constDecay\rateDecay^{\hd},\)  where the first inequality follows when condition \eqref{eq: ConditionTheoremSC} does not hold as  \(\decaySCpgd < \rateDecay^{2\tilh}\). This concludes the proof.

}

\section{Preliminaries on Lambert W-function}\label{app: LambertW}
Consider the equation \( y = x\exp(x) \), where \( x, y \in \mathbb{R} \). The solution to this equation is given by \( x = \lamW(y) \), where \(\lamW\) denotes the Lambert \(W\) function. By definition, this function satisfies \( y = \lamW(y)\exp(\lamW(y)) \).

\begin{lemma}\label{lem: W_properties}
Let \(\lamW(\cdot)\) denote the Lambert \(W\) function. Then:
\begin{enumerate}[labelwidth=*,align=left, widest=iii, leftmargin=*]
    \item[(i)] If \( y \geq 0 \), then \( \lamW(y) \geq 0 \).
    \item[(ii)] \( \lamW(\cdot) \) is a strictly increasing function on the domain \( [0, +\infty [ \).
    \item[(iii)] If \( y > e \), then \( \lamW(y) \geq \log\left({y} / {\log(y)}\right) \).
    \item[(iv)] {\color{blue}\(\lim_{x\downarrow 0}\lamW(x)/x = 1\)}
\end{enumerate}
\end{lemma}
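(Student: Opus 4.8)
The plan is to work directly from the defining relation \(y = \lamW(y)\exp(\lamW(y))\) and to reduce all three claims to elementary properties of the auxiliary map \(\R \ni x \mapsto g(x) := x\exp(x)\), of which \(\lamW\) is (on the relevant range) the inverse; here and throughout \(\log\) denotes the natural logarithm. The key preliminary observation I would record is that \(g'(x) = (1+x)\exp(x)\), so \(g\) is strictly increasing on \([-1,+\infty[\) with \(g(0)=0\) and \(g(x)\to+\infty\) as \(x\to+\infty\); moreover \(g(x) < 0\) for \(x \in [-1,0[\).

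For part (i), fix \(y \geq 0\). Since \(g\) is strictly increasing on its principal domain \([-1,+\infty[\) and satisfies \(g(x) < 0\) for \(x\in[-1,0[\) while \(g(0)=0\), the unique preimage \(x\) solving \(g(x) = y \geq 0\) must satisfy \(x \geq 0\); that is, \(\lamW(y)\geq 0\). Part (ii) is then immediate: \(\lamW\) is, by construction, the inverse of the strictly increasing bijection \(g\colon [0,+\infty[\,\to\,[0,+\infty[\), and the inverse of a strictly increasing function is itself strictly increasing.

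The substance of the lemma is part (iii), which I would establish in two steps. Writing \(w := \lamW(y)\), the defining identity \(w\exp(w) = y\) gives, upon taking logarithms (legitimate since \(w > 0\) by part (i) and \(y > 0\)), the transformed relation \(w + \log w = \log y\), equivalently \(w = \log y - \log w\). To finish I need the auxiliary bound \(w < \log y\). This follows because \(\lamW(e) = 1\) (as \(1\cdot\exp(1) = e\)) together with the monotonicity from part (ii) forces \(w = \lamW(y) > 1\) whenever \(y > e\); consequently \(y = w\exp(w) > \exp(w)\), and taking logarithms yields \(w < \log y\). Since \(\log y > 1 > 0\) for \(y > e\), applying the increasing map \(\log(\cdot)\) to \(w < \log y\) gives \(\log w < \log\log y\), and substituting this into \(w = \log y - \log w\) produces \(w > \log y - \log\log y = \log\!\bigl(y/\log y\bigr)\), which is the claimed inequality (in fact with a strict sign).

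The only delicate point, and the step I expect to require the most care, is part (iii): one must track domains and signs so that every logarithm is taken of a positive quantity, namely that \(w > 1\) and \(\log y > 1\) for \(y > e\), ensuring both \(\log w\) and \(\log\log y\) are well defined and that the chain of monotone substitutions is valid. Parts (i) and (ii) are routine consequences of the monotonicity of \(g\) and should need no more than the observations above.
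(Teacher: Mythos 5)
Your proof is correct, and in fact establishes part (iii) with strict inequality. Note that the paper itself provides no proof of this lemma: it is stated in Appendix~\ref{app: LambertW} as a collection of standard facts about the Lambert $W$ function, so there is no argument to compare yours against. Your derivation is the standard one: parts (i)--(ii) via monotonicity of $x \mapsto x\exp(x)$ on the principal branch, and part (iii) via the identity $w + \log w = \log y$ combined with the a priori bounds $1 < w < \log y$ for $y > e$ (the latter following from $\lamW(e)=1$ and monotonicity). All sign and domain checks you flag as delicate are handled correctly, so the argument is complete as written.
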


\section{Proof of Proposition \ref{prop: SufficientConditins}}\label{app: SufficientConditions}
In this section, we provide a proof of Proposition \ref{prop: SufficientConditins}. Before that, we present an intermediate result that will be used in the proof.

\begin{lemma}\label{lem: LipschitzG}
Under Assumption \ref{assm: BigX}, the function \(\outerObj(\cdot)\) is \(L_G-\)Lipschitz, where \(L_G=\sqrt{d_x+1}L_f\). 
\end{lemma}

\begin{proof}
First, we show that under Assumption \ref{assm: BigX}, the mapping \(W \ni \mbf{w}\mapsto F(\mbf{x},\mbf{w})\) is Lipschitz. Fix \(\mbf{x}\in X\). For any \(\mbf{w} = (\mbf{y}_i)_{i\in [d_x+1]}, \bar{\mbf{w}} = (\bar{\mbf{y}}_i)_{i\in [d_x+1]}\in W,\)
\begin{align*}
&|F(\mbf{x},\mbf{w})-F(\mbf{x},\bar{\mbf{w}})|
=
\left|
\max_{i\in[d_x+1]} f(\mbf{x},\mbf{y}_i)
-
\max_{i\in[d_x+1]} f(\mbf{x},\bar{\mbf{y}}_i)
\right| \\
&\le
\max_{i\in[d_x+1]}
|f(\mbf{x},\mbf{y}_i)-f(\mbf{x},\bar{\mbf{y}}_i)| \le
L_f \max_{i\in[d_x+1]} \|\mbf{y}_i-\bar{\mbf{y}}_i\|.
\end{align*}
Hence, for every \(\mbf{x}\in X\), \(
F(\mbf{x},\mbf{w})
\le
F(\mbf{x},\tilde{\mbf{w}})
+
L_f \max_{i\in[d_x+1]} \|\mbf{y}_i-\tilde{\mbf{y}}_i\|.\)
Taking infimum over \(\mbf{x}\in X\) on both sides gives
\[
G(\mbf{w})
=
\inf_{\mbf{x}\in X} F(\mbf{x},\mbf{w})
\le
\inf_{\mbf{x}\in X}
\left(
F(\mbf{x},\tilde{\mbf{w}})
+
L_f \max_{i\in[d_x+1]} \|\mbf{y}_i-\tilde{\mbf{y}}_i\|
\right).
\]
Since the second term is independent of \(\mbf{x}\), this becomes
\[
G(\mbf{w})
\le
G(\tilde{\mbf{w}})
+
L_f \max_{i\in[d_x+1]} \|\mbf{y}_i-\tilde{\mbf{y}}_i\|.
\]
Interchanging the roles of \(\mbf{w}\) and \(\tilde{\mbf{w}}\), we similarly obtain
\[
G(\tilde{\mbf{w}})
\le
G(\mbf{w})
+
L_f \max_{i\in[d_x+1]} \|\mbf{y}_i-\tilde{\mbf{y}}_i\|.
\]
Combining the two inequalities yields \(
|G(\mbf{w})-G(\tilde{\mbf{w}})|
\le
L_f \max_{i\in[d_x+1]} \|\mbf{y}_i-\tilde{\mbf{y}}_i\|,\)
which proves the claim.
\end{proof}

\begin{proof}[Proof of Proposition \ref{prop: SufficientConditins}]
From Lemma \ref{lem: LipschitzG}, we know that under Assumption \ref{assm: BigX}, \(G(\cdot)\) is Lipschitz. Therefore, for any \(h\in [h_{\max}],\) \(|G(\mbf{w}-G(\mbf{w}^\ast))|\leq L_G\|\mbf{w}-\mbf{w}^\ast\|\) for any \(\mbf{w}\in W_{h,i_h^\ast}\), where \(i_h^\ast\) denotes the index such that \(\mbf{w}^\ast\in W_{h,i_h^\ast}.\) By the geometric condition \eqref{eq: ConditionGeometric} the diameter of \(W_{h,i^\ast_h}\) is atmost \(\alpha\beta^h,\) which implies that the function variation over this cell is bounded by \(L_G\alpha\beta^h.\) This satisfies the requirement in Assumption \ref{assm: Func_Part_Assm}.
\end{proof}
\section{Proof of Proposition \ref{prop: HandCraftExample}}\label{app: proof_handcraft}
To prove Proposition~4.1, we begin by reformulating~(4.1) by introducing the aggregate variables \(
    X := \mathbf{1}^\top \mathbf{x},
    Y := \mathbf{1}^\top \mathbf{y}.\)
Under this transformation, (4.1) reduces to the one-dimensional form
\begin{align*}
    \min_{X \in [-c \xSetDim, \; c \xSetDim]} \;
    \max_{Y \in [-\ySetDim, \; \ySetDim]} \;
    \hat{f}(X, Y)
    = -Y^{3} + XY .
\end{align*}
For any fixed \(X \in [-c \xSetDim, c \xSetDim]\), define 
\(    Y^\ast(X) := \underset{Y \in [-\ySetDim, \, \ySetDim]}{\arg\max} \hat{f}(X, Y).
\)
We claim that \begin{align*} Y^\ast(X) = \begin{cases} -\ySetDim & \text{if } X \in [-c\xSetDim,\, 0.75 \ySetDim^2[, \\ \{-\ySetDim,\, 0.5 \ySetDim\} & \text{if } X = 0.75 \ySetDim^2, \\ \sqrt{X/3} & \text{if } X \in\, ]0.75 \ySetDim^2,\, c\xSetDim[. 
\end{cases} 
\end{align*}
Since the mapping \(Y \mapsto \hat{f}(X,Y)\) is cubic in \(Y\), the maximizer must lie either at the boundary points or at a stationary point. That is, for any \(X \in [-c\xSetDim, c\xSetDim],\)
\[
    Y^\ast(X) \subseteq \big\{-\ySetDim, \, \ySetDim, \, \sqrt{X/3}, \, -\sqrt{X/3}\big\}.
\]
Define the mappings
\begin{align*} [-c\xSetDim, c\xSetDim] \ni X \mapsto F_1(X) &= \hat{f}(X, -\ySetDim) = \ySetDim^3 - \ySetDim X, \\ [-c\xSetDim, c\xSetDim] \ni X \mapsto F_2(X) &= \hat{f}(X, \ySetDim) = -\ySetDim^3 + \ySetDim X, \\ [0, c\xSetDim] \ni X \mapsto F_3(X) &= \hat{f}\left(X, \sqrt{\frac{X}{3}}\right) = \frac{2}{3\sqrt{3}}X^{3/2}, \\ [0, c\xSetDim] \ni X \mapsto F_4(X) &= \hat{f}\left(X, -\sqrt{\frac{X}{3}}\right) = -\frac{2}{3\sqrt{3}}X^{3/2}. \end{align*}
Note that \(F_1\) and \(F_4\) are strictly decreasing, whereas \(F_2\) and \(F_3\) are strictly increasing.

To determine \(Y^\ast(X)\), find the maximum value of  
\(\{F_1(X), F_2(X), F_3(X), F_4(X)\}\). We consider the following cases to find the best candidate value: 

\begin{itemize}
    \item \textbf{Case I (\(X \le 0\)):} Only \(F_1\) and \(F_2\) are defined, with \(F_1(X) \geq F_2(X)\). Hence \(Y^\ast(X) = -\ySetDim\).
    
    \item \textbf{Case II (\(0 < X < 0.75\ySetDim^2\)):} One can verify that 
    \begin{align*}
    &F_1(X) > 0.25\ySetDim^3,~ F_2(X) < -0.25\ySetDim^3, \\  &F_3(X) < 0.25 \ySetDim^3,~  -0.25 \ySetDim^3 < F_4(X) < 0. \end{align*}
    Therefore, 
\(        F_1(X) > \max\{F_2(X), F_3(X), F_4(X)\},\)
    so again \(Y^\ast(X) = -\ySetDim\).

    \item  \textbf{Case III (\(0.75\ySetDim^2 = X\)):} Using the similar analysis as Case II, we conclude that \(F_1(X) = F_3(X) > \max\{F_2(X), F_4(X)\}.\) Therefore, \(Y^\ast(X) \in \{ -\ySetDim, \sqrt{X/3}\} = \{ -\ySetDim, 0.5\ySetDim\}.\)
    
    \item \textbf{Case IV (\(0.75\ySetDim^2 < X < 3\ySetDim^2\)):} We note that 
    \begin{align*} 
    &F_1(X) < 0.25\ySetDim^3,~ -0.25\ySetDim^3 < F_2(X) < 2\ySetDim^2,\\ & 0.25\ySetDim^3 < F_3(X) < 2\ySetDim^3, ~ F_4(X) < -0.25 \ySetDim^3. \end{align*}
    Thus, \(\max\{F_2(X), F_3(X)\} > \max\{F_1(X), F_4(X)\}.\) Moreover, using the fact that \(F_2\) and \(F_3\) are strictly increasing with \(F_2(3\ySetDim^2) = F_3(3\ySetDim^2)\), we conclude that  \(F_3(X) > \max\{F_1(X), F_2(X), F_4(X)\}.\) It follows that \(Y^\ast(X) = \sqrt{X/3}\).
    
    \item \textbf{Case V (\( X = 3\ySetDim^2\)):} Using the similar analysis as Case IV, we conclude that \(F_2(X) = F_3(X) > \max\{F_1(X), F_4(X)\}.\)  Since \(\sqrt{X/3}= \ySetDim\), we conclude that \(Y^\ast(X) = \ySetDim.\)

    \item \textbf{Case VI (\(X > 3 \ySetDim^2\)):} We note that \begin{align*} 
    &F_1(X) < -2\ySetDim^3, ~ F_2(X) > 2\ySetDim^2,\\ & F_3(X) > 2\ySetDim^3, ~ F_4(X) < -2 \ySetDim^3. 
    \end{align*} Moreover, we note that \(F_2(3\ySetDim^2) = F_3(3\ySetDim^2)\) and \begin{align*} \frac{d}{dX}F_2(X) = \ySetDim < \frac{d}{dX}F_3(X) = \sqrt{\frac{X}{3}}. \end{align*} Therefore, \(Y^\ast(X) = \sqrt{X/3}\).
    \end{itemize}

Finally, we define the function
\[
    \Phi(X) := \max_{Y \in [-\ySetDim, \ySetDim]} \hat{f}(X, Y)
    = \hat{f}(X, Y^\ast(X)).
\]
From the above analysis,
\[
    \Phi(X) =
    \begin{cases}
        \ySetDim^3 - \ySetDim X, & X \in [-c\xSetDim,\, 0.75 \ySetDim^2], \\[0.5ex]
        \tfrac{2}{3\sqrt{3}} X^{3/2}, & X \in \,]0.75 \ySetDim^2,\, c\xSetDim].
    \end{cases}
\]
We note that \(\Phi\) is decreasing on \([-c\xSetDim,\, 0.75\ySetDim^2]\) and increasing on \([0.75\ySetDim^2,\, c\xSetDim]\). Hence its global minimum is attained at \(X = 0.75\ySetDim^2\). This completes the proof.

\section{Additional Details on Numerical Experiments}\label{app:NumericalExtras}
{In this section, we provide the implementation details for the numerical experiments reported in
Section~\ref{sec: Numerics}. We first recall the AGP algorithm from
\cite{xu2023unified}, of which GDA is a special case. We then specify the
parameter choices used for AGP and GDA in each class of experiments.

The AGP algorithm is a gradient-based method proposed in \cite{xu2023unified}
for solving
\[
    \min_{\mbf{x}\in X}\max_{\mbf{y}\in Y} f(\mbf{x},\mbf{y}).
\]
For every \(t\in[1:T]\), the AGP iterates are given by
\begin{align*}
\mbf{x}_{t+1}
&=
\Pi_X\!\left(
\mbf{x}_t
-
\frac{\nabla_{\mbf{x}} f(\mbf{x}_t,\mbf{y}_t)}{\beta(t+1)}
-
\frac{b(t+1)}{\beta(t+1)}\,\mbf{x}_t
\right), \\
\mbf{y}_{t+1}
&=
\Pi_Y\!\left(
\mbf{y}_t
+
\frac{\nabla_{\mbf{y}} f(\mbf{x}_t,\mbf{y}_t)}{\gamma(t+1)}
-
\frac{c(t+1)}{\gamma(t+1)}\,\mbf{y}_t
\right),
\end{align*}
where \(\Pi_X\) and \(\Pi_Y\) denote the Euclidean projection operators onto
\(X\) and \(Y\), respectively. The functions
\(b(\cdot)\), \(c(\cdot)\), \(\beta(\cdot)\), and \(\gamma(\cdot)\) denote
the algorithmic schedules. GDA is obtained as the special case of AGP with
\(b(\cdot)=0\) and \(c(\cdot)=0\).

Following \cite{xu2023unified}, we choose the step-size schedules based on some hyper-parameter tuning and
the Lipschitz constants of the partial gradients of \(f\). We now describe
the exact schedules used for AGP and GDA in each experiment.

\subsection{Parameters of AGP and GDA for the Hand-Crafted Example in Section~\ref{subsec:hand_crafted_example}}
\label{ref:app_AGP_GDA_parameters_TE2}

For the AGP implementation, for every \(t\in[1:T]\), we use
\begin{align*}
b(t)
&=
\frac{0.95}{\bar\eta}\,t^{-1/4}, \\
c(t)
&=
0, \\
\beta(t)
&=
\frac{1}{\bar\eta}, \\
\gamma(t)
&=
\bar\nu+\bar\gamma(t), \\
\bar\gamma(t)
&=
\bar\eta d_y
+
\frac{16 d_y}{\bar\eta\,b(t+1)^2}
\max\!\left(
\frac{0.1128}{\bar\eta}
\cdot
\frac{6d_y^{3/2}+\bar\nu-\bar\eta d_y}{d_y},
\,2
\right)
-
2\bar\nu,
\end{align*}
where we set \(\bar\eta=10\) and \(\bar\nu=0.1\).

For the GDA implementation, we use
\begin{align*}
\beta(t)
&=
\frac{1}{0.9}(t+1)^{0.6}, \\
\gamma(t)
&=
\frac{1}{0.6}(t+1)^{0.9}.
\end{align*}

\subsection{Parameters of AGP and GDA for the Benchmark Problems in Section~\ref{ssec:BenchmarkSIPAMPL}}

For the GDA implementation, we use the schedules
\[
\beta(t)=600\,(t+1)^{0.9},
\qquad
\gamma(t)=900\,(t+1)^{0.6},
\]
for all benchmark problems considered in Section~\ref{ssec:BenchmarkSIPAMPL}.

For the AGP implementation, we use
\begin{align*}
b(t)
&=
\frac{0.95}{\bar\eta}\,t^{-1/4}, \\
c(t)
&=
0, \\
\beta(t)
&=
\frac{1}{\bar\eta}, \\
\gamma(t)
&=
\bar\eta L_{21}^2
+
\frac{16\tau L_{21}^2}{\bar\eta\,b(t+1)^2}
-
\bar\nu,
\end{align*}
where
\[
\tau
=
\max\!\left(
\frac{0.1128}{\bar\eta}
\cdot
\frac{6L_{21}^3+\bar\nu-\bar\eta L_{21}^2}{L_{21}^2},
\,2
\right).
\]
Here, \(\bar\eta\) and \(\bar\nu\) are problem-dependent hyperparameters.
For every \(\mbf{x}\in X\) and \(\mbf{y}\in Y\), let \(L_{11}\) denote the
Lipschitz constant of \(\nabla_{\mbf{x}} f(\cdot,\mbf{y})\), \(L_{21}\) denote
the Lipschitz constant of \(\nabla_{\mbf{x}} f(\mbf{x},\cdot)\), \(L_{22}\)
denote the Lipschitz constant of \(\nabla_{\mbf{y}} f(\mbf{x},\cdot)\), and
\(L_{12}\) denote the Lipschitz constant of
\(\nabla_{\mbf{y}} f(\cdot,\mbf{y})\).

The hyperparameters used for each benchmark problem are reported in
Table~\ref{tab:agp_gda_hyperparameters}.

\begin{table}[h]
\centering
\begin{tabular}{@{}lccc@{}}
\toprule
Quantity
& \texttt{hettich4}~\cite{hettich_1979}
& \texttt{hettich5}~\cite{hettich_1979}
& \texttt{leon10}~\cite{LEON200078} \\
\midrule
\(T\)
& \(300000\)
& \(10000\)
& \(10000\) 
\\
\(\bar\eta\)
& \(0.1\)
& \(0.1\)
& \(0.01\) \\
\(\bar\nu\)
& \(0.1\)
& \(0.1\)
& \(0.01\) \\
\bottomrule
\end{tabular}
\caption{Hyperparameters used for AGP on the benchmark problems in Section~\ref{ssec:BenchmarkSIPAMPL}.}
\label{tab:agp_gda_hyperparameters}
\end{table}

\subsection{Parameters of AGP and GDA for Security Strategy Computation in Section~\ref{ssec:MultiplayerGames}}
\label{subsec:security_strategy_parameters}

For the security strategy computation experiments in
Section~\ref{ssec:MultiplayerGames}, we use the same AGP parameters as those
reported in Section~\ref{ref:app_AGP_GDA_parameters_TE2}. For GDA, we use a
different set of schedules selected through a hyperparameter sweep:
\begin{align*}
\beta(t)
&=
\frac{1}{0.6}(t+1)^{0.98}, \\
\gamma(t)
&=
\frac{1}{0.9}(t+1)^{0.56}.
\end{align*}}

\bibliographystyle{siamplain}
\bibliography{references}

\end{document}